\documentclass[sn-mathphys,Numbered]{sn-jnl}
\pdfoutput=1
\usepackage{pgfplots}
\usepackage{graphicx,epstopdf}%
\usepackage{multirow}%
\usepackage{amsmath,amssymb,amsfonts}%
\usepackage{amsthm}%
\usepackage{mathrsfs}%
\usepackage[title]{appendix}%
\usepackage{xcolor}%
\usepackage{textcomp}%
\usepackage{manyfoot}%
\usepackage{booktabs}%
\usepackage{algpseudocode}%
\usepackage{listings}%
\usepackage{ragged2e}
\usepackage{fullpage}
\usepackage{changepage}
\usepackage{stfloats}
\usepackage[caption=false]{subfig}
\usepackage{extarrows}
\usepackage{cleveref}
\usepackage{tikz}
\usepackage{pifont}
\usepackage{enumitem}
\usepackage[ruled, vlined, noline]{algorithm2e}
\usepackage{float}
\usepackage{graphicx}
\usepackage{subcaption}
\usepackage{booktabs,tabularx,array}
\usepackage{bm}

\theoremstyle{thmstyleone}%
\newtheorem{theorem}{Theorem}[section]%  meant for continuous numbers
\newtheorem{proposition}[theorem]{Proposition}% 
\theoremstyle{thmstyletwo}%
\newtheorem{remark}{Remark}%
\newtheorem{corollary}{Corollary}[section]%
\theoremstyle{thmstylethree}%
\newtheorem{lemma}{Lemma}[section]
\begin{document}
	\title[Article Title]{Fast Rates and Strong Convergence of Tikhonov-Regularized Mixed-Order Primal--Dual Dynamics for Linearly Constrained Optimization Without Eventual Ball
		Conditions}
	
	\author[1]{\fnm{Hong-lu} \sur{Li}}\email{lihonglu$\_$7988@163.com}
	
	\author*[1]{\fnm{Yi-bin} \sur{Xiao}}\email{xiaoyb9999@hotmail.com}

	\affil[1]{\orgdiv{School of Mathematical Sciences}, \orgname{University of Electronic Science and Technology of China}, \orgaddress{\street{}\city{Chengdu}, \postcode{611731}, \state{Sichuan}, \country{P.R. China}}}

  \maketitle

\begin{abstract}

	    \textbf{Abstract.}
		In this paper, we study a Tikhonov-regularized mixed-order primal--dual dynamical system with implicit Hessian damping for linearly constrained convex optimization problems in finite-dimensional Euclidean spaces, where the primal equation is second order and incorporates the viscous damping term \(\delta\sqrt{\varepsilon(t)}\,\dot x(t)\), whereas the multiplier equation remains first order. By constructing a new class of energy functions, for a
		general Tikhonov regularization coefficient \(\varepsilon(t)\), we prove
		the strong convergence of the primal trajectory and derive fast convergence
		rates under the same parameter assumptions, without imposing
		any eventual inside/outside-ball condition. More precisely, the primal trajectory converges to the
		minimum-norm solution, and the multiplier converges to a compatible KKT
		multiplier, while the convergence rates of the Lagrangian gap, feasibility
		violation, and objective residual are \(o(\varepsilon(t))\), and the
		convergence rate of the velocity norm is \(o(\sqrt{\varepsilon(t)})\).	For the critical case \(\varepsilon(t)=c/t^2\), in which the damping coefficient
		\(\delta\sqrt{\varepsilon(t)}\) reduces to \(\delta\sqrt{c}/t\), we establish
		the sharper convergence rates \(o(t^{-2})\) for the Lagrangian gap,
		feasibility violation, and objective residual, together with \(o(t^{-1})\)
		for the velocity norm, which improve the corresponding \(O(t^{-2})\) and
		\(O(t^{-1})\) decay estimates obtained in the related literature. Most importantly, when the proposed dynamical system is specialized to the finite-dimensional unconstrained setting, our analysis answers the open question on strong convergence in this critical regime posed by Attouch and L\'aszl\'o [Math. Methods Oper. Res., 99 (2024), pp.~307--347].
		
		\noindent\textbf{Keywords.} Linearly constrained convex optimization, implicit Hessian damping, strong convergence, minimum-norm solution, \(o(t^{-2})\) convergence rates
\end{abstract}

	\section{Introduction}\label{sec1}
Let \(f:\mathbb{R}^n\to\mathbb{R}\) be convex and continuously
differentiable with an \(L\)-Lipschitz continuous gradient. Given
\(A\in\mathbb{R}^{m\times n}\) and \(b\in\mathbb{R}^m\), consider
\begin{equation}\label{problem}
	\min_{x\in\mathbb{R}^n} f(x)
	\qquad\text{subject to}\qquad Ax=b.
\end{equation}
This model arises in consensus and distributed optimization, resource
allocation, network flows, and signal and image reconstruction; see
\cite{Boyd ML 2011,Candes SPM 2008,Feijer Automatica 2010,
	Zhang SIIMS 2010,Zeng TAC 2023}. The unconstrained case corresponds
to \(A=0\) and \(b=0\). Define $\mathcal F:=\{x\in\mathbb{R}^n:Ax=b\},
S:=\operatorname*{argmin}_{x\in\mathcal F}f(x),$
and, for \(\rho\ge0\),
\[
\mathcal L_\rho(x,\lambda)
:=
f(x)+\langle\lambda,Ax-b\rangle
+\frac{\rho}{2}\Vert Ax-b\Vert^2.
\]
The primal--dual solution set is $\Omega:=
\left\{
(x,\lambda)\in\mathbb{R}^n\times\mathbb{R}^m:
Ax=b,\ \nabla f(x)+A^\top\lambda=0
\right\}.$
We assume throughout that \(\Omega\neq\emptyset\), equivalently
\(S\neq\emptyset\) in the present finite-dimensional affine setting.
Moreover, \((x,\lambda)\in\Omega\) if and only if
\[
\mathcal L_\rho(x,\mu)
\le\mathcal L_\rho(x,\lambda)
\le\mathcal L_\rho(y,\lambda)
\qquad
\text{for all }(y,\mu)\in
\mathbb{R}^n\times\mathbb{R}^m.
\]
Since \(S\) is nonempty, closed, and convex, its unique minimum-norm
element is $x^*:=P_S(0)
=\operatorname*{argmin}_{x\in S}\Vert x\Vert.$

\subsection{Inertial gradient dynamics with vanishing damping}

The continuous-time viewpoint provides a useful framework for
understanding the acceleration mechanisms behind first-order
optimization methods. A fundamental model is the inertial gradient
system with asymptotically vanishing damping
\[
\ddot{x}(t)+\frac{\alpha}{t}\dot{x}(t)+\nabla f(x(t))=0.
\]
Su, Boyd, and Cand\`es \cite{Su JMLR 2016} showed that, for
\(\alpha=3\), this dynamical system arises as a continuous-time limit
of Nesterov's accelerated gradient method and satisfies $f(x(t))-\min f=O(t^{-2}).$
For \(\alpha>3\), subsequent studies established weak convergence of
the trajectory together with the improved estimate $f(x(t))-\min f=o(t^{-2});$
see \cite{Attouch MP 2018,May Turkish 2017}.

The vanishing-damping principle has also been extended to linearly
constrained convex optimization. For instance, Zeng et al.
\cite{Zeng TAC  2023} studied the Nesterov-type primal--dual dynamical system
\[
\left\{
\begin{aligned}
	\ddot{x}(t)+\frac{\alpha}{t}\dot{x}(t)
	&=
	-\nabla f(x(t))
	-A^\top\bigl(\lambda(t)+\theta t\dot{\lambda}(t)\bigr)
	-A^\top(Ax(t)-b),\\
	\ddot{\lambda}(t)+\frac{\alpha}{t}\dot{\lambda}(t)
	&=
	A\bigl(x(t)+\theta t\dot{x}(t)\bigr)-b.
\end{aligned}
\right.
\]
This and related second-order primal--dual systems yield
\(O(t^{-2})\) estimates for the primal--dual gap, objective residual,
or feasibility violation under suitable parameter conditions; see
\cite{He SICON 2021,Bot JDE 2021,Attouch JOTA 2022,
	He Applied Analysis 2023}.
More recently, He, Huang, Xiao, and Fang
\cite{He arXiv 2026} improved the objective residual and feasibility
violation estimates to \(o(t^{-2})\) and established convergence to a
KKT point for an unregularized primal--dual dynamical system. However, the above primal--dual dynamical systems are second order in both the
primal and dual variables, which makes them more involved from the viewpoint
of numerical computation. Therefore, this computational complexity has motivated the study of mixed-order primal–dual dynamics, in which the inertial term is introduced only into the
primal equation. Along this line, He, Hu, and Fang
\cite{He Automatica 2022} proposed a Nesterov-type mixed-order primal--dual
dynamical system and showed that, even with a first-order dual equation,
the \(O(t^{-2})\)-type convergence rates for the objective residual and
feasibility violation can still be achieved. More recently, Li, Hu, He,
and Xiao \cite{Li JOTA 2025} developed a general mixed-order framework with
time-dependent parameters and derived fast convergence estimates of order
\(O(1/(t^2\beta(t)))\). A natural question is therefore whether one can develop a mixed-order primal–dual dynamical system for linearly constrained convex optimization that achieves \(o(t^{-2})\) convergence rates for both the objective residual and the feasibility violation.

\subsection{Tikhonov regularization and strong convergence}

Fast decay of the objective residual \(f(x(t))-\min f\) does not by
itself determine which solution is selected when the solution set is
not a singleton. A standard device for enforcing a canonical limit is
Tikhonov regularization, which consists of adding a vanishing term
\(\varepsilon(t)x(t)\) to the dynamics. For the inertial dynamical system
\[
\ddot{x}(t)+\frac{\alpha}{t}\dot{x}(t)
+\nabla f(x(t))+\varepsilon(t)x(t)=0,
\]
under suitable conditions on \(\varepsilon\), the regularization term
can enforce strong convergence to the minimum-norm solution. The
analysis initiated in \cite{Attouch JMAA 2018} revealed a basic
rate--selection trade-off: a rapidly vanishing coefficient preserves
accelerated function-value estimates, whereas a more slowly vanishing
coefficient supplies enough regularization to obtain strong
convergence. Related phenomena have subsequently been observed for other
inertial dynamical systems; see \cite{Bot MP 2021,Alecsa SIAM JO 2021,Attouch JDE 2022,
	Attouch AMO 2023,Attouch MMOR 2024}. In particular, the assumptions used to derive fast rates
and those used to establish minimum-norm strong convergence frequently
correspond to different decay regimes of \(\varepsilon(t)\).
More recently, Attouch and L\'aszl\'o
\cite{Attouch MMOR 2024} considered the Tikhonov-regularized inertial
system,
in which the viscous damping is directly coupled with the Tikhonov
coefficient through \(\delta\sqrt{\varepsilon(t)}\).
For the choice
\(\varepsilon(t)=c/t^2\), the damping coefficient becomes
\(\delta\sqrt{c}/t\), corresponding to the critical Nesterov-type
regime. They established accelerated function-value and velocity
estimates in this regime, while the strong convergence of the whole
trajectory to the minimum-norm solution remained open, even in finite
dimensions, without an eventual inside/outside-ball condition.

Tikhonov regularization has also been introduced into mixed-order
primal--dual systems for linearly constrained optimization. A
representative model is
\[
\left\{
\begin{aligned}
	&\ddot{x}(t)+\gamma(t)\dot{x}(t)
	+\eta(t)\bigl(
	\nabla f(x(t))+A^\top\lambda(t)
	+\rho A^\top(Ax(t)-b)+\varepsilon(t)x(t)
	\bigr)=0,\\
	&\dot{\lambda}(t)
	=t\eta(t)
	\bigl[A(x(t)+\theta t\dot{x}(t))-b\bigr],
\end{aligned}
\right.
\]
where \(\gamma(t)\) and \(\eta(t)\) denote damping and time-scaling
coefficients, respectively. Fast decay estimates and minimum-norm
selection properties have been studied for this and related models in
\cite{Zhu Optimization 2026,Zhu JCAM 2025,Sun JOTA 2025,
	Li JOTA 2025,Li CNSNS 2026}. Nevertheless, the parameter regimes ensuring fast
convergence rates and those ensuring minimum-norm strong convergence
are often disjoint. In several analyses, convergence of the whole trajectory is obtained only under the additional assumption that the trajectory eventually remains either inside or outside the ball centered at the origin with radius equal to the norm of the minimum-norm solution. 
Although strong-convergence results without such an eventual ball
condition have recently been obtained for constant damping or damping
of the form \(\alpha/t^q\) with \(0<q<1\) (see \cite{Battahi ASVAO 2025,Csetnek JEE 2026}), they do not
cover the critical \(1/t\) damping regime arising in
\cite{Attouch MMOR 2024} from the choice
\(\varepsilon(t)=c/t^2\).
Consequently, it remains challenging to establish  little-\(o\) convergence
rates and minimum-norm strong convergence under the same parameter
assumptions, without imposing an eventual inside/outside-ball
condition, for Tikhonov-regularized mixed-order primal--dual dynamics.

\subsection{Explicit and implicit Hessian-driven damping}

Although asymptotically vanishing damping is central to acceleration,
its weak dissipation may produce pronounced oscillations, especially
for ill-conditioned objectives. To attenuate these oscillations, Attouch, Peypouquet, and Redont
\cite{Attouch JDE 2016} investigated the Nesterov-type inertial
dynamical system with Hessian-driven damping
\[
\ddot{x}(t)+\frac{\alpha}{t}\dot{x}(t)
+\beta\nabla^2 f(x(t))\dot{x}(t)+\nabla f(x(t))=0.
\]
The term \(\nabla^2 f(x(t))\dot{x}(t)\) provides
curvature-dependent dissipation and helps attenuate oscillations.
For this dynamical system, accelerated function-value estimates and
rapid gradient decay were established. Hessian-driven damping was later incorporated into primal--dual
dynamics for linearly constrained convex optimization; see
\cite{He Optimization 2025,Niederlander SICON 2021,Sun CNSNS 2026}.
Such direct formulations explicitly involve \(\nabla^2 f\), requiring
second-order smoothness and Hessian--vector products. An alternative
is to encode the Hessian effect implicitly by evaluating the gradient
at the extrapolated point \(x(t)+\beta(t)\dot{x}(t)\). When \(f\) is
twice differentiable, the formal first-order expansion $\nabla f\bigl(x(t)+\beta(t)\dot{x}(t)\bigr)
\approx
\nabla f(x(t))
+\beta(t)\nabla^2f(x(t))\dot{x}(t)$
explains this interpretation.The resulting dynamics therefore use only gradient evaluations and
require neither Hessian evaluations nor twice differentiability of
\(f\). This implicit approach has been studied for unconstrained
inertial dynamics in
\cite{Alecsa SIAM JO 2021,Laszlo COAP 2025}.
Building on these developments, Li, He, and Xiao
\cite{Li CNSNS 2026} extended implicit Hessian damping to linearly
constrained convex optimization. The dynamical system developed in
\cite{Li CNSNS 2026} is second order in both the primal and dual
variables, and its dual equation contains several extrapolation and
correction terms. More importantly, the analysis in
\cite{Li CNSNS 2026} requires different parameter regimes for fast
convergence rates and for strong convergence to the minimum-norm
solution. This naturally
raises the question of whether it is possible to construct a
primal--dual dynamical system with implicit Hessian damping and a
simpler dual equation, for which fast convergence rates
and minimum-norm strong convergence can be established under the same
set of assumptions.

The preceding discussion raises the following questions for primal--dual dynamics associated with
linearly constrained convex optimization:
\begin{center}
	\setlength{\fboxsep}{8pt}
	\fbox{
		\begin{minipage}{0.92\textwidth}
			\begin{enumerate}[label=\textup{(Q\arabic*)},
				leftmargin=3.2em,nosep]
				\item Can the convergence rates be improved from
				\(O(\varepsilon(t))\) and \(O(\sqrt{\varepsilon(t)})\) to
				\(o(\varepsilon(t))\) and \(o(\sqrt{\varepsilon(t)})\), respectively,
				while ensuring that these improved rates and the strong convergence of
				the primal trajectory to the minimum-norm solution hold under the same
				parameter assumptions?
				
				\item Can convergence of the whole primal trajectory be proved
				without an eventual inside/outside-ball condition, including for
				the critical Tikhonov regularization coefficient \(\varepsilon(t)=c/t^2\)?
				
				\item Can trajectory oscillations be effectively suppressed without
				requiring either twice differentiability of the objective or Hessian
				evaluations?
			\end{enumerate}
		\end{minipage}
	}
\end{center}

Motivated by these questions, we adopt the
\(\sqrt{\varepsilon(t)}\)-damping mechanism of
\cite{Attouch MMOR 2024}, evaluate both the objective gradient and the
quadratic penalty at \(x(t)+\beta(t)\dot{x}(t)\), and retain a first-order
dual equation. This leads to the following mixed-order primal--dual
dynamical system:
\begin{equation}\label{original-system}
	\left\{
	\begin{aligned}
		\ddot x(t)
		&+\delta\sqrt{\varepsilon(t)}\,\dot x(t)
		+\nabla f\bigl(x(t)+\beta(t)\dot x(t)\bigr)
		+A^\top\lambda(t)\\
		&\qquad
		+\rho A^\top\left(
		A\bigl(x(t)+\beta(t)\dot x(t)\bigr)-b
		\right)
		+\varepsilon(t)x(t)=0,\\[1mm]
		\dot\lambda(t)
		&=
		\frac{\gamma}{\sqrt{\varepsilon(t)}}
		\left[
		A\left(
		x(t)+\frac{\gamma}{\sigma\sqrt{\varepsilon(t)}}\dot x(t)
		\right)-b
		\right].
	\end{aligned}
	\right.
\end{equation}
Here \(\delta,\rho,\sigma,\gamma>0\),
\(\varepsilon\in C^2([t_0,\infty))\) is positive and nonincreasing with
\(\varepsilon(t)\to0\); and
\(\beta:[t_0,\infty)\to[0,\infty)\) is continuous and uniformly bounded:
there exists \(\overline{\beta}\ge0\) such that $0\le\beta(t)\le\overline{\beta}$ for all $t\ge t_0$.
When \(\varepsilon(t)=c/t^2\), the damping coefficient becomes
\(\delta\sqrt c/t\), so \eqref{original-system} includes the critical
Nesterov-type regime.

\subsection{Main contributions}

The main contributions of this paper are summarized as follows.
\vskip 5pt

\begin{enumerate}[label=\textup{(\roman*)},leftmargin=1.8em]	
	\item \emph{\textbf{A unified mixed-order primal--dual framework with
			Tikhonov regularization and implicit Hessian damping.}}
	We extend the unconstrained TRIGS dynamics of Attouch and L\'aszl\'o
	\cite{Attouch MMOR 2024} to linearly constrained convex optimization
	through a mixed-order primal--dual dynamical system incorporating both
	Tikhonov regularization and implicit Hessian damping. Evaluating the
	objective gradient and the quadratic penalty at
	\(x(t)+\beta(t)\dot{x}(t)\) introduces the implicit Hessian effect.
	Unlike explicit Hessian-driven systems
	\cite{Attouch JDE 2016,Bot MP 2021,Niederlander SICON 2021,
		He Optimization 2025,Sun CNSNS 2026}, the proposed dynamical system \eqref{original-system} requires
	neither Hessian evaluations nor twice differentiability of the objective.
	Moreover, compared with the implicit-Hessian primal--dual dynamical
	system in \cite{Li CNSNS 2026}, whose dual equation contains several
	extrapolation and correction terms, \eqref{original-system} retains a
	simpler first-order dual equation and is more amenable to discretization.
	The proposed framework therefore enables the strong convergence to the minimum-norm solution
	while effectively attenuating trajectory oscillations; see
	Figure~\ref{implicit-hessian-figure}.
	
	\item \emph{\textbf{Minimum-norm strong convergence without eventual
			ball conditions, including the critical Nesterov-type regime.}}
Several existing analyses establish only  $\liminf_{t\to\infty}\|x(t)-x^*\|=0,$ and prove convergence of the whole trajectory only under the additional assumption that, for all sufficiently large \(t\), either
	\(\Vert x(t)\Vert<\Vert x^*\Vert\) or
	\(\Vert x(t)\Vert\ge\Vert x^*\Vert\); see
	\cite{Bot MP 2021,Alecsa SIAM JO 2021,Attouch MMOR 2024,
		Zhu JCAM 2025,Li JOTA 2025}.
However, for \eqref{original-system}, we prove
	\(\Vert x(t)-x^*\Vert\to0\) for a general Tikhonov coefficient without
	this eventual inside/outside-ball assumption.
	In particular, when \(\varepsilon(t)=c/t^2\), the damping coefficient
	becomes \(\delta\sqrt{c}/t\), corresponding to the critical
	Nesterov-type regime. In the finite-dimensional unconstrained case,
	our result answers the open question raised by Attouch and L\'aszl\'o
	\cite{Attouch MMOR 2024} of whether the strong convergence of the whole
	trajectory can be established in this critical regime, while the general
	result extends this conclusion to linearly constrained optimization.
	Previous strong-convergence results without the ball condition
	\cite{Battahi ASVAO 2025,Csetnek JEE 2026} apply to constant damping or
	\(\alpha/t^q\) with \(q<1\), but not to the critical damping of order
	\(1/t\).
	
	\item \emph{\textbf{Improved little-\(o\) rates and strong convergence
			under the same parameter conditions.}}
	In many Tikhonov-regularized inertial dynamical systems, fast rates and the
	strong convergence to the minimum-norm solution require different decay regimes of the
	regularization coefficient; see
	\cite{Attouch JMAA 2018,Alecsa SIAM JO 2021,Zhu Optimization 2026,
		Zhu JCAM 2025,Li JOTA 2025}.
	For \eqref{original-system}, the same parameter assumptions guarantee
	both the strong convergence to the minimum-norm solution and the improved convergence rates
	\(o(\varepsilon(t))\) for the Lagrangian gap, objective residual, and
	feasibility violation, together with
	\(o(\sqrt{\varepsilon(t)})\) for the velocity norm.
	In particular, for \(\varepsilon(t)=c/t^2\), these estimates become
	\(o(t^{-2})\) for the Lagrangian gap, objective residual, and feasibility
	violation and \(o(t^{-1})\) for the velocity norm, improving the
	corresponding \(O(t^{-2})\) and \(O(t^{-1})\) decay estimates.
\end{enumerate}

\vskip 5pt

To illustrate the oscillation-attenuation effect of implicit Hessian damping, we consider a family of linearly equality-constrained convex quadratic problems with \(\kappa\in\{100,400\}\):
\[
\min_{x\in\mathbb R^3}
f_\kappa(x):=
\frac12\left(x_1^2+\kappa x_2^2\right)
\qquad\text{subject to}\qquad
x_1+x_2+x_3=0.
\]
The choices \(\kappa=100\) and \(\kappa=400\) represent two
different curvature levels, with the latter producing a more
pronounced high-curvature direction. The dynamical system~\eqref{original-system} was numerically integrated over
\([1,30]\) using the MATLAB solver \texttt{ode15s}, with
\(\varepsilon(t)=t^{-2}\), \(\delta=5\), \(\rho=30\),
\(\gamma=1\), and \(\sigma=5/2\). The initial conditions were
\(x(1)=(1,1,0)^\top\), \(\dot{x}(1)=0\), and \(\lambda(1)=0\).
In both cases, the choices \(\beta=0\) and \(\beta=0.05\) are compared.
\begin{figure}[!h]
	\centering
	\includegraphics[width=0.94\textwidth]
	{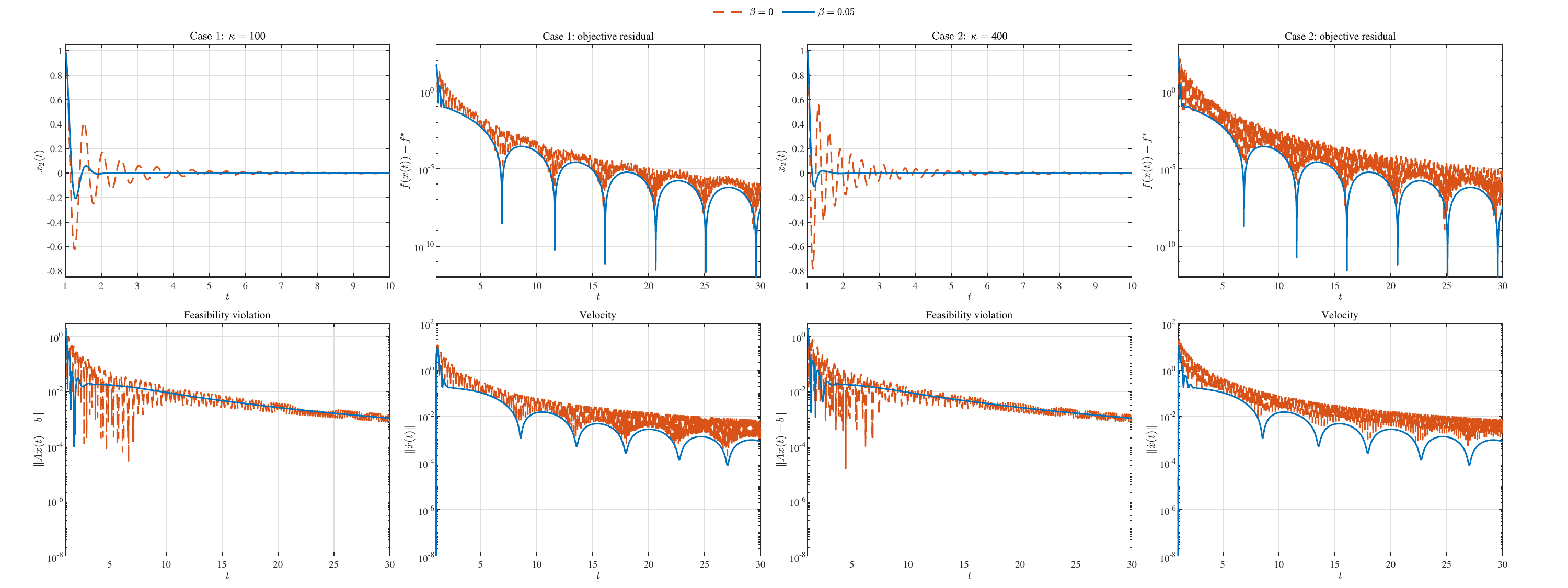}
	\caption{Effect of implicit Hessian damping for two curvature levels.
		The left two columns correspond to Case~1 with \(\kappa=100\), while
		the right two columns correspond to Case~2 with \(\kappa=400\).
		Within each case, the upper-left, upper-right, lower-left, and
		lower-right panels show \(x_2(t)\), the objective residual, the
		feasibility violation, and the velocity, respectively.
		In both cases, the choice \(\beta=0.05\) markedly attenuates the
		oscillations observed for \(\beta=0\) and produces substantially
		smoother decay, illustrating the curvature-adaptive damping effect
		of the implicit Hessian term.}
	\label{implicit-hessian-figure}
\end{figure}

\subsection{Organization}

The rest of the paper is organized as follows. Section~\ref{well-posedness} introduces the
proposed dynamical system, establishes its well-posedness, and proves its
exact equivalence with the time-rescaled system. Section~\ref{fast convergence} develops the Lyapunov analysis and derives fast convergence rates of order \(O(\varepsilon(t))\). Section~\ref{strong convergence} proves strong convergence to the minimum-norm primal solution without an eventual ball condition and improves the preceding estimates to the corresponding little-\(o\) rates. Section~\ref{particular case} specializes the general results to specific choices of the Tikhonov regularization coefficient, with particular emphasis on the critical choice \(\varepsilon(t)=c/t^2\). Finally, Section~\ref{conclusion-section} concludes the paper.

	\section{Well-posedness and exact equivalence under the nonlinear time change}\label{well-posedness}
	In this section, we first establish the existence and uniqueness of a
	global solution to the dynamical system \eqref{original-system}. We then
	introduce a nonlinear change of time and prove the exact equivalence
	between the original system and its time-rescaled formulation, whose
	structure is more suitable for the Lyapunov analysis developed in the
	subsequent sections.
	
\begin{proposition}[Existence and uniqueness of a global solution]
	\label{well-posedness-proposition}
	Let \(t_0>0\). Assume that $f:\mathbb R^n\to\mathbb R$ is convex and continuously differentiable and that \(\nabla f\) is
	globally \(L\)-Lipschitz continuous for some \(L>0\). Assume further
	that $\varepsilon\in C([t_0,+\infty);(0,+\infty))$ and $\beta\in C([t_0,+\infty);[0,+\infty))$. Let $\gamma>0, \delta>0, \rho>0, \sigma>0.$
	Then, for every initial condition $	x(t_0)=x_0, \dot x(t_0)=v_0, \lambda(t_0)=\lambda_0,$ the dynamical system \eqref{original-system} admits a unique global classical solution satisfying $x\in C^2([t_0,+\infty);\mathbb R^n), \lambda\in C^1([t_0,+\infty);\mathbb R^m).$
\end{proposition}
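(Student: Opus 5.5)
The plan is to rewrite \eqref{original-system} as a first-order system and apply the Cauchy--Lipschitz theorem in its global form, with linear growth ruling out blow-up. Set \(Z(t)=(x(t),v(t),\lambda(t))\in\mathbb R^n\times\mathbb R^n\times\mathbb R^m\), where \(v=\dot x\). The system becomes \(\dot Z(t)=F(t,Z(t))\), where \(F=(F_1,F_2,F_3)\) is given by
\begin{align*}
F_1(t,x,v,\lambda)&=v,\\
F_2(t,x,v,\lambda)&=-\delta\sqrt{\varepsilon(t)}\,v-\nabla f(x+\beta(t)v)-A^\top\lambda-\rho A^\top\bigl(A(x+\beta(t)v)-b\bigr)-\varepsilon(t)x,\\
F_3(t,x,v,\lambda)&=\frac{\gamma}{\sqrt{\varepsilon(t)}}\Bigl[A\Bigl(x+\frac{\gamma}{\sigma\sqrt{\varepsilon(t)}}v\Bigr)-b\Bigr].
\end{align*}
Since \(\varepsilon\) is continuous and strictly positive, the functions \(\sqrt{\varepsilon}\), \(1/\sqrt{\varepsilon}\) and \(1/\varepsilon\) are continuous on \([t_0,\infty)\). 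Together with the continuity of \(\beta\) and \(\nabla f\), this shows that \(F\) is continuous on \([t_0,\infty)\times\mathbb R^{2n+m}\).

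The key step is a Lipschitz estimate in \(Z\) with a coefficient depending continuously on \(t\). For \(Z,\bar Z\), the \(L\)-Lipschitz continuity of \(\nabla f\) gives
\[
\|\nabla f(x+\beta v)-\nabla f(\bar x+\beta\bar v)\|\le L\bigl(\|x-\bar x\|+\beta(t)\|v-\bar v\|\bigr).
\]
The remaining terms in \(F\) are affine in \(Z\), so one obtains
\[
\|F(t,Z)-F(t,\bar Z)\|\le K(t)\|Z-\bar Z\|,
\]
where \(K(t)\) is a combination of \(1\), \(\sqrt{\varepsilon(t)}\), \(\varepsilon(t)\), \(\beta(t)\), \(L\), \(\rho\|A\|^2\), \(\|A\|\), \(\gamma/\sqrt{\varepsilon(t)}\) and \(\gamma^2\|A\|/(\sigma\varepsilon(t))\). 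This \(K\) is continuous, hence bounded on every compact interval \([t_0,T]\). Similarly, \(\|F(t,0)\|\le \|\nabla f(0)\|+\rho\|A\|\|b\|+\gamma\|b\|/\sqrt{\varepsilon(t)}\), which is also locally bounded. So \(F\) is locally Lipschitz in \(Z\), uniformly for \(t\) in compact intervals, and has at most linear growth:
\[
\|F(t,Z)\|\le K(t)\|Z\|+\|F(t,0)\|.
\]

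By Cauchy--Lipschitz there is a unique maximal solution on \([t_0,T_{\max})\). If \(T_{\max}<\infty\), then on \([t_0,T_{\max})\) we have \(\|Z(t)\|\le\|Z_0\|+\int_{t_0}^t\bigl(\bar K\|Z(s)\|+\bar c\bigr)\,ds\), where \(\bar K\) and \(\bar c\) bound \(K\) and \(\|F(\cdot,0)\|\) on \([t_0,T_{\max}]\). Gronwall's lemma then bounds \(Z\) on \([t_0,T_{\max})\), which contradicts the blow-up alternative. Hence \(T_{\max}=\infty\). Uniqueness on \([t_0,\infty)\) follows from the global Lipschitz bound on each compact interval, again via Gronwall.

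For regularity, \(Z\in C^1\), so \(x\in C^1\) and \(\lambda\in C^1\). Moreover \(\dot x=v\in C^1\), so \(x\in C^2\), and the equations in \eqref{original-system} hold pointwise. The only mildly delicate point is that \(K(t)\) involves \(1/\varepsilon(t)\), which blows up as \(t\to\infty\). This is harmless because Gronwall is applied only on compact intervals, where \(\varepsilon\) is bounded below by a positive constant. No convexity is needed for this argument.
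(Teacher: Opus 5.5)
Your proposal is correct and follows essentially the same route as the paper. Both reduce the system to a first-order system and derive a Lipschitz/linear-growth bound on compact time intervals, where $\varepsilon$ is bounded away from zero. Both then apply Cauchy--Lipschitz to obtain a maximal solution and use Gr\"onwall to rule out $T_{\max}<\infty$. The only cosmetic difference is that you invoke the blow-up alternative directly, whereas the paper shows by hand that $Y$ has a limit at $T_{\max}$ and extends the solution past it.
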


\begin{proof}
	Set $v(t):=\dot x(t)$ and $Y(t):=(x(t),\lambda(t),v(t)).$
	Then \eqref{original-system} is equivalent to
	\[
	\dot Y(t)=\mathcal G(t,Y(t)),
	\]
	where
	\[
	\mathcal G(t,x,\lambda,v)
	:=
	\begin{pmatrix}
		v\\[1mm]
		\displaystyle
		\frac{\gamma}{\sqrt{\varepsilon(t)}}
		\left[
		A\left(
		x+\frac{\gamma}{\sigma\sqrt{\varepsilon(t)}}v
		\right)-b
		\right]\\[3mm]
		\displaystyle
		-\delta\sqrt{\varepsilon(t)}\,v
		-\nabla f\bigl(x+\beta(t)v\bigr)
		-A^\top\lambda
		-\rho A^\top
		\left[
		A\bigl(x+\beta(t)v\bigr)-b
		\right]
		-\varepsilon(t)x
	\end{pmatrix}.
	\]
	Fix an arbitrary finite \(T>t_0\). Since \(\varepsilon(\cdot)\) is continuous
	and strictly positive and \(\beta(\cdot)\) is continuous, the constants
	\[
	\varepsilon_T^-:=
	\min_{t\in[t_0,T]}\varepsilon(t)>0,
	\quad
	\varepsilon_T^+:=
	\max_{t\in[t_0,T]}\varepsilon(t)<+\infty\quad\text{and}\quad	\beta_T:=
	\max_{t\in[t_0,T]}\beta(t)<+\infty
	\]
	are well defined. Thus, by using the global Lipschitz continuity of \(\nabla f\), we obtain that for any $Y_1=(x_1,\lambda_1,v_1), Y_2=(x_2,\lambda_2,v_2)\in\mathbb{R}^n\times\mathbb{R}^m\times\mathbb{R}^n$,
	\[
	\begin{aligned}
		&\left\|
		\nabla f(x_1+\beta(t)v_1)
		-\nabla f(x_2+\beta(t)v_2)
		\right\|\\
		&\qquad\le
		L\|x_1-x_2\|
		+L\beta_T\|v_1-v_2\|.
	\end{aligned}
	\]
	The remaining terms are linear in \(x,\lambda,v\). Consequently,
	for every \(t\in[t_0,T]\),
	\[
	\begin{aligned}
		\|\mathcal G(t,Y_1)-\mathcal G(t,Y_2)\|
		\le&
		\left(
		\frac{\gamma \Vert A\Vert}{\sqrt{\varepsilon_T^-}}
		+L+\rho \Vert A\Vert^2+\varepsilon_T^+
		\right)\|x_1-x_2\|
		+\Vert A\Vert\|\lambda_1-\lambda_2\|\\
		&
		+\left(
		1+\frac{\gamma^2\Vert A\Vert}{\sigma\varepsilon_T^-}
		+\delta\sqrt{\varepsilon_T^+}
		+(L+\rho \Vert A\Vert^2)\beta_T
		\right)\|v_1-v_2\|.
	\end{aligned}
	\]
	Hence there exists \(K_T>0\) such that
	\[
	\|\mathcal G(t,Y_1)-\mathcal G(t,Y_2)\|
	\le K_T\|Y_1-Y_2\|,
	\qquad
	t\in[t_0,T].
	\]
	Thus \(\mathcal G\) is continuous in \(t\) and locally uniformly
	Lipschitz continuous in the state variable. The Cauchy--Lipschitz
	theorem therefore gives a unique maximal solution
	\[
	Y:[t_0,T_{\max})
	\longrightarrow
	\mathbb R^n\times\mathbb R^m\times\mathbb R^n.
	\]
	
	We now prove that \(T_{\max}=+\infty\). Suppose, to the contrary, that
	\(T_{\max}<+\infty\). Applying the preceding estimates on the compact
	interval \([t_0,T_{\max}]\), we obtain
	\[
	\|\mathcal G(t,Y)-\mathcal G(t,0)\|
	\le K_{T_{\max}}\|Y\|.
	\]
	Moreover,
	\[
	\mathcal G(t,0,0,0)
	=
	\begin{pmatrix}
		0\\[1mm]
		\displaystyle
		-\frac{\gamma}{\sqrt{\varepsilon(t)}}b\\[2mm]
		\displaystyle
		-\nabla f(0)+\rho A^\top b
	\end{pmatrix},
	\]
	which is bounded on \([t_0,T_{\max}]\). Therefore, there exists
	\(C>0\) such that
	\[
	\|\mathcal G(t,Y)\|
	\le C(1+\|Y\|),
	\qquad
	t\in[t_0,T_{\max}].
	\]
	Since
	\[
	Y(t)=Y(t_0)+\int_{t_0}^t\mathcal G(s,Y(s))\,ds,
	\]
	Grönwall's inequality yields
	\[
	1+\|Y(t)\|
	\le
	\bigl(1+\|Y(t_0)\|\bigr)e^{C(t-t_0)},
	\qquad
	t<T_{\max}.
	\]
	Thus \(Y(t)\) remains bounded as \(t\uparrow T_{\max}\). It follows that there exists \(R>0\) such that
	\[
	\|Y(t)\|\le R
	\qquad (t<T_{\max}).
	\]
	Since the set $[t_0,T_{\max}] \times \overline B(0,R)$ is compact and \(\mathcal G\) is continuous, it is bounded on this
	set. Hence there exists \(D>0\) such that
	\[
	\|\dot Y(t)\|
	=
	\|\mathcal G(t,Y(t))\|
	\le D
	\qquad(t<T_{\max}).
	\]
 It follows that \(Y(t)\) has a finite limit as
	\(t\uparrow T_{\max}\). Applying the local existence theorem at this
	limit extends the solution beyond \(T_{\max}\), contradicting its
	maximality. Therefore,
	\[
	T_{\max}=+\infty.
	\]
	Finally, \(\mathcal G\) is continuous, so \(Y=(x,\lambda,v)\in C^1\).
	Since $\dot x=v$ and \(v\in C^1\), we conclude that
	\[
	x\in C^2([t_0,+\infty);\mathbb R^n),
	\qquad
	\lambda\in C^1([t_0,+\infty);\mathbb R^m).
	\]
\end{proof}

Following the time-rescaling approach of Attouch, Bo\c{t}, Hulett, and Nguyen
\cite{Attouch SICON 2026}, we introduce the nonlinear change of time $s(t)=\int_{t_0}^{t}\sqrt{\varepsilon(u)}\,du,$
which leads to a more tractable dissipative structure for the subsequent
Lyapunov analysis. The following proposition establishes the exact
equivalence between the original system and its time-rescaled formulation.

	\begin{proposition}[Exact equivalence under the nonlinear time change]
		\label{time-change-lemma}
		 Suppose that the assumptions of Proposition \ref{well-posedness-proposition} hold. And assume further that $\varepsilon\in C^1([t_0,+\infty);(0,+\infty))$. Define
		\begin{equation}\label{time-change}
			s=s(t):=\int_{t_0}^t\sqrt{\varepsilon(u)}\,du,
			\qquad
			s_\infty:=\int_{t_0}^{\infty}\sqrt{\varepsilon(u)}\,du
			\in(0,\infty].
		\end{equation}
		Then \(s:[t_0,\infty)\to[0,s_\infty)\) is strictly increasing.
		Denote its inverse by \(t=t(s)\), and set
		\begin{equation}\label{B-p}
			a(s):=\frac{1}{\varepsilon(t(s))},
			\qquad
			p(s):=\frac{a'(s)}{a(s)}
			=-\frac{\dot\varepsilon(t(s))}
			{\varepsilon(t(s))^{3/2}}.
		\end{equation}
		Consider the transformed dynamical system
		\begin{equation}\label{transformed-system}
			\left\{
			\begin{aligned}
				X''(s)
				&+\left(\delta-\frac{p(s)}{2}\right)X'(s)
				+a(s)\nabla_x\mathcal{L}_{\rho}
				(\widehat X(s),\Lambda(s))
				+X(s)=0,\\
				\Lambda'(s)
				&=
				\gamma a(s)
				\left[
				A\left(X(s)+\frac{\gamma}{\sigma}X'(s)\right)-b
				\right],
			\end{aligned}
			\right.
		\end{equation}
		where
		\begin{equation}\label{implicit-point-s}
			\widehat X(s)
			=
			X(s)+\frac{\beta(t(s))}{\sqrt{a(s)}}X'(s).
		\end{equation}
		Then the following statements hold:
		\begin{enumerate}[label=\textup{(\roman*)},leftmargin=2.5em]
			\item
			If \((x,\lambda)\) is a solution of
			\eqref{original-system}, then
			\[
			X(s):=x(t(s)),
			\qquad
			\Lambda(s):=\lambda(t(s)),
			\qquad
			0\le s<s_\infty,
			\]
			is a solution of \eqref{transformed-system}, with initial conditions
			\begin{equation}\label{initial-data-forward}
				X(0)=x(t_0),
				\qquad
				X'(0)=\frac{\dot x(t_0)}{\sqrt{\varepsilon(t_0)}},
				\qquad
				\Lambda(0)=\lambda(t_0).
			\end{equation}
			
			\item
			Conversely, if \((X,\Lambda)\) is a solution of
			\eqref{transformed-system} on \([0,s_\infty)\), then
			\[
			x(t):=X(s(t)),
			\qquad
			\lambda(t):=\Lambda(s(t)),
			\qquad
			t\ge t_0,
			\]
			is a solution of \eqref{original-system}, with initial conditions
			\begin{equation}\label{initial-data-backward}
				x(t_0)=X(0),
				\qquad
				\dot x(t_0)=\sqrt{\varepsilon(t_0)}\,X'(0),
				\qquad
				\lambda(t_0)=\Lambda(0).
			\end{equation}
		\end{enumerate}
	\end{proposition}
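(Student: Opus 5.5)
The argument is a direct chain-rule computation in both directions, using the $C^1$ diffeomorphism $s\mapsto t(s)$.

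\textbf{The time change.} Since $\varepsilon\in C^1$ is strictly positive, $\dot s(t)=\sqrt{\varepsilon(t)}>0$. Hence $s$ is a strictly increasing $C^2$ bijection of $[t_0,\infty)$ onto $[0,s_\infty)$. Its inverse $t(s)$ is $C^2$ with
\[
t'(s)=\frac{1}{\sqrt{\varepsilon(t(s))}}=\sqrt{a(s)}.
\]
The $C^2$ regularity of $s$ comes from $\sqrt{\varepsilon}\in C^1$. I will also verify the formula for $p$ in \eqref{B-p}. We have $a(s)=\varepsilon(t(s))^{-1}$, so
\[
a'(s)=-\frac{\dot\varepsilon(t(s))}{\varepsilon(t(s))^{2}}\,t'(s)=-\frac{\dot\varepsilon}{\varepsilon^{5/2}}.
\]
Dividing by $a=\varepsilon^{-1}$ gives $p=-\dot\varepsilon/\varepsilon^{3/2}$, as claimed.

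\textbf{Part (i).} Set $X(s)=x(t(s))$ and $\Lambda(s)=\lambda(t(s))$. The chain rule gives
\[
X'(s)=\dot x(t)\,t'(s)=\frac{\dot x(t)}{\sqrt{\varepsilon(t)}},
\qquad\text{so}\qquad \dot x(t)=\sqrt{\varepsilon(t)}\,X'(s(t)).
\]
Differentiating $X'(s)=\dot x(t(s))\,\varepsilon(t(s))^{-1/2}$ once more yields
\[
X''(s)=\frac{\ddot x}{\varepsilon}-\frac{\dot\varepsilon\,\dot x}{2\varepsilon^{2}}.
\]
Using $\dot x=\sqrt{\varepsilon}\,X'$, the second term equals $\frac{p}{2}X'$. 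Therefore
\[
\ddot x=\varepsilon\Bigl(X''-\tfrac{p}{2}X'\Bigr),\qquad \delta\sqrt{\varepsilon}\,\dot x=\delta\varepsilon X'.
\]
Next, $x+\beta\dot x=X+\beta\sqrt{\varepsilon}\,X'=\widehat X$, because $\sqrt{\varepsilon}=1/\sqrt{a}$. Thus the gradient and penalty terms of \eqref{original-system} combine into $\nabla_x\mathcal L_\rho(\widehat X,\Lambda)$. Dividing the primal equation by $\varepsilon=1/a$ gives exactly the first equation of \eqref{transformed-system}.

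For the dual equation, $\Lambda'(s)=\dot\lambda\,\varepsilon^{-1/2}$. The extrapolated point is
\[
x+\frac{\gamma}{\sigma\sqrt{\varepsilon}}\dot x=X+\frac{\gamma}{\sigma}X',
\]
and the prefactor becomes $\frac{\gamma}{\sqrt\varepsilon}\cdot\frac{1}{\sqrt\varepsilon}=\gamma a$. This is the second equation. The initial data \eqref{initial-data-forward} follow by evaluating at $s=0$, $t=t_0$.

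\textbf{Part (ii).} Every step above is an identity between quantities linked by a bijective $C^2$ time change, so the same computation runs backwards. Set $x(t)=X(s(t))$. Then
\[
\dot x=\sqrt{\varepsilon}\,X',\qquad \ddot x=\varepsilon X''+\tfrac{\dot\varepsilon}{2\sqrt\varepsilon}X'.
\]
Substituting into \eqref{transformed-system} multiplied by $\varepsilon$ recovers \eqref{original-system}. The dual equation is handled in the same way, and the initial data \eqref{initial-data-backward} follow at $t=t_0$.

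\textbf{Regularity.} The only point needing care is that $X$ is $C^2$ (and $\Lambda$ is $C^1$) whenever $x$ is $C^2$ (and $\lambda$ is $C^1$), and conversely. This holds because $t(\cdot)$ and $s(\cdot)$ are $C^2$, which requires $\varepsilon\in C^1$; this is exactly why that extra assumption appears in the statement. If $s_\infty<\infty$, part (ii) must be read with $X$ defined on all of $[0,s_\infty)$, which is precisely the domain assumed in the statement. I expect no real obstacle here, only careful bookkeeping of the $p/2$ term in the second derivative.
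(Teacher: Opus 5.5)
Your proposal is correct and follows essentially the same route as the paper: a direct chain-rule computation using $\dot x=\sqrt{\varepsilon}\,X'$, $\ddot x=\varepsilon\bigl(X''-\tfrac{p}{2}X'\bigr)$ and $\dot\lambda=\sqrt{\varepsilon}\,\Lambda'$, followed by multiplying or dividing by $\varepsilon=1/a$ in each direction. Your extra checks of the formula for $p$ and of the $C^2$ regularity of the time change are left implicit in the paper, but they do not change the argument.
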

	
	\begin{proof}
		Since \(\varepsilon(t)>0\), the function \(s(\cdot)\) in
		\eqref{time-change} is strictly increasing.
		
		We first prove \textup{(i)}. It follows from \eqref{time-change} that
		\[
		\frac{ds}{dt}
		=
		\sqrt{\varepsilon(t)}
		=
		\frac{1}{\sqrt{a(s)}}.
		\]
		Consequently,
		\begin{equation}\label{xdot-transform}
			\dot x(t)
			=
			\sqrt{\varepsilon(t)}\,X'(s(t))
			=
			\frac{1}{\sqrt{a(s(t))}}X'(s(t)).
		\end{equation}
		Moreover,
		\[
		x(t)+\beta(t)\dot x(t)
		=
		X(s)+\frac{\beta(t(s))}{\sqrt{a(s)}}X'(s).
		\]
		Differentiating once more gives
		\[
		\begin{aligned}
			\ddot x(t)
			&=
			\frac{\dot\varepsilon(t)}
			{2\sqrt{\varepsilon(t)}}X'(s(t))
			+\sqrt{\varepsilon(t)}X''(s(t))\frac{ds}{dt}\\
			&=
			\frac{\dot\varepsilon(t)}
			{2\sqrt{\varepsilon(t)}}X'(s(t))
			+\varepsilon(t)X''(s(t)).
		\end{aligned}
		\]
		Substituting these relations into the first equation of
		\eqref{original-system} and multiplying both sides by \(a(s)\), we
		obtain the first equation of \eqref{transformed-system}. For the dual equation, since
		\[
		\dot\lambda(t)
		=
		\Lambda'(s)\frac{ds}{dt}
		=
		\Lambda'(s)\sqrt{\varepsilon(t)},
		\]
		it follows from the dual equation of \eqref{original-system} that
		\[
		\Lambda'(s)
		=
		\frac{\gamma}{\varepsilon(t)}
		\left[
		A\left(
		x(t)+\frac{\gamma}
		{\sigma\sqrt{\varepsilon(t)}}\dot x(t)
		\right)-b
		\right].
		\]
		Together with \eqref{xdot-transform}, this yields the second equation
		of \eqref{transformed-system}. The initial conditions in
		\eqref{initial-data-forward} follow from \(s(t_0)=0\).
		
		We finally prove \textup{(ii)}. Let \((X,\Lambda)\) solve
		\eqref{transformed-system} on \([0,s_\infty)\), and define
		\[
		x(t)=X(s(t)),
		\qquad
		\lambda(t)=\Lambda(s(t)).
		\]
		The chain rule gives
		\[
		\dot x(t)
		=
		\sqrt{\varepsilon(t)}\,X'(s(t)),
		\]
		\[
		\begin{aligned}
			\ddot x(t)
			=
			\frac{\dot\varepsilon(t)}
			{2\sqrt{\varepsilon(t)}}X'(s(t))
			+\varepsilon(t)X''(s(t))
			=
			\varepsilon(t)
			\left[
			X''(s(t))-\frac{p(s(t))}{2}X'(s(t))
			\right],
		\end{aligned}
		\]
		and
		\[
		\dot\lambda(t)
		=
		\sqrt{\varepsilon(t)}\,\Lambda'(s(t)).
		\]
		Moreover,
		\[
		\begin{aligned}
			\widehat X(s(t))
			=
			X(s(t))
			+\frac{\beta(t)}{\sqrt{a(s(t))}}X'(s(t))
			=
			x(t)+\beta(t)\dot x(t).
		\end{aligned}
		\]
		Multiplying the first equation of \eqref{transformed-system} by
		\(\varepsilon(t)=1/a(s(t))\) and using the preceding identities gives
		\[
		\ddot x(t)
		+\delta\sqrt{\varepsilon(t)}\dot x(t)
		+\nabla_x\mathcal{L}_{\rho}
		\bigl(x(t)+\beta(t)\dot x(t),\lambda(t)\bigr)
		+\varepsilon(t)x(t)=0,
		\]
		which is the primal equation of \eqref{original-system}. Similarly,
		the second equation of \eqref{transformed-system} gives
		\[
		\begin{aligned}
			\dot\lambda(t)
			&=
			\sqrt{\varepsilon(t)}\,\gamma a(s(t))
			\left[
			A\left(
			X(s(t))+\frac{\gamma}{\sigma}X'(s(t))
			\right)-b
			\right]\\
			&=
			\frac{\gamma}{\sqrt{\varepsilon(t)}}
			\left[
			A\left(
			x(t)+\frac{\gamma}
			{\sigma\sqrt{\varepsilon(t)}}\dot x(t)
			\right)-b
			\right],
		\end{aligned}
		\]
		which is the dual equation of \eqref{original-system}. Because
		\(s(t_0)=0\), the initial data satisfy
		\eqref{initial-data-backward}. This proves the converse direction.
	\end{proof}
\begin{remark}
	\label{transformed-time-remark}
	The time change does not necessarily map
	\([t_0,\infty)\) onto \([0,\infty)\).
	Accordingly, the transformed system is initially considered on
	\([0,s_\infty)\). Conditions ensuring \(s_\infty=\infty\) are
	given in Proposition~\ref{boundedness-proposition}.
\end{remark}
\section{Fast convergence analysis}\label{fast convergence}
In this section, we develop a Lyapunov analysis for the dynamical system \eqref{transformed-system} and establish the boundedness estimates needed for the convergence analysis. Returning to the original dynamical system \eqref{original-system}, we prove that the Lagrangian gap, objective residual, and feasibility violation are of
order \(O(\varepsilon(t))\), while the velocity norm is of order
\(O(\sqrt{\varepsilon(t)})\). We first identify a KKT multiplier associated with the minimum-norm solution. The range property established below will be used later to control the dual trajectory.
\begin{lemma}
	\label{KKT-lemma}
	Let $t_0>0$ and $x^*=P_S(0)$. Assume that $f:\mathbb{R}^n\to\mathbb{R}$ is convex and continuously
	differentiable, with an $L$-Lipschitz continuous gradient and the solution set $S\neq\emptyset$. Then, there exists a multiplier
	$\lambda^\dagger\in\mathbb{R}^m$ such that
	\begin{equation}\label{KKT}
		Ax^*=b,
		\qquad
		\nabla f(x^*)+A^\top\lambda^\dagger=0.
	\end{equation}
	It can be chosen so that
	\begin{equation*}\label{initial-range}
		\lambda(t_0)-\lambda^\dagger\in\operatorname*{Ran} A.
	\end{equation*}
	For this choice, every solution of \eqref{original-system} satisfies
	\begin{equation}\label{range-invariance}
		\lambda(t)-\lambda^\dagger\in\operatorname*{Ran} A
		\qquad(t\ge t_0).
	\end{equation}
\end{lemma}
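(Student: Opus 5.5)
Since $S\neq\emptyset$ and $x^*=P_S(0)\in S$, the plan has three steps: first produce some KKT multiplier for $x^*$, then shift it by an element of $\ker A^\top$ to enforce the range condition at $t_0$, and finally show that the dual equation keeps $\lambda(t)$ in the affine set $\lambda^\dagger+\operatorname*{Ran}A$.

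\emph{Existence of a multiplier.} The point $x^*$ minimizes the convex differentiable function $f$ over the affine set $\mathcal F=\{x:Ax=b\}$. First-order optimality gives
\[
\langle\nabla f(x^*),y-x^*\rangle\ge0 \quad\text{for all } y\in\mathcal F .
\]
Since $\mathcal F-x^*=\ker A$ is a linear subspace, this inequality must hold for both $d$ and $-d$ with $d\in\ker A$. Hence $\nabla f(x^*)\in(\ker A)^\perp=\operatorname*{Ran}A^\top$. So there exists $\lambda_0^\dagger$ with $\nabla f(x^*)+A^\top\lambda_0^\dagger=0$, and $Ax^*=b$ holds because $x^*\in\mathcal F$. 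Because the constraints are affine, no Slater-type qualification is needed.

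\emph{Choosing the representative.} Every $\lambda_0^\dagger+\kappa$ with $\kappa\in\ker A^\top$ is also a multiplier, since $A^\top\kappa=0$. Use the orthogonal decomposition $\mathbb R^m=\operatorname*{Ran}A\oplus\ker A^\top$ and write
\[
\lambda(t_0)-\lambda_0^\dagger=r+\kappa,\qquad r\in\operatorname*{Ran}A,\ \kappa\in\ker A^\top .
\]
Set $\lambda^\dagger:=\lambda_0^\dagger+\kappa$. Then \eqref{KKT} still holds, and $\lambda(t_0)-\lambda^\dagger=r\in\operatorname*{Ran}A$. Equivalently, $\lambda^\dagger=\lambda_0^\dagger+P_{\ker A^\top}(\lambda(t_0)-\lambda_0^\dagger)$.

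\emph{Invariance along the trajectory.} By Proposition~\ref{well-posedness-proposition}, $\lambda\in C^1$. Integrating the dual equation of \eqref{original-system} gives
\[
\lambda(t)-\lambda^\dagger=\bigl(\lambda(t_0)-\lambda^\dagger\bigr)+\int_{t_0}^t\frac{\gamma}{\sqrt{\varepsilon(u)}}\Bigl[A\Bigl(x(u)+\tfrac{\gamma}{\sigma\sqrt{\varepsilon(u)}}\dot x(u)\Bigr)-b\Bigr]du .
\]
The integrand has the form $Aw(u)-b$. Since $b=Ax^*$, it equals $A\bigl(w(u)-x^*\bigr)$ and lies in $\operatorname*{Ran}A$. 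A finite-dimensional subspace is closed, so the integral of a continuous function with values in it stays in it; one can see this by applying $P_{\ker A^\top}$ inside the integral, which gives zero. Therefore $\lambda(t)-\lambda^\dagger\in\operatorname*{Ran}A$ for all $t\ge t_0$.

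The main subtlety, mild as it is, is that the multiplier is not unique when $A$ lacks full row rank. The range condition is exactly what selects a representative, and it relies on $b\in\operatorname*{Ran}A$, which feasibility guarantees. The trajectory is fixed first by its initial datum $\lambda(t_0)$, and only then is $\lambda^\dagger$ chosen; so $\lambda^\dagger$ depends on $\lambda_0:=\lambda(t_0)$, which is consistent with the statement.
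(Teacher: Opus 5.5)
Your proposal is correct and follows essentially the paper's proof. The existence step uses the same orthogonality of $\nabla f(x^*)$ to $\ker A$. Your multiplier $\lambda_0^\dagger+P_{\ker A^\top}(\lambda(t_0)-\lambda_0^\dagger)$ is exactly the paper's $P_{\operatorname{Ran}A}\widetilde\lambda^\dagger+P_{\ker A^\top}\lambda(t_0)$. Your integral argument for invariance is the paper's observation that $P_{\ker A^\top}\lambda(t)$ is constant, written in integrated rather than differentiated form.
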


\begin{proof}
	Let $h\in\operatorname*{Ker} A$.  Since $Ax^*=b$, every point
	$x^*+\theta h$ is feasible.  The convex function $\theta\longmapsto f(x^*+\theta h)$ has a minimum at $\theta=0$.  Thus,
	\[
	0=\langle \nabla f(x^*),h\rangle
	\qquad(h\in\operatorname*{Ker} A).
	\]
	Thus
	\[
	\nabla f(x^*)\in(\operatorname*{Ker} A)^\perp=\operatorname*{Ran} A^\top.
	\]
	There is consequently a vector $\widetilde\lambda^\dagger\in\mathbb{R}^m$ such that
	\[
	\nabla f(x^*)+A^\top\widetilde\lambda^\dagger=0.
	\]
	Since the feasible set is nonempty, $b\in\operatorname*{Ran} A$.  The right-hand side of
	the dual equation in \eqref{original-system} is a linear combination of
	$Ax(t)$, $A\dot x(t)$, and $b$, and therefore belongs to $\operatorname*{Ran} A$.  Hence
	\[
	\frac{d}{dt}P_{\operatorname*{Ker} A^\top}\lambda(t)=0,
	\]
	so that
	\[
	P_{\operatorname*{Ker} A^\top}\lambda(t)
	=
	P_{\operatorname*{Ker} A^\top}\lambda(t_0)
	\qquad(t\ge t_0).
	\]
	Define
	\[
	\lambda^\dagger
	:=
	P_{\operatorname*{Ran} A}\widetilde\lambda^\dagger
	+P_{\operatorname*{Ker} A^\top}\lambda(t_0).
	\]
	Since $A^\top P_{\operatorname*{Ker} A^\top}=0$ and
	$A^\top P_{\operatorname*{Ran} A}\widetilde\lambda^\dagger
	=A^\top\widetilde\lambda^\dagger$, this adjusted multiplier still satisfies
	\eqref{KKT}.  Moreover,
	\[
	\begin{aligned}
		P_{\operatorname*{Ker} A^\top}
		\bigl(\lambda(t)-\lambda^\dagger\bigr)
		=
		P_{\operatorname*{Ker} A^\top}\lambda(t)
		-P_{\operatorname*{Ker} A^\top}\lambda(t_0)=0.
	\end{aligned}
	\]
	Since $(\operatorname*{Ker} A^\top)^\perp=\operatorname*{Ran} A$ in finite dimensions,
	\eqref{range-invariance} follows.
\end{proof}

The next lemma provides the estimates for the implicit gradient terms
needed in the energy calculations of Lemmas~\ref{main-energy-lemma} and \ref{cross-lemma}.
\begin{lemma}
	\label{implicit-estimates-lemma}
	Let $(X(s),\Lambda(s))$ be a solution of the dynamical system \eqref{transformed-system}, $x^*=P_S(0)$ and $\lambda^\dagger$ constructed in Lemma \ref{KKT-lemma}, and let \(s(t)\), \(s_\infty\), \(t=t(s)\), and \(a(s)\) be defined by
	\eqref{time-change}--\eqref{B-p}. Suppose that the assumptions of
	Proposition \ref{time-change-lemma} hold. Suppose further that
	\(\beta:[t_0,\infty)\to[0,\infty)\) is bounded, that is, there exists a constant $\overline{\beta}\geq 0$
	such that $0\leq\beta(t)\leq\overline{\beta}$ for all $t\geq t_0$. Consequently, for all \(s\in[0,s_\infty)\), $0\leq\beta(t(s))\leq\overline{\beta}$.
	Then, for every \(s\in[0,s_\infty)\), 
		\begin{equation}\label{monotone-implicit}
		\langle\nabla f(X(s))-\nabla f(\widehat X(s)), X'(s)\rangle
		\le0,
	\end{equation}
	and, for every fixed $\mu\in\mathbb{R}^m$,
	\begin{equation}\label{implicit-position-estimate}
		\begin{aligned}
			\langle\nabla f(\widehat X(s))+A^\top\mu,
			X(s)-x^*\rangle
			\ge{}
			f(X(s))-f(x^*)+\langle\mu, AX(s)-b\rangle
			-\frac{L}{2}\Vert\widehat X(s)-X(s)\Vert^2.
		\end{aligned}
	\end{equation}
	Furthermore, 
	\begin{equation}\label{implicit-gradient-norm}
		\begin{aligned}
			\Vert\nabla f(\widehat X(s))+A^\top\lambda^\dagger\Vert^2
			\le
			4L (\mathcal{L}(X(s), \lambda^\dagger)-\mathcal{L}(x^*, \lambda^\dagger))
			+\frac{2L^2\overline\beta^2}{a(s)}\Vert X'(s)\Vert^2,
		\end{aligned}
	\end{equation}
	where $\widehat{X}(s)$ is defined by \eqref{implicit-point-s}.
\end{lemma}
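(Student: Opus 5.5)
The three estimates rest on three standard facts for a convex \(C^1\) function with \(L\)-Lipschitz gradient: monotonicity of \(\nabla f\); the descent lemma; and the co-coercivity-type inequality
\[
\Vert\nabla f(y)-\nabla f(x)\Vert^2\le 2L\bigl(f(y)-f(x)-\langle\nabla f(x),y-x\rangle\bigr).
\]
Throughout we use
\[
\widehat X(s)-X(s)=\frac{\beta(t(s))}{\sqrt{a(s)}}X'(s),
\]
with \(0\le\beta(t(s))\le\overline\beta\) and \(a(s)>0\).

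\emph{Estimate \eqref{monotone-implicit}.} If \(\beta(t(s))=0\), then \(\widehat X(s)=X(s)\) and the left-hand side vanishes. Otherwise \(X'(s)=\frac{\sqrt{a(s)}}{\beta(t(s))}(\widehat X(s)-X(s))\), and monotonicity of \(\nabla f\) gives
\[
\langle\nabla f(\widehat X)-\nabla f(X),\widehat X-X\rangle\ge0 .
\]
Multiplying by the positive factor \(\sqrt{a(s)}/\beta(t(s))\) yields the claim.

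\emph{Estimate \eqref{implicit-position-estimate}.} Split \(X-x^*=(X-\widehat X)+(\widehat X-x^*)\).
\begin{itemize}
\item Convexity gives \(\langle\nabla f(\widehat X),\widehat X-x^*\rangle\ge f(\widehat X)-f(x^*)\).
\item The descent lemma applied at \(\widehat X\) toward \(X\) gives \(f(X)\le f(\widehat X)+\langle\nabla f(\widehat X),X-\widehat X\rangle+\frac L2\Vert X-\widehat X\Vert^2\), hence \(\langle\nabla f(\widehat X),X-\widehat X\rangle\ge f(X)-f(\widehat X)-\frac L2\Vert X-\widehat X\Vert^2\).
\end{itemize}
Summing the two, the \(f(\widehat X)\) terms cancel and we get \(\langle\nabla f(\widehat X),X-x^*\rangle\ge f(X)-f(x^*)-\frac L2\Vert\widehat X-X\Vert^2\). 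Finally, since \(Ax^*=b\), we have \(\langle A^\top\mu,X-x^*\rangle=\langle\mu,AX-b\rangle\) exactly, which gives \eqref{implicit-position-estimate}.

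\emph{Estimate \eqref{implicit-gradient-norm}.} Write
\[
\nabla f(\widehat X)+A^\top\lambda^\dagger=\bigl(\nabla f(\widehat X)-\nabla f(X)\bigr)+\bigl(\nabla f(X)-\nabla f(x^*)\bigr),
\]
using \(\nabla f(x^*)=-A^\top\lambda^\dagger\) from \eqref{KKT}. Then \(\Vert u+v\Vert^2\le2\Vert u\Vert^2+2\Vert v\Vert^2\) splits the norm into two terms.
\begin{itemize}
\item By the Lipschitz property, \(\Vert\nabla f(\widehat X)-\nabla f(X)\Vert^2\le L^2\overline\beta^2\Vert X'\Vert^2/a(s)\). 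After the factor \(2\), this is the second term of \eqref{implicit-gradient-norm}.
\item Apply co-coercivity with \(x=x^*\), \(y=X\): \(\Vert\nabla f(X)-\nabla f(x^*)\Vert^2\le2L\bigl(f(X)-f(x^*)-\langle\nabla f(x^*),X-x^*\rangle\bigr)\). By \eqref{KKT} and \(Ax^*=b\), \(-\langle\nabla f(x^*),X-x^*\rangle=\langle\lambda^\dagger,AX-b\rangle\). So the bracket is exactly \(\mathcal L(X,\lambda^\dagger)-\mathcal L(x^*,\lambda^\dagger)\) (with \(\mathcal L=\mathcal L_0\)). After the factor \(2\), this gives the \(4L\) term.
\end{itemize}

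The only step that is not immediate is this co-coercivity inequality. It is a standard consequence (Baillon--Haddad / Nesterov) of convexity plus \(L\)-smoothness, and I would cite it or derive it quickly as follows. Apply the descent lemma to \(\phi(y)=f(y)-\langle\nabla f(x),y\rangle\), which is convex, \(L\)-smooth, and minimized at \(x\). Compare \(\phi(x)\) with \(\phi\bigl(y-\frac1L\nabla\phi(y)\bigr)\) to obtain \(\phi(x)\le\phi(y)-\frac1{2L}\Vert\nabla\phi(y)\Vert^2\), which rearranges to the inequality. Everything else is pure bookkeeping.
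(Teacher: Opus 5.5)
Your proof is correct and follows the paper's argument essentially step for step. The monotonicity step is identical, and the position estimate is the same combination of the descent lemma and convexity (the paper applies it to $h_\mu(u)=f(u)+\langle\mu,Au-b\rangle$ directly instead of adding the linear term afterwards). Your co-coercivity derivation via $\phi(y)=f(y)-\langle\nabla f(x^*),y\rangle$ is exactly the paper's descent-lemma argument for $h_{\lambda^\dagger}$, since the two functions differ only by a constant. The final $\Vert u+v\Vert^2\le 2\Vert u\Vert^2+2\Vert v\Vert^2$ splitting with the Lipschitz bound also matches.
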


\begin{proof}
	Since $f$ is convex,  monotonicity of $\nabla f$ gives
	\[
\langle\nabla f(\widehat X(s))-\nabla f(X(s)), \widehat X(s)-X(s)\rangle
	\ge0.
	\]
	If $\beta(t(s))>0$, then
	\[
	X'(s)
	=
	\frac{\sqrt{a(s)}}{\beta(t(s))}(\widehat X(s)-X(s)).
	\]
	By the definition of $a(s)$ and the positivity of $\varepsilon$, we have $a(s)>0$.
	Hence
	\[
	\langle\nabla f(\widehat X(s))-\nabla f(X(s)), X'(s)\rangle\ge0.
	\]
  If
	$\beta(t(s))=0$, then $\widehat X(s)=X(s)$ and the left-hand side of
	\eqref{monotone-implicit} is zero. Thus, \eqref{monotone-implicit} follows.
	
	Next, fix $\mu\in\mathbb{R}^m$ and define
	\[
	h_\mu(u):=f(u)+\langle\mu, Au-b\rangle.
	\]
	The gradient of $h_\mu$ is $L$-Lipschitz.  The descent lemma gives
	\begin{equation}\label{descent}
	    h_\mu(X(s))
	    \le
	    h_\mu(\widehat X(s))
	    +\langle\nabla h_\mu(\widehat X(s)), X(s)-\widehat X(s)\rangle
	    +\frac L2\Vert X(s)-\widehat X(s)\Vert^2.
	\end{equation}
	Moreover, it follows from the convexity of $h_{\mu}$ that
	\[
	h_\mu(x^*)
	\ge
	h_\mu(\widehat X(s))
	+\langle\nabla h_\mu(\widehat X(s)), x^*-\widehat X(s)\rangle.
	\]
	Subtracting the above inequality from \eqref{descent} yields
	\[
	\begin{aligned}
		h_\mu(X(s))-h_\mu(x^*)
		\le{}&
		\langle\nabla h_\mu(\widehat X(s)), X(s)-x^*\rangle
		+\frac L2\Vert X(s)-\widehat X(s)\Vert^2.
	\end{aligned}
	\]
	Since $Ax^*=b$, this is precisely
	\eqref{implicit-position-estimate}.
	
	For the last estimate, apply the descent lemma to $h_{\lambda^\dagger}$ at $X(s)-\frac1L\nabla h_{\lambda^\dagger}(X(s))$.
	It gives
	\[
	\begin{aligned}
		h_{\lambda^\dagger}\left(X(s)-\frac1L\nabla h_{\lambda^\dagger}(X(s))\right)
		\le
		h_{\lambda^\dagger}(X(s))-\frac1{2L}\Vert \nabla h_{\lambda^\dagger}(X(s))\Vert^2.
	\end{aligned}
	\]
	Since $x^*$ minimizes $h_{\lambda^\dagger}$,
	\[
	h_{\lambda^\dagger}(x^*)
	\le
	h_{\lambda^\dagger}\left(X(s)-\frac1L\nabla h_{\lambda^\dagger}(X(s))\right).
	\]
	Combining the last two inequalities yields
	\[
	\Vert\nabla h_{\lambda^\dagger}(X(s))\Vert^2
	\le
	2L\bigl(h_{\lambda^\dagger}(X(s))-h_{\lambda^\dagger}(x^*)\bigr)
	=
	2L(\mathcal{L}(X(s),\lambda^\dagger)-\mathcal{L}(x^*,\lambda^\dagger)),
	\]
	i.e.
	\begin{equation}\label{gap-gradient}
		\Vert \nabla f(X(s))+A^\top\lambda^\dagger\Vert^2\leq2L(\mathcal{L}(X(s),\lambda^\dagger)-\mathcal{L}(x^*,\lambda^\dagger)).
	\end{equation}
	Thus,	using $\Vert u+v\Vert^2\le2\Vert u\Vert^2+2\Vert v\Vert^2$,
	\eqref{gap-gradient}, and Lipschitz continuity,
	\[
	\begin{aligned}
		\Vert\nabla f(\widehat X(s))+A^\top\lambda^\dagger\Vert^2
		&\le
		2\Vert\nabla f(X(s))+A^\top\lambda^\dagger\Vert^2
		+2\Vert\nabla f(\widehat X(s))-\nabla f(X(s))\Vert^2\\
		&\le
		4L(\mathcal{L}(X(s),\lambda^\dagger)-\mathcal{L}(x^*,\lambda^\dagger))+2L^2\Vert\widehat X(s)-X(s)\Vert^2.
	\end{aligned}
	\]
	Finally,  by using \eqref{implicit-point-s} and
	$\beta(t(s))\le\overline\beta$, we obtain
	\eqref{implicit-gradient-norm}.
\end{proof}

Using these preliminary estimates, we introduce the main energy function
and derive its fundamental dissipation inequality.
	\begin{lemma}
		\label{main-energy-lemma}

		Let $(X(s),\Lambda(s))$ be a solution of the dynamical system \eqref{transformed-system}, $x^*=P_S(0)$ and $\lambda^\dagger$ constructed in Lemma \ref{KKT-lemma}, and let \(s(t)\), \(s_\infty\), \(t=t(s)\), and \(a(s)\) be defined by
		\eqref{time-change}--\eqref{B-p}. Suppose that the assumptions of Lemma \ref{implicit-estimates-lemma} hold. Assume further that $\varepsilon\in C^2([t_0,\infty);(0,\infty))$ is nonincreasing with $\epsilon\rightarrow 0$ and there exists a constant $s_T\ge 0$ such that
		\begin{equation}\label{energy-positive-condition}
			p(s)< 2(\delta-\frac{\sigma}{\gamma})
			\qquad(s\ge s_T).
		\end{equation}
		Define
		\begin{equation}\label{main-energy}
			\begin{aligned}
				\mathcal E(s):={}&
				a(s)(\mathcal{L}_{\rho}(X(s),\lambda^\dagger)-\mathcal{L}_{\rho}(x^*,\lambda^\dagger))
				+\frac{1}{2}
				\Vert X'(s)+\frac{\sigma}{\gamma}(X(s)-x^*)\Vert^2\\
				&+\frac{\sigma}{2\gamma}
				\left(
				\delta-\frac{\sigma}{\gamma}-\frac{p(s)}2
				\right)
				\Vert X(s)-x^*\Vert^2
				+\frac{\sigma}{2\gamma^2}
				\Vert \Lambda(s)-\lambda^\dagger\Vert^2
				+\frac{1}{2}\Vert X(s)\Vert^2.
			\end{aligned}
		\end{equation}
		Then $\mathcal E(s)\ge0$  for all  $s\in[s_T,s_\infty)$, and
		\begin{equation}\label{main-energy-derivative}
				\begin{aligned}
					\mathcal E'(s)
					\le{}&
					-\left(\frac{\sigma}{\gamma}-p(s)\right)
					a(s)(\mathcal{L}(X(s),\lambda^\dagger)-\mathcal{L}(x^*,\lambda^\dagger))
					-
					\left[
					\delta-\frac{\sigma}{\gamma}-\frac{p(s)}2
					-\frac{\sigma L}{2\gamma}\beta(t(s))^2
					\right]\Vert X'(s)\Vert^2\\
					&-\frac{\rho\beta(t(s))\sqrt{a(s)}}{2}
					\Vert AX'(s)\Vert^2
					-\rho a(s)\left[
					\frac{\sigma}{\gamma}-\frac{p(s)}2
					-\frac{\sigma^2\beta(t(s))}{2\gamma^2\sqrt{a(s)}}
					\right]\Vert AX(s)-b\Vert^2\\
					&-\frac{\sigma}{\gamma}\langle X(s), X(s)-x^*\rangle
					-\frac{\sigma p'(s)}{4\gamma}
					\Vert X(s)-x^*\Vert^2,\qquad \forall s\in [0, s_{\infty}).
			\end{aligned}
		\end{equation}
	\end{lemma}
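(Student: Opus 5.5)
Nonnegativity is immediate; the work is in the dissipation inequality, which is a direct differentiation of each term of $\mathcal E$ followed by substitution of \eqref{transformed-system}. Abbreviate $X=X(s)$, $\xi:=X-x^*$, $\nu:=\Lambda-\lambda^\dagger$, $c:=\sigma/\gamma$.

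\emph{Nonnegativity.} For $s\ge s_T$ every term of \eqref{main-energy} is nonnegative. The Lagrangian gap term is nonnegative because $(x^*,\lambda^\dagger)$ satisfies \eqref{KKT}, so $x^*$ minimizes $\mathcal L_\rho(\cdot,\lambda^\dagger)$. The coefficient of $\Vert\xi\Vert^2$ is positive by \eqref{energy-positive-condition}.

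\emph{Computing $\mathcal E'$.} I would group the terms as follows.

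1. The gap term contributes
\[
a'(\mathcal L_\rho(X,\lambda^\dagger)-\mathcal L_\rho(x^*,\lambda^\dagger)) + a\langle\nabla_x\mathcal L_\rho(X,\lambda^\dagger),X'\rangle ,
\]
with $a'=pa$.

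2. The mixed kinetic term has derivative $\langle X'+c\xi,\,X''+cX'\rangle$. Substitute
\[
X''=-(\delta-p/2)X'-a\nabla_x\mathcal L_\rho(\widehat X,\Lambda)-X
\]
and expand. This produces $-(\delta-c-p/2)\Vert X'\Vert^2$, a cross term $-c(\delta-c-p/2)\langle X',\xi\rangle$, the term $-\langle X'+c\xi,X\rangle$, and $-a\langle \nabla_x\mathcal L_\rho(\widehat X,\Lambda),X'+c\xi\rangle$.

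3. The $\Vert\xi\Vert^2$ term gives $c(\delta-c-p/2)\langle\xi,X'\rangle-\frac{cp'}{4}\Vert\xi\Vert^2$. This cancels the cross term from step 2 exactly and leaves the $p'$ term in the claim.

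4. The term $\frac12\Vert X\Vert^2$ gives $\langle X,X'\rangle$, which cancels $-\langle X',X\rangle$ and leaves $-c\langle X,\xi\rangle$.

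5. The dual term gives $\frac{\sigma}{\gamma^2}\langle\nu,\Lambda'\rangle=a\langle\nu,A(cX+X')-cb\rangle=a\langle A^\top\nu,c\xi+X'\rangle$, using $Ax^*=b$ and $\gamma\cdot\gamma/\sigma=1/c$. Writing $\nabla_x\mathcal L_\rho(\widehat X,\Lambda)=\nabla f(\widehat X)+A^\top\Lambda+\rho A^\top(A\widehat X-b)$, the $A^\top\nu$ parts cancel. What remains is
\[
-a\langle\nabla f(\widehat X)+A^\top\lambda^\dagger+\rho A^\top(A\widehat X-b),\,X'+c\xi\rangle .
\]

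\emph{Estimating what remains.} Combined with the gap derivative from step 1, the $X'$-part splits into three pieces.

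(a) $a\langle\nabla f(X)-\nabla f(\widehat X),X'\rangle\le0$ by \eqref{monotone-implicit}.

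(b) The penalty piece is $a\rho\langle AX-b-(A\widehat X-b),AX'\rangle=-\rho\beta\sqrt a\Vert AX'\Vert^2$. I keep only half of it.

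(c) For the $c\xi$ part, apply \eqref{implicit-position-estimate} with $\mu=\lambda^\dagger$. This gives at most $-ca(\mathcal L(X,\lambda^\dagger)-\mathcal L(x^*,\lambda^\dagger))+\frac{caL}{2}\Vert\widehat X-X\Vert^2$. Since $\Vert\widehat X-X\Vert^2=\beta^2\Vert X'\Vert^2/a$, this is exactly the $\frac{\sigma L}{2\gamma}\beta^2\Vert X'\Vert^2$ correction in the claim. Together with $pa\,\mathrm{gap}$ it yields the coefficient $-(c-p)$.

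(d) The penalty part of the $c\xi$ term is $-ca\rho\langle A\widehat X-b,AX-b\rangle=-ca\rho\Vert AX-b\Vert^2-c\rho\beta\sqrt a\langle AX',AX-b\rangle$. Young's inequality bounds the last piece by $\frac{\rho\beta\sqrt a}{2}\Vert AX'\Vert^2+\frac{c^2\rho\beta\sqrt a}{2}\Vert AX-b\Vert^2$, which is $\frac{\sigma^2\beta}{2\gamma^2\sqrt a}\,\rho a\Vert AX-b\Vert^2$ as claimed. The $p/2$ contribution in the $\Vert AX-b\Vert^2$ coefficient comes from $a'\cdot\frac\rho2\Vert AX-b\Vert^2$ in the gap of $\mathcal L_\rho$.

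\emph{Main obstacle.} The delicate point is bookkeeping $\mathcal L_\rho$ versus $\mathcal L$: step 1 differentiates $\mathcal L_\rho$, while the claim uses $\mathcal L=\mathcal L_0$ plus separate $\Vert AX-b\Vert^2$ terms. I would split $\mathcal L_\rho=\mathcal L+\frac\rho2\Vert AX-b\Vert^2$ throughout and check each $\Vert AX-b\Vert^2$ coefficient. For example, (c) gives $-c$ on the $\mathcal L$ part, and the quadratic part contributes $-\rho a(c-p/2)$, matching the claim after the Young step.
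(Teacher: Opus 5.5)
Your proposal is correct and follows essentially the same route as the paper: term-by-term differentiation, cancellation of the $\langle X-x^*,X'\rangle$, $\langle X,X'\rangle$ and $A^\top(\Lambda-\lambda^\dagger)$ terms, then \eqref{monotone-implicit}, \eqref{implicit-position-estimate} with $\mu=\lambda^\dagger$, and a Young step on the $\rho\beta\sqrt{a}$ cross term, which is exactly the paper's square completion. The only blemish is the parenthetical ``$\gamma\cdot\gamma/\sigma=1/c$''; it should read $\frac{\sigma}{\gamma^2}\cdot\gamma\cdot\frac{\gamma}{\sigma}=1$, though the formula $a\langle\Lambda-\lambda^\dagger,\,A(\frac{\sigma}{\gamma}X+X')-\frac{\sigma}{\gamma}b\rangle$ you derive from it is right.
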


\begin{proof}
	It follows from the definition of $a(s)$, the positivity of \(\varepsilon\)  and \eqref{energy-positive-condition} that \(\mathcal E(s)\geq0\) for every	\(s\in[s_T,s_\infty)\). Next,
	We differentiate \eqref{main-energy} with respect to $s$. To this end, we compute the derivative of each term in \eqref{main-energy} separately. Since $a'(s)=p(s)a(s)$ 
	we obtain
	\begin{equation*}\label{first-line-derivative}
		\begin{aligned}
			&\frac{d}{ds}
			\left\{
			a(s)(\mathcal{L}_{\rho}(X(s),\lambda^\dagger)-\mathcal{L}_{\rho}(x^*,\lambda^\dagger))
			\right\}\\
			=&
			p(s)a(s)(\mathcal{L}_{\rho}(X(s),\lambda^\dagger)-\mathcal{L}_{\rho}(x^*,\lambda^\dagger))\\
			&+a(s)\langle\nabla f(X(s))+A^\top\lambda^\dagger, X'(s)\rangle
			+\rho a(s)\langle AX(s)-b, AX'(s)\rangle.
		\end{aligned}
	\end{equation*}
	Next, it follows from the primal equation in \eqref{transformed-system} that,
	\[
	\begin{aligned}
		\frac{d}{ds}
		\left(X'(s)+\frac{\sigma}{\gamma}(X(s)-x^*)\right)
		={}&
		-\left(\delta-\frac{\sigma}{\gamma}-\frac {p(s)}{2}\right)X'(s)
		-a(s)\bigl(
		\nabla f(\widehat X(s))+A^\top\lambda^\dagger
		\bigr)\\
		&-a(s)A^\top(\Lambda(s)-\lambda^\dagger)
		-\rho a(s)A^\top(AX(s)-b)\\
		&-\rho\beta(t(s))\sqrt {a(s)}\,A^\top AX'(s)-X(s).
	\end{aligned}
	\]
	Consequently,
	\begin{equation*}\label{kinetic-derivative}
		\begin{aligned}
			&\frac12\frac{d}{ds}
			\Vert X'(s)+\frac{\sigma}{\gamma}(X(s)-x^*)\Vert^2\\
			={}&
			-\left(\delta-\frac{\sigma}{\gamma}-\frac{p(s)}{2}\right)\Vert X'(s)\Vert^2
			-\frac{\sigma}{\gamma}\left(\delta-\frac{\sigma}{\gamma}-\frac{p(s)}{2}\right)
			\langle X(s)-x^*, X'(s)\rangle\\
			&-a(s)\langle X'(s)+\frac{\sigma}{\gamma}(X(s)-x^*),
			\nabla f(\widehat X(s))+A^\top\lambda^\dagger\rangle-\frac{\rho\sigma}{\gamma} a(s)\Vert AX(s)-b\Vert^2\\
			&-a(s)\langle\Lambda(s)-\lambda^\dagger, AX'(s)+\frac{\sigma}{\gamma}(AX(s)-b)\rangle
			-\rho a(s)\langle AX'(s),AX(s)-b\rangle\\
			&-\rho\beta(t(s))\sqrt{a(s)}
			\langle AX'(s), AX'(s)+\frac{\sigma}{\gamma}(AX(s)-b)\rangle
			-\langle X'(s)+\frac{\sigma}{\gamma}(X(s)-x^*), X(s)\rangle.
		\end{aligned}
	\end{equation*}
		The derivative of the third term in \eqref{main-energy} is
	\begin{equation*}\label{position-derivative}
		\begin{aligned}
			&\frac{d}{ds}
			\left[
			\frac{\sigma}{2\gamma}
			\left(\delta-\frac{\sigma}{\gamma}-\frac{p(s)}{2}\right)
			\Vert X(s)-x^*\Vert^2
			\right]\\
			={}&
			\frac{\sigma}{\gamma}
			\left(\delta-\frac{\sigma}{\gamma}-\frac {p(s)}{2}\right)
			\langle X(s)-x^*, X'(s)\rangle
			-\frac{\sigma p'(s)}{4\gamma}\Vert X(s)-x^*\Vert^2.
		\end{aligned}
	\end{equation*}
	The dual equation in \eqref{transformed-system} gives
	\begin{equation*}\label{dual-energy-derivative}
		\begin{aligned}
			\frac{\sigma}{2\gamma^2}\frac{d}{ds}
			\Vert \Lambda(s)-\lambda^\dagger\Vert^2
			&=a(s)\langle\Lambda(s)-\lambda^\dagger, AX'(s)+\frac{\sigma}{\gamma}(AX(s)-b)\rangle.
		\end{aligned}
	\end{equation*}
	Finally,
	\begin{equation*}\label{center-derivative}
		\frac12\frac{d}{ds}\Vert X(s)\Vert ^2=\langle X(s), X'(s)\rangle.
	\end{equation*}
	Thus, summing the above derivative identities, we obtain the derivative of $\mathcal{E}(s)$ with respect to $s$:
	\begin{equation}\label{energy-before-implicit-estimate}
		\begin{aligned}
			\mathcal E'(s)
			={}&
			p(s)a(s)(\mathcal{L}(X(s),\lambda^\dagger)-\mathcal{L}(x^*,\lambda^\dagger))
			+a(s)\langle\nabla f(X(s))-\nabla f(\widehat X(s)),X'(s)\rangle\\
			&-\frac{\sigma}{\gamma}a(s)
			\langle\nabla f(\widehat X(s))+A^\top\lambda^\dagger,X(s)-x^*\rangle
			-\left(\delta-\frac{\sigma}{\gamma}-\frac{p(s)}{2}\right)\Vert X'(s)\Vert^2\\
			&-\rho\left(\frac{\sigma}{\gamma}-\frac {p(s)}{2}\right)
			a(s)\Vert AX(s)-b\Vert^2
			-\frac{\sigma}{\gamma}\langle X(s), X(s)-x^*\rangle
			-\frac{\sigma p'(s)}{4\gamma}\Vert X(s)-x^*\Vert^2\\
			&-\rho\beta(t(s))\sqrt {a(s)}
			\langle AX'(s), AX'(s)+\frac{\sigma}{\gamma}(AX(s)-b)\rangle.
		\end{aligned}
	\end{equation}
	It follows from the definition of $a(s)$, the positivity of \(\varepsilon\) and  \eqref{monotone-implicit} that
	\[
	a(s)\langle\nabla f(X(s))-\nabla f(\widehat X(s)), X'(s)\rangle\le0.
	\]
   Moreover, \eqref{implicit-position-estimate} with $\mu=\lambda^\dagger$ and \eqref{implicit-point-s} yield
	\[
	\begin{aligned}
		&-\frac{\sigma}{\gamma}a(s)
		\langle\nabla f(\widehat X(s))+A^\top\lambda^\dagger, X(s)-x^*\rangle\\
		\le&
		-\frac{\sigma}{\gamma}a(s)(\mathcal{L}(X(s),\lambda^\dagger)-\mathcal{L}(x^*,\lambda^\dagger))
		+\frac{\sigma La(s)}{2\gamma}\Vert \widehat X(s)-X(s)\Vert^2\\
		=&
		-\frac{\sigma}{\gamma}a(s)(\mathcal{L}(X(s),\lambda^\dagger)-\mathcal{L}(x^*,\lambda^\dagger))
		+\frac{\sigma L}{2\gamma}\beta(t(s))^2\Vert X'(s)\Vert^2.
	\end{aligned}
	\]
What's more, a straightforward calculation yields
	\begin{equation*}\label{augmented-square-completion}
		\begin{aligned}
			&-\rho\beta(t(s))\sqrt{a(s)}
			\langle AX'(s), AX'(s)+\frac{\sigma}{\gamma}(AX(s)-b)\rangle\\
			=&
			-\frac{\rho\beta(t(s))\sqrt{a(s)}}{2}\Vert AX'(s)\Vert^2
			-\frac{\rho\beta(t(s))\sqrt {a(s)}}{2}
			\Vert AX'(s)+\frac{\sigma}{\gamma}(AX(s)-b)\Vert^2\\
			&
			+\frac{\rho\sigma^2\beta(t(s))\sqrt {a(s)}}{2\gamma^2}
			\Vert AX(s)-b\Vert^2\\
			\le&
			-\frac{\rho\beta(t(s))\sqrt {a(s)}}{2}\Vert AX'(s)\Vert^2
			+\frac{\rho\sigma^2\beta(t(s))\sqrt {a(s)}}{2\gamma^2}
			\Vert AX(s)-b\Vert^2.
		\end{aligned}
	\end{equation*}
Finally, substituting the above three estimates into \eqref{energy-before-implicit-estimate}, we obtain \eqref{main-energy-derivative}.
\end{proof}

To obtain direct control of the dual variable, we next estimate a
suitable primal--dual cross term.
\begin{lemma}\label{cross-lemma}
	Let $(X(s),\Lambda(s))$ be a solution of the dynamical system \eqref{transformed-system}, and let $x^*=P_S(0)$ and $\lambda^\dagger$ constructed in Lemma \ref{KKT-lemma}. Suppose that the assumptions of Lemma \ref{main-energy-lemma} hold.
	Define
	\begin{equation*}\label{cross-term}
		W(s)
		:=
		\frac1{a(s)}
		\langle \Lambda(s)-\lambda^\dagger,A\left[X'(s)+\frac{\sigma}{\gamma}
		(X(s)-x^*)\right]\rangle.
	\end{equation*}
Assume, in addition, that
\begin{equation*}\label{cross-assumptions}
	0\le p(s)<\frac{\sigma}{\gamma},
	\quad\text{and}\quad
	a(s)\ge\max\left\{1, 16\rho^2\Vert A\Vert^2\right\}.
\end{equation*}
	Then, for every \(s\in[s_T,s_\infty)\), 
	\begin{equation}\label{cross-upper}
			\begin{aligned}
			W'(s)
				\le{}&
				-\frac14
				\Vert A^\top(\Lambda(s)-\lambda^\dagger)\Vert^2
				+8L(\mathcal{L}(X(s),\lambda^\dagger)-\mathcal{L}(x^*,\lambda^\dagger))\\
				&+C_{\mathrm{vel}}\Vert X'(s)\Vert^2
				+C_{\mathrm{pos}}\Vert X(s)-x^*\Vert^2
				+\frac {a(s)}{4}\Vert AX(s)-b\Vert^2
				+\frac4{a(s)^2}\Vert x^*\Vert^2,
		\end{aligned}
	\end{equation}
where
	\begin{align}
	C_{\mathrm{vel}}
	:={}&
	\Vert A\Vert^2\left(\frac{\gamma^2}{\sigma}+\frac{\gamma}{2}\right)
	+4\gamma^2\Vert A\Vert^2+\frac{4\sigma^2}{\gamma^2}
	+2\left(\delta-\frac{\sigma}{\gamma}\right)^2
	+4L^2\overline\beta^2
	+2\rho^2\overline\beta^2\Vert A\Vert^4,\nonumber\\
	C_{\mathrm{pos}}
	:={}&
	\frac{\gamma\Vert A\Vert^2}{2}
	+4\sigma^2\Vert A\Vert^2
	+\frac{4\sigma^4}{\gamma^4}+4.\nonumber
\end{align}
\end{lemma}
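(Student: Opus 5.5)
Write $Z(s):=X'(s)+\frac{\sigma}{\gamma}(X(s)-x^*)$, $\Delta(s):=\Lambda(s)-\lambda^\dagger$, and $W=a^{-1}\langle\Delta,AZ\rangle$. The plan is to differentiate $W$ directly and substitute both equations of \eqref{transformed-system}. The single favourable term is $-\Vert A^\top\Delta\Vert^2$, produced by the multiplier term in the primal equation. Every other contribution is then absorbed by Young's inequality, at the cost of part of this term or of terms already controlled by $\mathcal E$.

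By the product rule,
\[
W'(s)=-p(s)W(s)+\frac{1}{a(s)}\langle \Lambda'(s),AZ(s)\rangle+\frac{1}{a(s)}\langle \Delta(s),AZ'(s)\rangle .
\]
The dual equation gives $\Lambda'=\gamma a\,A(X+\tfrac{\gamma}{\sigma}X')-\gamma a b$, and $Ax^*=b$ turns this into $\gamma a\bigl[(AX-b)+\tfrac{\gamma}{\sigma}AX'\bigr]$. Hence the second term equals $\gamma\langle (AX-b)+\tfrac{\gamma}{\sigma}AX',\,AX'+\tfrac{\sigma}{\gamma}(AX-b)\rangle$. By Cauchy--Schwarz and Young, this is bounded by $\Vert A\Vert^2(\tfrac{\gamma^2}{\sigma}+\tfrac{\gamma}{2})\Vert X'\Vert^2+\tfrac{\gamma}{2}\Vert A\Vert^2\Vert X-x^*\Vert^2+\dots$, which accounts for the first entries of $C_{\rm vel}$ and $C_{\rm pos}$; here $\Vert AX-b\Vert\le\Vert A\Vert\Vert X-x^*\Vert$.

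For $Z'$ I will reuse the identity from the proof of Lemma~\ref{main-energy-lemma}. After writing $X=(X-x^*)+x^*$, pairing $A^\top\Delta$ against it gives
\[
\frac1a\langle\Delta,AZ'\rangle=-\Vert A^\top\Delta\Vert^2-\langle A^\top\Delta,\;\nabla f(\widehat X)+A^\top\lambda^\dagger+\rho A^\top(AX-b)+\rho\beta a^{-1/2}A^\top AX'\rangle-\frac1a\langle A^\top\Delta,(\delta-\tfrac{\sigma}{\gamma}-\tfrac p2)X'+(X-x^*)+x^*\rangle .
\]
Each remaining pairing is split as $|\langle A^\top\Delta,v\rangle|\le\frac1{16}\Vert A^\top\Delta\Vert^2+4\Vert v\Vert^2$, or with a similar constant. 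The budget is to use at most $\tfrac34$ of $\Vert A^\top\Delta\Vert^2$, leaving $-\tfrac14$. The individual pairings are handled as follows.
\begin{itemize}
\item The gradient term is bounded by Lemma~\ref{implicit-estimates-lemma}, \eqref{implicit-gradient-norm}. With the factor $2$ this gives $8L$ times the gap, plus $4L^2\overline\beta^2 a^{-1}\Vert X'\Vert^2\le 4L^2\overline\beta^2\Vert X'\Vert^2$, since $a\ge1$.
\item The term $\rho\beta a^{-1/2}A^\top AX'$ gives $2\rho^2\overline\beta^2\Vert A\Vert^4\Vert X'\Vert^2$.
\item The term $a^{-1}(\delta-\frac\sigma\gamma-\frac p2)X'$ uses $0\le p<\sigma/\gamma$. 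This gives $|\delta-\frac\sigma\gamma-\frac p2|\le|\delta-\frac\sigma\gamma|+\frac{\sigma}{2\gamma}$, which produces the $(\delta-\sigma/\gamma)^2$ and $\sigma^2/\gamma^2$ contributions.
\item The terms $a^{-1}(X-x^*)$ and $a^{-1}x^*$ produce $4\Vert X-x^*\Vert^2$ and $\tfrac4{a^2}\Vert x^*\Vert^2$.
\item The penalty term $\rho\langle A^\top\Delta,A^\top(AX-b)\rangle$ is the one that must yield $\tfrac a4\Vert AX-b\Vert^2$ rather than a constant multiple. Splitting it as $\frac{1}{16}\Vert A^\top\Delta\Vert^2+4\rho^2\Vert A\Vert^2\Vert AX-b\Vert^2$ and using $a\ge16\rho^2\Vert A\Vert^2$ gives exactly $\le\frac a4\Vert AX-b\Vert^2$.
\end{itemize}

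The term $-pW$ remains. Since $p\ge0$ and $a\ge1$, I bound $|pW|\le\frac{\sigma}{\gamma}\Vert\Delta\Vert\,\Vert AZ\Vert$, but this involves $\Vert\Delta\Vert$, not $\Vert A^\top\Delta\Vert$. Lemma~\ref{KKT-lemma} resolves this: $\Delta\in\operatorname{Ran}A$, so I rewrite $\langle\Delta,AZ\rangle=\langle A^\top\Delta,Z\rangle$ and obtain $|pW|\le\frac{\sigma}{\gamma}\Vert A^\top\Delta\Vert\,\Vert Z\Vert$, with no smallest-singular-value constant needed. Young then gives $\frac1{16}\Vert A^\top\Delta\Vert^2+\frac{4\sigma^2}{\gamma^2}\Vert Z\Vert^2$, which expands to terms of the shape $\frac{4\sigma^2}{\gamma^2}\Vert X'\Vert^2$ and $\frac{4\sigma^4}{\gamma^4}\Vert X-x^*\Vert^2$.

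The main obstacle is the bookkeeping: choosing the Young weights so that the total loss in $\Vert A^\top\Delta\Vert^2$ stays at most $\tfrac34$ while reproducing the stated constants. The terms $4\gamma^2\Vert A\Vert^2$ and $4\sigma^2\Vert A\Vert^2$ suggest that the $\Lambda'$ pairing is instead split with weight $4$. That split routes $\Vert AX-b\Vert\le\Vert A\Vert\Vert X-x^*\Vert$ into $C_{\rm pos}$ and $\tfrac{\gamma}{\sigma}AX'$ into $C_{\rm vel}$. I would fix these weights once the full expansion is written out.
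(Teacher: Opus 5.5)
Your architecture is the paper's: your display for $\frac1a\langle\Delta,AZ'\rangle$ is exactly \eqref{cross-exact}, the only good term is $-\Vert A^\top\Delta\Vert^2$, and every other pairing is absorbed by Young against it. (Incidentally, $\langle\Delta,AZ\rangle=\langle A^\top\Delta,Z\rangle$ is just the definition of the transpose, so the range property from Lemma~\ref{KKT-lemma} is not needed for the $pW$ term.) However, two steps as you describe them do not close with the stated constants.

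The decisive problem is how you spend the feasibility allowance. Observe that $\frac1a\langle\Lambda',AZ\rangle=\frac{\gamma^2}{\sigma}\Vert AZ\Vert^2$ exactly. For $X'=0$ this equals $\sigma\Vert A(X-x^*)\Vert^2$. Suppose the penalty pairing consumes all of $\frac a4\Vert AX-b\Vert^2$, as in your $\frac1{16}/4$ split, and you then route $\Vert AX-b\Vert\le\Vert A\Vert\Vert X-x^*\Vert$ into $C_{\mathrm{pos}}$. Then $C_{\mathrm{pos}}$ needs at least $\sigma\Vert A\Vert^2$. But the $\Vert A\Vert^2$-part of $C_{\mathrm{pos}}$ is only $(4\sigma^2+\gamma/2)\Vert A\Vert^2$, which is smaller when, e.g., $\gamma=1/10$ and $\sigma=1/8$. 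For large $\Vert A\Vert$ the remaining entries $\frac{4\sigma^4}{\gamma^4}+4$ cannot compensate. The paper instead splits the allowance as $\frac a4=\frac a8+\frac a8$. The penalty pairing is bounded by $\frac18\Vert A^\top\Delta\Vert^2+2\rho^2\Vert A\Vert^2\Vert AX-b\Vert^2\le\frac18\Vert A^\top\Delta\Vert^2+\frac a8\Vert AX-b\Vert^2$. The other half feeds an $a$-weighted Young inequality on the $\Lambda'$ pairing, using $a\ge1$:
\[
\gamma|\langle AX-b,AZ\rangle|\le\frac a8\Vert AX-b\Vert^2+\frac{2\gamma^2}{a}\Vert AZ\Vert^2\le\frac a8\Vert AX-b\Vert^2+4\gamma^2\Vert A\Vert^2\Vert X'\Vert^2+4\sigma^2\Vert A\Vert^2\Vert X-x^*\Vert^2 .
\]
This is exactly where the entries $4\gamma^2\Vert A\Vert^2$ and $4\sigma^2\Vert A\Vert^2$ come from; they do not arise from routing through $\Vert A\Vert\Vert X-x^*\Vert$.

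The second problem is the $\delta$-pairing. Your triangle bound $|\delta-\frac\sigma\gamma-\frac p2|\le|\delta-\frac\sigma\gamma|+\frac{\sigma}{2\gamma}$, once squared, produces a cross term $|\delta-\frac\sigma\gamma|\frac\sigma\gamma$ with no counterpart in $C_{\mathrm{vel}}$. Moreover, the entry $\frac{4\sigma^2}{\gamma^2}$ you attribute to it is already fully used by the $pW$ term. What you need is hypothesis \eqref{energy-positive-condition}, inherited from Lemma~\ref{main-energy-lemma}. Together with $p\ge0$ it gives $0<\delta-\frac\sigma\gamma-\frac p2\le\delta-\frac\sigma\gamma$, hence the bound $\frac18\Vert A^\top\Delta\Vert^2+2(\delta-\frac\sigma\gamma)^2\Vert X'\Vert^2$.

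Finally, the stated constants correspond to spending exactly $\frac18$ of $\Vert A^\top\Delta\Vert^2$ on each of six pairings: $pW$, $\delta$, gradient, penalty, $\rho\beta$, and $\frac1a\langle A^\top\Delta,X\rangle$ (or your two halves at $\frac1{16}$ each). That is, use $xy\le\frac18x^2+2y^2$ throughout. With your $\frac1{16}/4$ split, the $pW$ term yields $\frac{8\sigma^2}{\gamma^2}$ and $\frac{8\sigma^4}{\gamma^4}$ rather than $\frac{4\sigma^2}{\gamma^2}$ and $\frac{4\sigma^4}{\gamma^4}$. With these adjustments your plan becomes the paper's proof.
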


\begin{proof}
	Differentiating $W(s)$ with respect to $s$ and using the dynamical system \eqref{transformed-system}, we obtain the following exact expression for its derivative:
	\begin{equation}\label{cross-exact}
		\begin{aligned}
		W'(s)
			={}&
			-p(s)W(s)
			+\gamma\langle AX(s)-b+\frac{\gamma}{\sigma}AX'(s),A[X'(s)+\frac{\sigma}{\gamma}(X(s)-x^*)]\rangle\\
			&-\frac{\delta-\sigma/\gamma-p(s)/2}{a(s)}
			\langle A^\top(\Lambda(s)-\lambda^\dagger),X'(s)\rangle
			-\langle A^\top(\Lambda(s)-\lambda^\dagger),\nabla f(\widehat X(s))+A^\top\lambda^\dagger\rangle\\
			&-\Vert A^\top(\Lambda(s)-\lambda^\dagger)\Vert^2
			-\rho\langle A^\top(\Lambda(s)-\lambda^\dagger),A^\top(AX(s)-b)\rangle\\
			&-\frac{\rho\beta(t(s))}{\sqrt a(s)}
			\langle A^\top(\Lambda(s)-\lambda^\dagger),A^\top AX'(s)\rangle
			-\frac{1}{a(s)}\langle A^\top(\Lambda(s)-\lambda^\dagger),X(s)\rangle.
		\end{aligned}
	\end{equation}
	We next estimate all terms that do not have a definite sign. Throughout the following estimates, we repeatedly use Young's inequality in the form
	\begin{equation}\label{young-one-eighth}
		ab\leq \frac{1}{8}a^2+2b^2.
	\end{equation}
	Since $0\le p(s)<\sigma/\gamma$ and $a(s)\ge1$,
	\[
	\begin{aligned}
		p(s)|W(s)|
		&\overset{\eqref{young-one-eighth}}{\leq}
		\frac18\Vert A^\top(\Lambda(s)-\lambda^\dagger)\Vert^2
		+\frac{2p(s)^2}{a(s)^2}
		\Vert X'(s)+\frac{\sigma}{\gamma}(X(s)-x^*)\Vert^2\\
		&\le
		\frac18\Vert A^\top(\Lambda(s)-\lambda^\dagger)\Vert^2
		+\frac{4\sigma^2}{\gamma^2}\Vert X'(s)\Vert^2
		+\frac{4\sigma^4}{\gamma^4}\Vert X(s)-x^*\Vert^2.
	\end{aligned}
	\]
	Next, it follows from 
	$0<\delta-\sigma/\gamma-{p(s)}/2\le\delta-\sigma/\gamma$ and $a(s)\ge1$ that
	\[
	\begin{aligned}
		\frac{\delta-\sigma/\gamma-{p(s)}/2}{a(s)}
		\left|
		\langle A^\top(\Lambda(s)-\lambda^\dagger),X'(s)\rangle
		\right|\le
		\frac18\Vert A^\top(\Lambda(s)-\lambda^\dagger)\Vert^2
		+2\left(\delta-\frac{\sigma}{\gamma}\right)^2\Vert X'(s)\Vert^2.
	\end{aligned}
	\]
	For the implicit-gradient term, \eqref{young-one-eighth},
	\eqref{implicit-gradient-norm} and $a(s)\ge 1$ give
	\[
	\begin{aligned}
		&\left|
		\langle A^\top(\Lambda(s)-\lambda^\dagger),\nabla f(\widehat X(s))+A^\top\lambda^\dagger\rangle
		\right|\\\
		\le&
		\frac18\Vert A^\top(\Lambda(s)-\lambda^\dagger)\Vert^2
		+2\Vert \nabla f(\widehat X(s))+A^\top\lambda^\dagger\Vert^2\\
		\le&
		\frac18\Vert A^\top(\Lambda(s)-\lambda^\dagger)\Vert^2
		+8L(\mathcal{L}(X(s),\lambda^\dagger)-\mathcal{L}(x^*,\lambda^\dagger))
		+\frac{4L^2\overline\beta^2}{a(s)}\Vert X'(s)\Vert^2\\
		\le&
		\frac18\Vert A^\top(\Lambda(s)-\lambda^\dagger)\Vert^2
		+8L(\mathcal{L}(X(s),\lambda^\dagger)-\mathcal{L}(x^*,\lambda^\dagger))
		+4L^2\overline\beta^2\Vert X'(s)\Vert^2.
	\end{aligned}
	\]
	Moreover, from \eqref{young-one-eighth} and $a(s)\ge16\rho^2\Vert A\Vert^2$, we have
	\[
	\begin{aligned}
		\rho
		\left|
		\langle A^\top(\Lambda(s)-\lambda^\dagger),A^\top(AX(s)-b)\rangle
		\right|&\le
		\frac18\Vert A^\top(\Lambda(s)-\lambda^\dagger)\Vert^2
		+2\rho^2\Vert A\Vert^2\Vert AX(s)-b\Vert^2\\
		&\leq \frac18\Vert A^\top(\Lambda(s)-\lambda^\dagger)\Vert^2+\frac{a(s)}{8}\Vert AX(s)-b\Vert^2.
	\end{aligned}
	\]
	For the final term in \eqref{cross-exact}, \eqref{young-one-eighth} and $a(s)\ge 1$ yield
	\[
	\begin{aligned}
		\frac{1}{a(s)}
		\left|
		\langle A^\top(\Lambda(s)-\lambda^\dagger),X(s)\rangle
		\right|
		&\le
		\frac18\Vert A^\top(\Lambda(s)-\lambda^\dagger)\Vert^2
		+\frac2{a(s)^2}\Vert X(s)\Vert^2\\
		&\le
		\frac18\Vert A^\top(\Lambda(s)-\lambda^\dagger)\Vert^2
		+\frac4{a(s)^2}\Vert X(s)-x^*\Vert^2
		+\frac4{a(s)^2}\Vert x^*\Vert^2\\
		&\le
		\frac18\Vert A^\top(\Lambda(s)-\lambda^\dagger)\Vert^2
		+4\Vert X(s)-x^*\Vert^2
		+\frac4{a(s)^2}\Vert x^*\Vert^2.
	\end{aligned}
	\]
	What's more, it follows from \eqref{young-one-eighth}, $a(s)\ge 1$  and the boundedness of $\beta(t)$ that
	\[
	\begin{aligned}
		&\frac{\rho\beta(t(s))}{\sqrt a(s)}
		\left|
		\langle A^\top(\Lambda(s)-\lambda^\dagger),A^\top AX'(s)\rangle
		\right|\\
		&\qquad\le
		\frac18\Vert A^\top(\Lambda(s)-\lambda^\dagger)\Vert^2
		+\frac{2\rho^2\beta(t(s))^2}{a(s)}
		\Vert A^\top AX'(s)\Vert^2\\
		&\qquad\le
		\frac18\Vert A^\top(\Lambda(s)-\lambda^\dagger)\Vert^2
		+2\rho^2\overline\beta^2\Vert A\Vert^4\Vert X'(s)\Vert^2.
	\end{aligned}
	\]
		It remains to estimate the first scalar product in
	\eqref{cross-exact}.  Applying \eqref{young-one-eighth} once again yields
\begin{equation*}
	\begin{aligned}
		&\gamma\left|\langle AX(s)-b+\frac{\gamma}{\sigma}AX'(s),A[X'(s)+\frac{\sigma}{\gamma}(X(s)-x^*)]\rangle\right|\\
		\leq &\left|
		\gamma\langle AX(s)-b,A[X'(s)+\frac{\sigma}{\gamma}(X(s)-x^*)]\rangle
		\right|+\frac{\gamma^2}{\sigma}
		\left|
		\langle AX'(s),A[X'(s)+\frac{\sigma}{\gamma}(X(s)-x^*)]\rangle
		\right|\\
		\leq &\frac {a(s)}{8}\Vert AX(s)-b\Vert^2
		+\frac{2\gamma^2\Vert A\Vert^2}{a(s)}
		\Vert X'(s)+\frac{\sigma}{\gamma}(X(s)-x^*)\Vert^2\\
		&+\frac{\gamma^2\Vert A\Vert^2}{\sigma}\Vert X'(s)\Vert^2
		+\gamma\Vert A\Vert^2\Vert X'(s)\Vert\Vert X(s)-x^*\Vert\\
		\leq &\frac {a(s)}{8}\Vert AX(s)-b\Vert^2
		+(4\gamma^2+\frac{\gamma^2}{\sigma}+\frac{\gamma}{2})\Vert A\Vert^2\Vert X'(s)\Vert^2
		+(4\sigma^2+\frac{\gamma}{2})\Vert A\Vert^2\Vert X(s)-x^*\Vert^2.
	\end{aligned}
\end{equation*}
	Finally, substituting the above seven estimates into \eqref{cross-exact} yields the conclusion of Lemma \ref{cross-lemma}.
\end{proof}

To establish the boundedness properties needed for the subsequent
analysis of convergence rates and strong convergence, introduce,
for \(\tau>0\), the modified energy
\begin{equation}
	\label{corrected-energy}
	\widetilde{\mathcal E}(s)
	:=
	\mathcal E(s)+\tau W(s).
\end{equation}
We then combine the main energy with the cross term to construct a
corrected energy and establish boundedness of the  trajectory generated by the dynamical system \eqref{transformed-system}.
\begin{proposition}
	\label{boundedness-proposition}
	
	Let \((X,\Lambda)\) be a solution of the dynamical system
	\eqref{transformed-system}, and let \(x^*=P_S(0)\) and
	\(\lambda^\dagger\) be as in Lemma~\ref{KKT-lemma}.
	Suppose that the assumptions of Lemma~\ref{main-energy-lemma} hold. Assume further that there exist \(s_T\in[0,s_\infty)\) and
	\(\eta_1,\eta_2>0\) such that
	\begin{equation}
		\label{bounded-conditions}
		\begin{cases}
			\displaystyle
			\frac{\sigma}{\gamma}-p(s)\geq\eta_1,
			\\[1ex]
			\displaystyle
			\delta-\frac{\sigma}{\gamma}-\frac{p(s)}2
			-\frac{\sigma L}{2\gamma}\beta(t(s))^2
			\geq\eta_2,
			\\[1ex]
			\displaystyle
			\frac{\sigma}{4\gamma}(-p'(s))_+
			\leq
			\frac{\sigma/\gamma+p(s)}8
		\end{cases}
		\qquad s\in[s_T,s_\infty).
	\end{equation}
 Let \(\tau,\omega>0\) satisfy
	\begin{equation}\label{tau-explicit}
		\begin{aligned}
			&0<\tau\le
			\min\left\{
			1,\,
			\frac{\eta_2}{2C_{\mathrm{vel}}},\,
			\frac{\sigma}{4\gamma C_{\mathrm{pos}}},\,
			\frac{\rho\sigma}{2\gamma}
			\right\},\\
			&0<\omega\le\min\biggl\{
			\frac{\eta_1}{6},\,
			\frac{\eta_2}{6},\,
			\frac{\sigma}{6\gamma},\,
			\frac{\tau\kappa_A^2\gamma^2}{6\sigma},
			\frac{\sigma/\gamma}
			{6\left(2\sigma^2/\gamma^2
				+(\sigma/\gamma)(\delta-\sigma/\gamma)+2\right)}
			\biggr\}.
		\end{aligned}
	\end{equation}
	where $C_{\mathrm{vel}}$ and $C_{\mathrm{pos}}$ are the explicit constants
	in Lemma \ref{cross-lemma} and 
	\(
	\kappa_A :=
	\begin{cases}
		\displaystyle
		\min_{\substack{u\in\operatorname{Ran} A\\ \|u\|=1}}
		\|A^\top u\|, & A\neq 0,\\[1mm]
		1, & A=0.
	\end{cases}
	\).
	The threshold \(s_T\) may be increased, if necessary, so that
	\begin{equation}
		\label{bounded-a-condition}
		a(s)\geq
		\max\left\{
		1,\,
		16\rho^2\|A\|^2,\,
		\frac{16\tau L}{\eta_1},\,
		\frac{2\tau\gamma\|A\|}{\sqrt{\sigma}},\,
		\frac{16\sigma^2\bar{\beta}^{\,2}}{\gamma^2}
		\right\},
		\qquad
		s\in[s_T,s_\infty).
	\end{equation}
Then, \(s_\infty=\infty\) and for every \(s\geq s_T\),
	\begin{equation}
		\label{energy-equivalence}
		\frac12\mathcal E(s)
		\leq
		\widetilde{\mathcal E}(s)
		\leq
		\frac32\mathcal E(s),
	\end{equation}
	and there is a  constant $C_0\ge0$ such that
	\begin{equation*}
		\label{corrected-energy-bound}
		\widetilde{\mathcal E}'(s)
		+\omega\widetilde{\mathcal E}(s)
		\leq C_0,
	\end{equation*}
	where $C_0:=
	\left(
	\frac{3\omega}{2}
	+\frac{\sigma}{\gamma}
	+4\tau
	\right)\|x^*\|^2.$
	Consequently,
	\[
	X,\quad X',\quad\Lambda,\quad
	X-x^*,\quad\Lambda-\lambda^\dagger
	\]
	are bounded on \([s_T,\infty)\).
\end{proposition}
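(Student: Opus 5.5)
We work on $[s_T,s_\infty)$ and proceed in three steps: energy equivalence, a dissipation inequality for $\widetilde{\mathcal E}$, and then Grönwall together with the conclusion $s_\infty=\infty$.

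\textbf{Energy equivalence.} To obtain \eqref{energy-equivalence}, bound $|\tau W(s)|\le\frac12\mathcal E(s)$. By Cauchy--Schwarz and Young,
\[
|W(s)|\le \frac{\|A\|}{a(s)}\|\Lambda(s)-\lambda^\dagger\|\,\Bigl\|X'(s)+\tfrac{\sigma}{\gamma}(X(s)-x^*)\Bigr\|
\le \frac{\|A\|\gamma}{a(s)\sqrt{\sigma}}\Bigl(\frac{\sigma}{2\gamma^2}\|\Lambda-\lambda^\dagger\|^2+\frac12\Bigl\|X'+\tfrac{\sigma}{\gamma}(X-x^*)\Bigr\|^2\Bigr).
\]
The condition $a(s)\ge 2\tau\gamma\|A\|/\sqrt\sigma$ in \eqref{bounded-a-condition} then gives $\tau|W|\le\frac12(\text{those two terms of }\mathcal E)\le\frac12\mathcal E$. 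All other terms of $\mathcal E$ are nonnegative because of \eqref{energy-positive-condition} and the fact that $(x^*,\lambda^\dagger)$ is a saddle point.

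\textbf{Dissipation inequality.} Next add \eqref{main-energy-derivative} and $\tau\times$\eqref{cross-upper}, using the hypotheses \eqref{bounded-conditions} and \eqref{bounded-a-condition} term by term.
\begin{enumerate}[label=\textup{(\alph*)},leftmargin=2.2em]
\item The Lagrangian gap has coefficient at most $-(\eta_1 a-8\tau L)\le-\frac{\eta_1}{2}a$, since $a\ge16\tau L/\eta_1$.
\item The velocity term has coefficient at most $-(\eta_2-\tau C_{\rm vel})\le-\eta_2/2$.
\item The feasibility term: $\frac{\sigma^2\bar\beta}{2\gamma^2\sqrt a}\le\frac{\sigma}{8\gamma}$ by the last condition in \eqref{bounded-a-condition}, and $p\ge0$ is used inside $\frac{\sigma}{\gamma}-\frac p2$ (with $p<\sigma/\gamma$). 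This leaves roughly $-\rho a\frac{\sigma}{4\gamma}$, which absorbs $\tau a/4$ because $\tau\le\rho\sigma/(2\gamma)$.
\item The Tikhonov term is handled by the identity $-\frac{\sigma}{\gamma}\langle X,X-x^*\rangle=-\frac{\sigma}{2\gamma}(\|X\|^2+\|X-x^*\|^2-\|x^*\|^2)$. This produces $-\frac{\sigma}{2\gamma}\|X-x^*\|^2$, which absorbs $\tau C_{\rm pos}\|X-x^*\|^2\le\frac{\sigma}{4\gamma}\|X-x^*\|^2$. The $p'$ term is controlled by the third condition in \eqref{bounded-conditions}, using $\frac{\sigma/\gamma+p}{8}\le\frac{\sigma}{4\gamma}$; the bookkeeping here is tight, so the split may need adjusting.
\item The constants $\frac{\sigma}{2\gamma}\|x^*\|^2$ and $\frac{4\tau}{a^2}\|x^*\|^2\le4\tau\|x^*\|^2$ go into $C_0$.
\item The dual variable appears only through $-\frac\tau4\|A^\top(\Lambda-\lambda^\dagger)\|^2$. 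By Lemma~\ref{KKT-lemma}, $\Lambda-\lambda^\dagger\in\operatorname{Ran}A$, so this is at most $-\frac{\tau\kappa_A^2}{4}\|\Lambda-\lambda^\dagger\|^2$.
\end{enumerate}

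\textbf{Comparing with $\omega\widetilde{\mathcal E}$.} Bound $\omega\widetilde{\mathcal E}\le\frac{3\omega}{2}\mathcal E$ and check each piece of $\frac{3\omega}{2}\mathcal E$ against the negative terms collected above.
\begin{itemize}
\item The gap piece $\frac{3\omega}{2}a\cdot\text{gap}$ is absorbed by (a) because $\omega\le\eta_1/6$. The quadratic penalty part of $\mathcal L_\rho$ is absorbed by (c).
\item The mixed square $\|X'+\frac\sigma\gamma(X-x^*)\|^2\le2\|X'\|^2+\frac{2\sigma^2}{\gamma^2}\|X-x^*\|^2$, together with the coefficient of $\|X-x^*\|^2$ (at most $\frac{\sigma}{2\gamma}(\delta-\sigma/\gamma)$ since $p\ge0$), is absorbed through the last bound on $\omega$ in \eqref{tau-explicit}.
\item The dual piece needs $\frac{3\omega\sigma}{4\gamma^2}\le\frac{\tau\kappa_A^2}{8}$, which holds because $\omega\le\tau\kappa_A^2\gamma^2/(6\sigma)$.
\item The piece $\frac{3\omega}{4}\|X\|^2$ is absorbed by $-\frac{\sigma}{2\gamma}\|X\|^2$, since $\omega\le\sigma/(6\gamma)$.
\end{itemize}
The leftover constant is at most $\bigl(\frac{3\omega}{2}+\frac\sigma\gamma+4\tau\bigr)\|x^*\|^2=C_0$. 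The main obstacle is this simultaneous absorption, especially the $\|X-x^*\|^2$ coefficient, which collects contributions from the $p'$ term, $C_{\rm pos}$, and the $\omega$ terms; it must stay nonpositive with the chosen constants.

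\textbf{Conclusion.} Grönwall gives $\widetilde{\mathcal E}(s)\le\widetilde{\mathcal E}(s_T)e^{-\omega(s-s_T)}+C_0/\omega$, so $\mathcal E\le2\widetilde{\mathcal E}$ is bounded on $[s_T,s_\infty)$. Since $a\ge1$ and the coefficients in $\mathcal E$ are positive, $X$, $X-x^*$, $X'$ and $\Lambda-\lambda^\dagger$ (hence $\Lambda$) are bounded.

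For $s_\infty=\infty$: if $s_\infty<\infty$, then $\int_{t_0}^\infty\sqrt{\varepsilon}<\infty$. Bounded $X$ would then give $x(t)=X(s(t))$ bounded and $\dot x=\sqrt\varepsilon X'$ integrable, so $x(t)$ converges as $t\to\infty$; the transformed system on $[0,s_\infty)$ has bounded $X''$ there, so the solution extends to $s_\infty$. To contradict this, I would argue that $a(s)=1/\varepsilon(t(s))\to\infty$ as $s\to s_\infty$, since $\varepsilon\to0$, and that the transformed primal equation multiplied by $1/a$ then forces $\nabla_x\mathcal L_\rho(x(t),\lambda(t))\to0$. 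I expect this step to be delicate, and it may instead require showing directly that $p$ bounded (from \eqref{bounded-conditions}) prevents $a$ from blowing up at a finite $s$: $\log a(s)-\log a(s_T)=\int_{s_T}^{s} p\le\frac{\sigma}{\gamma}(s-s_T)$ is finite for finite $s$, whereas $a\to\infty$ as $s\to s_\infty$. This is the argument I would actually use, and it yields $s_\infty=\infty$ directly.
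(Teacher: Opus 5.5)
Your overall architecture matches the paper's. You use the same bound $\tau|W|\le\frac12\mathcal E$ from $a\ge 2\tau\gamma\|A\|/\sqrt{\sigma}$, the same coefficient checks (a)--(c), the $\kappa_A$ estimate (f), and then $\omega\widetilde{\mathcal E}\le\frac32\omega\mathcal E$ and Gr\"onwall. You should also record why $\kappa_A>0$: $u\mapsto\|A^\top u\|$ has a positive minimum on the unit sphere of $\operatorname{Ran}A$, because $\operatorname{Ran}A\cap\operatorname{Ker}A^\top=\{0\}$.

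Your final argument for $s_\infty=\infty$ is correct. On $[s_T,s_\infty)$ one has $(\log a)'=p\le\sigma/\gamma$, whereas $a(s)\to\infty$ as $s\uparrow s_\infty$. This is the $s$-time form of the paper's remark that $\frac{d}{dt}\bigl(1/\sqrt{\varepsilon(t)}\bigr)=p/2$ is bounded, so $\int\sqrt{\varepsilon}=\infty$. The extension sketch that precedes it yields no contradiction and should be dropped.

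The genuine gap is where you sensed it: with your allocation, the $\|X-x^*\|^2$ coefficient does not close. After the polarization identity, that coefficient in $\widetilde{\mathcal E}'$ is at most
\[
-\frac{\sigma}{2\gamma}+\tau C_{\mathrm{pos}}+\frac{\sigma}{4\gamma}(-p')_+
\le-\frac{\sigma}{2\gamma}+\frac{\sigma}{4\gamma}+\frac{\sigma/\gamma+p}{8}
=-\frac{\sigma/\gamma-p}{8}.
\]
The hypotheses only guarantee this is $\le-\eta_1/8$. Against it you must absorb $\frac{3\omega}{2}\bigl[\frac{\sigma^2}{\gamma^2}+\frac{\sigma}{2\gamma}\bigl(\delta-\frac{\sigma}{\gamma}-\frac p2\bigr)\bigr]\|X-x^*\|^2$ coming from $\frac32\omega\mathcal E$. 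The stated bounds on $\omega$ do not tie this to $\eta_1/8$. The last bound does not involve $\eta_1$, and $\omega\le\eta_1/6$ gives at most $\frac{\eta_1}{4}$ times the bracket, which is $\le\eta_1/8$ only if the bracket is $\le\frac12$. So ``absorbed through the last bound on $\omega$'' has no negative term to absorb into.

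Meanwhile, you spent the large term $-\frac{\sigma}{2\gamma}\|X\|^2$ only on $\frac{3\omega}{4}\|X\|^2\le\frac{\sigma}{8\gamma}\|X\|^2$. The repair is to convert the surplus using $\|X-x^*\|^2\le2\|X\|^2+2\|x^*\|^2$:
\[
-\frac{3\sigma}{8\gamma}\|X\|^2\le-\frac{3\sigma}{16\gamma}\|X-x^*\|^2+\frac{3\sigma}{8\gamma}\|x^*\|^2 .
\]
Since $p\ge0$, the last bound on $\omega$ implies $\frac{3\omega}{2}\bigl[\frac{\sigma^2}{\gamma^2}+\frac{\sigma}{2\gamma}(\delta-\frac{\sigma}{\gamma})\bigr]\le\frac{\sigma}{8\gamma}$. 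The coefficient then becomes at most $-\frac{\sigma}{16\gamma}$, and the constant is $\bigl(\frac{7\sigma}{8\gamma}+4\tau\bigr)\|x^*\|^2\le C_0$.

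The paper allocates differently. It writes $-\frac{\sigma}{\gamma}\langle X,X-x^*\rangle=-\frac{\sigma}{\gamma}\|X-x^*\|^2-\frac{\sigma}{\gamma}\langle x^*,X-x^*\rangle$ and applies Young to the cross term. This leaves $-\frac{\sigma}{4\gamma}\|X-x^*\|^2$ after the $C_{\mathrm{pos}}$ and $p'$ losses. It then bounds $\frac{3\omega}{4}\|X\|^2\le\frac{3\omega}{2}\bigl(\|X-x^*\|^2+\|x^*\|^2\bigr)$. That is where the $\frac{3\omega}{2}\|x^*\|^2$ in $C_0$ and the ``$+2$'' in the last $\omega$-bound come from.
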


\begin{proof}
	Since \(\varepsilon\) is nonincreasing, \(p(s)\geq0\). The first condition in \eqref{bounded-conditions} therefore gives $0\leq p(s)<\frac{\sigma}{\gamma}.$
	Moreover, the second condition yields $\delta-\frac{\sigma}{\gamma}-\frac{p(s)}2>0.$
	Hence, Lemmas~\ref{main-energy-lemma} and
	\ref{cross-lemma} apply. What's more, since \(\dot{\varepsilon}\leq 0\),
	\[
	p(s)
	=
	-\frac{\dot{\varepsilon}(t(s))}
	{\varepsilon(t(s))^{3/2}}
	\geq0.
	\]
	And the first condition in
	\eqref{bounded-conditions} gives
	\[
	0\leq p(s)\leq\frac{\sigma}{\gamma}-\eta_1
	\qquad
	(s\in[s_0,s_\infty)).
	\]
	Moreover, $p(s(t))
	=
	2\frac{d}{dt}
	\left(
	\frac{1}{\sqrt{\varepsilon(t)}}
	\right).$
	Hence \(1/\sqrt{\varepsilon(t)}\) grows at most linearly, which
	implies \(s_\infty=\infty\). Since \(t(s)\to\infty\) and
	\(\varepsilon(t)\to0\), it follows that $a(s)=\frac{1}{\varepsilon(t(s))}\to\infty.$
	Thus, \(s_T\) can be chosen such that
	\eqref{bounded-a-condition} holds.
	
	Next, we prove \eqref{energy-equivalence}.  The definition of $W(s)$ and Young's inequality $ab\le(a^2+b^2)/2$ give
	\begin{align}
		\tau|W(s)|
		\leq &
		\frac{\tau\gamma\Vert A\Vert}{a(s)\sqrt\sigma}
		\left(
		\frac{\sqrt\sigma}{\gamma}
		\Vert \Lambda(s)-\lambda^\dagger\Vert
		\right)
		\Vert X'(s)+\frac{\sigma}{\gamma}(X(s)-x^*)\Vert\nonumber\\
		\leq &\frac{\tau\gamma\Vert A\Vert}{a(s)\sqrt\sigma}
		\left[
		\frac{\sigma}{2\gamma^2}\Vert \Lambda(s)-\lambda^\dagger\Vert^2
		+\frac12
		\Vert X'(s)+\frac{\sigma}{\gamma}(X(s)-x^*)\Vert^2
		\right]\nonumber\\
		\overset{\eqref{bounded-a-condition}}{\leq}&\frac{1}{2}\left[
		\frac{\sigma}{2\gamma^2}\Vert \Lambda(s)-\lambda^\dagger\Vert^2
		+\frac12
		\Vert X'(s)+\frac{\sigma}{\gamma}(X(s)-x^*)\Vert^2
		\right]\label{tauW_estimate}\\
		\overset{\eqref{main-energy}}{\leq}&\frac{1}{2}\mathcal{E}(s)\nonumber.
	\end{align}
	This proves \eqref{energy-equivalence}. Next, it follows from the definition of $\widetilde{\mathcal{E}}(s)$, \eqref{main-energy-derivative} and \eqref{cross-upper} that
	\begin{equation*}
		\begin{aligned}
			\widetilde{\mathcal{E}}'(s)\leq & -[(\frac{\sigma}{\gamma}-p(s))a(s)-8L\tau](\mathcal{L}(X(s),\lambda^\dagger)-\mathcal{L}(x^*,\lambda^\dagger))-\frac{\tau}{4}\Vert A^\top(\Lambda(s)-\lambda^\dagger)\Vert^2\\
			&-(\delta-\frac{\sigma}{\gamma}-\frac{p(s)}{2}-\frac{\sigma L}{2\gamma}\beta^2(t(s))-\tau C_{\mathrm{vel}})\Vert X'(s)\Vert^2-\frac{\rho\beta(t(s))\sqrt{(a(s))}}{2}\Vert AX'(s)\Vert^2\\
			&-a(s)[\rho(\frac{\sigma}{\gamma}-\frac{p(s)}{2}-\frac{\sigma^2\beta(t(s))}{2\gamma^2\sqrt{a(s)}})-\frac{\tau}{4}]\Vert AX(s)-b\Vert^2-(\frac{\sigma p'(s)}{4\gamma}-\tau C_{\mathrm{pos}})\Vert X(s)-x^*\Vert^2\\
			&+\frac{4\tau}{a(s)}\Vert x^*\Vert^2-\frac{\sigma}{\gamma}\langle X(s), X(s)-x^*\rangle.
		\end{aligned}
	\end{equation*}
    For the position term, write
   \[
   -\frac{\sigma}{\gamma}\langle X(s),X(s)-x^*\rangle
   =
   -\frac{\sigma}{\gamma}\Vert X(s)-x^*\Vert^2
   -\frac{\sigma}{\gamma}\langle x^*,X(s)-x^*\rangle.
   \]
   Then,
	\begin{equation*}\label{second_energy_estimate}
	\begin{aligned}
		\widetilde{\mathcal{E}}'(s)\leq & -[(\frac{\sigma}{\gamma}-p(s))a(s)-8L\tau](\mathcal{L}(X(s),\lambda^\dagger)-\mathcal{L}(x^*,\lambda^\dagger))-\frac{\tau}{4}\Vert A^\top(\Lambda(s)-\lambda^\dagger)\Vert^2\\
		&-(\delta-\frac{\sigma}{\gamma}-\frac{p(s)}{2}-\frac{\sigma L}{2\gamma}\beta^2(t(s))-\tau C_{\mathrm{vel}})\Vert X'(s)\Vert^2-\frac{\rho\beta(t(s))\sqrt{(a(s))}}{2}\Vert AX'(s)\Vert^2\\
		&-a(s)[\rho(\frac{\sigma}{\gamma}-\frac{p(s)}{2}-\frac{\sigma^2\beta(t(s))}{2\gamma^2\sqrt{a(s)}})-\frac{\tau}{4}]\Vert AX(s)-b\Vert^2\\
		&-(\frac{\sigma p'(s)}{4\gamma}-\tau C_{\mathrm{pos}}+\frac{\sigma}{\gamma})\Vert X(s)-x^*\Vert^2
		+\frac{4\tau}{a(s)}\Vert x^*\Vert^2  -\frac{\sigma}{\gamma}\langle x^*,X(s)-x^*\rangle.
	\end{aligned}
\end{equation*}
	For the coefficient of the gap term, \eqref{bounded-a-condition} and the first condition in \eqref{bounded-conditions} give
	\[
	-\left[(\frac{\sigma}{\gamma}-p(s))a(s)-8L\tau\right]\leq -\frac{\sigma/\gamma-p(s)}{2}a(s)\leq -\frac{\eta_1}{2}a(s).
	\]
	For the coefficient of the velocity term, \eqref{tau-explicit} and
	the second condition in \eqref{bounded-conditions} give
	\[
	-\left(\delta-\frac{\sigma}{\gamma}-\frac{p(s)}{2}-\frac{\sigma L}{2\gamma}\beta^2(t(s))-\tau C_{\mathrm{vel}}\right)\leq -\frac{1}{2}\left[
	\delta-\frac{\sigma}{\gamma}-\frac p2
	-\frac{\sigma L}{2\gamma}\beta(t(s))^2
	\right]\leq -\frac{\eta_2}{2}.
	\]
	For the coefficient of the feasibility violation term, $p<\sigma/\gamma$, the boundedness of $\beta(t)$, \eqref{bounded-a-condition} and the choice of $\tau$ yield that
	\[
	-a(s)\left[\rho(\frac{\sigma}{\gamma}-\frac{p(s)}{2}-\frac{\sigma^2\beta(t(s))}{2\gamma^2\sqrt{a(s)}})-\frac{\tau}{4}\right]\leq-\frac{\rho\sigma}{4\gamma}a(s).
	\]
	For the coefficient of the position term, the third condition in \eqref{bounded-conditions}, $p(s)<\sigma/\gamma$ and the choice of $\tau$ imply that
	\[
	-(\frac{\sigma p'(s)}{4\gamma}-\tau C_{\mathrm{pos}}+\frac{\sigma}{\gamma})\leq -\frac{\sigma}{2\gamma}.
	\]
	We next estimate the term $
	-\frac{\tau}{4}\left\|A^\top\bigl(\Lambda(s)-\lambda^\dagger\bigr)\right\|^2$.
    If $A\ne0$,  the unit sphere of the
	finite-dimensional space $\operatorname*{Ran} A$ is nonempty and compact.  The continuous function $u\longmapsto\Vert A^\top u\Vert$
	therefore attains its minimum on that sphere.  If this minimum were zero,
	there would exist $u\in\operatorname*{Ran} A$ such that $\Vert u\Vert=1$ and
	$A^\top u=0$.  This would imply
	\[
	u\in\operatorname*{Ran} A\cap\operatorname*{Ker} A^\top=\{0\},
	\]
	which contradicts $\Vert u\Vert=1$.  Hence  $\kappa_A>0$.  Moreover,
	\eqref{range-invariance} gives
	\[
	\Lambda(s)-\lambda^\dagger\in\operatorname*{Ran} A.
	\]
	Applying the definition of $\kappa_A$ to
	$\Lambda(s)-\lambda^\dagger$ gives
	\[
	\Vert A^\top(\Lambda(s)-\lambda^\dagger)\Vert
	\ge
	\kappa_A\Vert \Lambda(s)-\lambda^\dagger\Vert.
	\]
	Consequently,
	\begin{equation}\label{dual_estimate}
		-\frac{\tau}{4}
		\Vert A^\top(\Lambda-\lambda^\dagger)\Vert^2
		\le
		-\frac{\tau\kappa_A^2}{4}
		\Vert \Lambda-\lambda^\dagger\Vert^2.
	\end{equation}
If $A=0$, nonemptiness of the feasible set implies
	$b=0$.  The dual equation in \eqref{transformed-system} then reduces to
	$\Lambda'(s)=0$.  By the construction of the multiplier in Lemma~\ref{KKT-lemma},
	\[
	\lambda^\dagger=\lambda(t_0)=\Lambda(0),
	\]
	and hence
	\[
	\Lambda(s)-\lambda^\dagger=0
	\qquad(0\le s<s_\infty).
	\]
	Since $\kappa_A=1$ in this case, inequality~\eqref{dual_estimate} holds with
	$\kappa_A>0$ for every matrix $A\in\mathbb{R}^{m\times n}$. 
		\begin{equation}\label{W-boundedness-raw}
		\begin{aligned}
			\widetilde{\mathcal{E}}'(s)
			\le{}&
			-\frac{\eta_1}{2}a(s) (\mathcal{L}(X(s),\lambda^\dagger)-\mathcal{L}(x^*,\lambda^\dagger))
			-\frac{\eta_2}{2}\Vert X'(s)\Vert^2
			-\frac{\rho\sigma}{4\gamma}a(s)\Vert AX(s)-b\Vert^2\\
			&-\frac{\tau\kappa_A^2}{4}
			\Vert \Lambda(s)-\lambda^\dagger\Vert^2
			-\frac{\sigma}{2\gamma}\Vert X(s)-x^*\Vert^2
			+\frac{4\tau}{a(s)^2}\Vert x^*\Vert^2-\frac{\sigma}{\gamma}\langle x^*,X(s)-x^*\rangle.
		\end{aligned}
	\end{equation}
	Moreover, Young's inequality gives 
	\begin{equation*}
		-\frac{\sigma}{\gamma}\langle x^*,X(s)-x^*\rangle
		\le
		\frac{\sigma}{4\gamma}\Vert X(s)-x^*\Vert^2
		+\frac{\sigma}{\gamma}\Vert x^*\Vert^2.
	\end{equation*}
Thus, substituting all the preceding estimates into \eqref{W-boundedness-raw}, we obtain
	\begin{equation*}
		\begin{aligned}
		\widetilde{\mathcal{E}}'(s)
			\le{}&
			-\frac{\eta_1}{2}a(s) (\mathcal{L}(X(s),\lambda^\dagger)-\mathcal{L}(x^*,\lambda^\dagger))
			-\frac{\eta_2}{2}\Vert X'(s)\Vert^2
			-\frac{\rho\sigma}{4\gamma}a(s)\Vert AX(s)-b\Vert^2\\
			&-\frac{\tau\kappa_A^2}{4}
			\Vert \Lambda(s)-\lambda^\dagger\Vert^2
			-\frac{\sigma}{4\gamma}\Vert X(s)-x^*\Vert^2
			+\left(\frac{\sigma}{\gamma}+\frac{4\tau}{a(s)^2}\right)
			\Vert x^*\Vert^2.
		\end{aligned}
	\end{equation*}
Adding \(\omega\widetilde{\mathcal E}(s)\) to both sides of the
preceding inequality and using \eqref{energy-equivalence}, together
with \(\|u+v\|^2\leq2\|u\|^2+2\|v\|^2\), yields
\begin{equation*}
	\begin{aligned}
		\widetilde{\mathcal{E}}'(s)+\omega\widetilde{\mathcal{E}}(s)
		\leq&(-\frac{\eta_1}{2}+\frac{3\omega}{2})a(s)(\mathcal{L}(X(s),\lambda^\dagger)-\mathcal{L}(x^*,\lambda^\dagger))+(-\frac{\rho\sigma}{4\gamma}+\frac{3\rho\omega}{4})a(s)\Vert AX(s)-b\Vert^2\\
		&+(\frac{3\omega}{2}-\frac{\eta_2}{2})\Vert X'(s)\Vert^2+(\frac{3\sigma\omega}{4\gamma^2}-\frac{\tau\kappa_A^2}{4})\Vert \Lambda(s)-\lambda^\dagger\Vert^2+(\frac{\sigma}{\gamma}+\frac{4\tau}{{a(s)}^2}+\frac{3\omega}{2})\Vert x^*\Vert^2\\
		&+(\frac{3\omega\sigma}{4\gamma}(\delta-\frac{\sigma}{\gamma}-\frac{p(s)}{2})-\frac{\sigma}{4\gamma}+\frac{3\omega}{2}+\frac{3\omega \sigma^2}{2\gamma^2})\Vert X(s)-x^*\Vert^2,
	\end{aligned}
\end{equation*}
which together with the choice of $\omega$ implies that for all $s\ge s_T$,
\begin{equation*}
	\begin{aligned}
		\widetilde{\mathcal{E}}'(s)+\omega\widetilde{\mathcal{E}}(s)\leq&
		-\frac{\eta_1}{4}a(s)(\mathcal{L}(X(s),\lambda^\dagger)-\mathcal{L}(x^*,\lambda^\dagger))-\frac{\eta_2}{4}\Vert X'(s)\Vert^2-\frac{\rho\sigma}{8\gamma}a(s)\Vert AX(s)-b\Vert^2\\
		&-\frac{\tau\kappa_A^2}{8}\Vert \Lambda(s)-\lambda^\dagger\Vert^2-\frac{\sigma}{8\gamma}\Vert  X(s)-x^*\Vert^2+(\frac{3\omega}{2}+\frac{\sigma}{\gamma}+4\tau)\Vert x^*\Vert^2\\
	\leq&C_0:=(\frac{3\omega}{2}+\frac{\sigma}{\gamma}+4\tau)\Vert x^*\Vert^2.
	\end{aligned}
\end{equation*}
	Multiplication by $e^{\omega s}$ and integration from $s_T$ to $s$ give
	\[
	\widetilde{\mathcal{E}}(s)
	\le
	e^{-\omega(s-s_T)}\widetilde{\mathcal{E}}(s_T)
	+\frac{C_0}{\omega}
	\left(1-e^{-\omega(s-s_T)}\right).
	\]
	Thus $\widetilde{\mathcal{E}}$ is bounded.  Moreover, it follows from \eqref{energy-equivalence} that
	$\mathcal E$ is bounded. Then, the definition of $\mathcal{E}$ implies that 	
	$X(\cdot), \Lambda(\cdot),
	X(\cdot)-x^*$ and $\Lambda(\cdot)-\lambda^\dagger$
	are bounded on $[s_T,\infty)$. Therefore, by using
	\[
	X'(s)
	=
	\left[X'(s)+\frac{\sigma}{\gamma}(X(s)-x^*)\right]
	-\frac{\sigma}{\gamma}(X(s)-x^*),
	\]
	we also obtain that $X'(\cdot)$ is bounded on $[s_T,\infty)$.
\end{proof}
\begin{remark}[One explicit admissible parameter choice]
	\label{explicit-parameter-choice-remark}
	
	Take
	\[
	\varepsilon(t)=\frac{c}{t},\qquad
	\beta(t)\equiv\beta_0,\qquad
	c,t_0,\delta,\rho,\gamma>0,\qquad
	\beta_0\geq0,
	\]
	and set $
	h:=\frac{\delta}{2+L\beta_0^2}$,
	$\sigma:=\gamma h$,
	$\eta_1:=\frac{h}{2}$,
	$\eta_2:=\frac{\delta}{4}.$
	With \(C_{\mathrm{vel}}\) and \(C_{\mathrm{pos}}\) evaluated at
	these parameters, define
	\[
	\begin{aligned}
		\tau
		&:=
		\frac12\min\left\{
		1,\,
		\frac{\eta_2}{2C_{\mathrm{vel}}},\,
		\frac{\sigma}{4\gamma C_{\mathrm{pos}}},\,
		\frac{\rho\sigma}{2\gamma}
		\right\},\\
		\omega
		&:=
		\frac12\min\left\{
		\frac{\eta_1}{6},\,
		\frac{\eta_2}{6},\,
		\frac{\sigma}{6\gamma},\,
		\frac{\tau\kappa_A^2\gamma^2}{6\sigma},\,
		\frac{\sigma/\gamma}
		{6\left(
			2\sigma^2/\gamma^2
			+
			(\sigma/\gamma)(\delta-\sigma/\gamma)
			+
			2
			\right)}
		\right\}.
	\end{aligned}
	\]
	In this case, $
	s(t)=2\sqrt{c}\bigl(\sqrt{t}-\sqrt{t_0}\bigr)$, $p(s(t))=\frac{1}{\sqrt{ct}}$,
	$p'(s(t))=-\frac{1}{2ct}$,
	$a(s(t))=\frac{t}{c}$.
	Define
	\[
	T:=
	\max\left\{
	t_0,\,
	\frac{4}{ch^2},\,
	\frac{4}{c\delta^2},\,
	\frac{1}{c},\,
	c,\,
	16c\rho^2\|A\|^2,\,
	\frac{16c\tau L}{\eta_1},\,
	\frac{2c\tau\gamma\|A\|}{\sqrt{\sigma}},\,
	\frac{16c\sigma^2\beta_0^2}{\gamma^2}
	\right\}
	\]
	and $
	s_T:=2\sqrt{c}\bigl(\sqrt{T}-\sqrt{t_0}\bigr).$
For \(s\geq s_T\), equivalently \(t(s)\geq T\), the identities
above and the definition of  \(T\) ensure
\eqref{bounded-conditions} and \eqref{bounded-a-condition}.
Hence, all the conditions of
Proposition~\ref{boundedness-proposition} are satisfied. The corresponding convergence results for other explicit
	coefficients are given in
	Section~\ref{explicit-coefficients-section}.
\end{remark}

The preceding energy estimates for the dynamical system \eqref{transformed-system} yield the
following fast convergence result for the original system
\eqref{original-system}.
\begin{theorem}
	\label{fast-theorem}
		Let $(x(t),\lambda(t))$ be a trajectory of the dynamical system \eqref{original-system}, $x^*=P_S(0)$ and choose \(\lambda^\dagger\) as in Lemma~\ref{KKT-lemma} so that $
		\lambda(t_0)-\lambda^\dagger\in\operatorname{Ran}A.$ Suppose that the assumptions of Proposition \ref{boundedness-proposition} hold.
	Then
	\begin{align}
	&	\mathcal{L}(x(t),\lambda^\dagger)-\mathcal{L}(x^*,\lambda^\dagger)=O(\varepsilon(t)),\nonumber\quad
	\Vert Ax(t)-b\Vert=O(\varepsilon(t)),\nonumber\\
		&|f(x(t))-f(x^*)|
	=O(\varepsilon(t)),\nonumber\quad
		\Vert \dot x(t)\Vert
		=O(\sqrt{\varepsilon(t)}).
	\nonumber
	\end{align}
\end{theorem}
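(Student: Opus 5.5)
The plan is to read off all four estimates from the boundedness of the main energy $\mathcal E$ established in Proposition~\ref{boundedness-proposition}, and then transfer them back to the original time variable via Proposition~\ref{time-change-lemma}.

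\textbf{Step 1: uniform energy bound.} Proposition~\ref{boundedness-proposition} gives $\widetilde{\mathcal E}(s)\le e^{-\omega(s-s_T)}\widetilde{\mathcal E}(s_T)+C_0/\omega$. Combined with \eqref{energy-equivalence}, this yields $\mathcal E(s)\le 2\sup\widetilde{\mathcal E}=:M<\infty$ on $[s_T,\infty)$. Every term of $\mathcal E$ in \eqref{main-energy} is nonnegative for $s\ge s_T$: the augmented Lagrangian gap is nonnegative by the saddle-point property, and $\delta-\sigma/\gamma-p/2>0$ by \eqref{energy-positive-condition}. Hence each term is bounded by $M$.

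\textbf{Step 2: decay of the individual quantities.} Write $\mathcal L_\rho(X,\lambda^\dagger)-\mathcal L_\rho(x^*,\lambda^\dagger)=[\mathcal L(X,\lambda^\dagger)-\mathcal L(x^*,\lambda^\dagger)]+\frac\rho2\|AX-b\|^2$. Both brackets are nonnegative, since $x^*$ minimizes the convex function $\mathcal L(\cdot,\lambda^\dagger)$. Therefore
\[
\mathcal L(X(s),\lambda^\dagger)-\mathcal L(x^*,\lambda^\dagger)\le \frac{M}{a(s)},
\qquad
\|AX(s)-b\|^2\le\frac{2M}{\rho a(s)}.
\]
Returning to $t$ with $a(s(t))=1/\varepsilon(t)$ gives the Lagrangian gap $O(\varepsilon(t))$. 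However, this only gives $\|Ax(t)-b\|=O(\sqrt{\varepsilon(t)})$, so a sharper argument is needed for feasibility.

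For the velocity, Proposition~\ref{boundedness-proposition} states that $X'$ is bounded. By \eqref{xdot-transform}, $\|\dot x(t)\|=\sqrt{\varepsilon(t)}\|X'(s(t))\|=O(\sqrt{\varepsilon(t)})$.

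\textbf{Step 3: feasibility at rate $\varepsilon$ (the main obstacle).} I would exploit the dual equation, written in the form
\[
A\Big(X+\tfrac{\gamma}{\sigma}X'\Big)-b=\frac{\Lambda'}{\gamma a}.
\]
Set $u(s):=AX(s)-b$. Then $u+\frac{\gamma}{\sigma}u'=\Lambda'/(\gamma a)$, which integrates to
\[
e^{\frac{\sigma}{\gamma}s}u(s)=e^{\frac{\sigma}{\gamma}s_T}u(s_T)+\frac{\sigma}{\gamma^2}\int_{s_T}^s e^{\frac{\sigma}{\gamma}r}\frac{\Lambda'(r)}{a(r)}\,dr .
\]
Integrating by parts moves the derivative onto $e^{\sigma r/\gamma}/a(r)$. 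Its derivative is $e^{\sigma r/\gamma}\frac{\sigma/\gamma-p(r)}{a(r)}$, and $\sigma/\gamma-p\in[\eta_1,\sigma/\gamma]$. The $\Lambda$ factor is handled using the boundedness of $\Lambda-\lambda^\dagger$ (replace $\Lambda$ by $\Lambda-\lambda^\dagger$ before integrating). This leaves the integral $\int_{s_T}^s e^{\sigma r/\gamma}/a(r)\,dr$.

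Bounding that integral by $C e^{\sigma s/\gamma}/a(s)$ requires $a$ to grow subexponentially relative to $e^{\sigma s/\gamma}$, which holds since $p\le\sigma/\gamma-\eta_1$. Concretely, $\frac{d}{dr}\big(e^{\sigma r/\gamma}/a\big)\ge \eta_1 e^{\sigma r/\gamma}/a$, so the integral is at most $\eta_1^{-1}e^{\sigma s/\gamma}/a(s)$. Hence $\|AX(s)-b\|=O(1/a(s))+O(e^{-\sigma s/\gamma})$. The same monotonicity shows $e^{-\sigma s/\gamma}\lesssim 1/a(s)$. Therefore $\|Ax(t)-b\|=O(\varepsilon(t))$.

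\textbf{Step 4: objective residual.} Finally, write
\[
f(x)-f(x^*)=[\mathcal L(x,\lambda^\dagger)-\mathcal L(x^*,\lambda^\dagger)]-\langle\lambda^\dagger,Ax-b\rangle .
\]
Both terms are $O(\varepsilon)$ by Steps 2 and 3, so $|f(x(t))-f(x^*)|\le O(\varepsilon)+\|\lambda^\dagger\|\,O(\varepsilon)=O(\varepsilon(t))$.
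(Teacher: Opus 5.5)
Your proposal is correct and follows the paper's proof. The paper reads the gap and velocity bounds off the bounded energy and treats feasibility through the dual equation with an integrating factor and an integration by parts. It uses the weighted residual $\Phi(s)=\frac{\gamma^2}{\sigma}a(s)(AX(s)-b)$ and the factor $e^{K(s)}$ with $K(s)=\int_{s_T}^s(\sigma/\gamma-p(u))\,du$. Since $e^{K(s)}=e^{\sigma(s-s_T)/\gamma}a(s_T)/a(s)$, this is your identity for $e^{\sigma s/\gamma}(AX(s)-b)$ up to a constant factor. The only cosmetic difference is that the paper uses the exact identity $\int_{s_T}^s e^{-(K(s)-K(u))}(\sigma/\gamma-p(u))\,du=1-e^{-K(s)}\le 1$ instead of your detour through $\eta_1^{-1}$, so that step needs only $\sigma/\gamma-p\ge 0$.
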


\begin{proof}
	It follows from the boundedness of $\mathcal{E}$ obtained from Proposition \ref{boundedness-proposition} and  $\mathcal{L}(X(s),\lambda^\dagger)-\mathcal{L}(x^*,\lambda^\dagger) \ge 0$ that there exists $C_{\mathcal E}>0$ such that
	\[
	0\le a(s)(\mathcal{L}(X(s),\lambda^\dagger)-\mathcal{L}(x^*,\lambda^\dagger))\leq a(s)(\mathcal{L}_{\rho}(X(s),\lambda^\dagger)-\mathcal{L}_{\rho}(x^*,\lambda^\dagger)) \le\mathcal E(s)\le C_{\mathcal E}.
	\]
	Thus
	\begin{equation}\label{gap-rate-s}
	\mathcal{L}(X(s),\lambda^\dagger)-\mathcal{L}(x^*,\lambda^\dagger)=O\left(\frac1{a(s)}\right).
	\end{equation}
	To estimate the feasibility residual, set
	\begin{equation}\label{weighted-residual}
		\Phi(s):=
		\frac{\gamma^2}{\sigma}a(s)(AX(s)-b).
	\end{equation}
	Since $a'(s)=p(s)a(s)$,
	\[
	\begin{aligned}
		\Phi'(s)=
		\frac{\gamma^2}{\sigma}p(s)a(s)(AX(s)-b)
		+\frac{\gamma^2}{\sigma}a(s)AX'(s).
	\end{aligned}
	\]
	On the other hand, the dual equation in \eqref{transformed-system} is
	\[
	\Lambda'(s)
	=
	\gamma a(s)(AX(s)-b)
	+\frac{\gamma^2}{\sigma}a(s)AX'(s).
	\]
	Subtracting the preceding two identities and using
	\eqref{weighted-residual} gives
	\begin{equation}\label{weighted-residual-ODE}
		\Phi'(s)
		+\left(\frac{\sigma}{\gamma}-p(s)\right)\Phi(s)
		=
		\Lambda'(s).
	\end{equation}
	Define
	\begin{equation}\label{integrating-factor-K1}
		K(s):=
		\int_{s_T}^{s}
		\left(\frac{\sigma}{\gamma}-p(u)\right)\,du.
	\end{equation}
	The first condition in \eqref{bounded-conditions} implies that $K(\cdot)$ is nondecreasing.
	Multiplying \eqref{weighted-residual-ODE} by $e^{K(s)}$ and integrating
	from $s_T$ to $s$ yields
	\[
	\begin{aligned}
		\Phi(s)
		={}
		e^{-K(s)}\Phi(s_T)
		+\int_{s_T}^{s}
		e^{-(K(s)-K(u))}\Lambda'(u)\,du.
	\end{aligned}
	\]
	Integrating the last term by parts gives the exact identity
	\begin{equation}\label{weighted-residual-representation}
		\begin{aligned}
			\Phi(s)
			={}
			e^{-K(s)}\Phi(s_T)
			+\Lambda(s)-e^{-K(s)}\Lambda(s_T)
			-\int_{s_T}^{s}
			e^{-(K(s)-K(u))}
			\left(\frac{\sigma}{\gamma}-p(u)\right)\Lambda(u)\,du.
		\end{aligned}
	\end{equation}
	Let
	\[
	M_\Lambda:=
	\sup_{u\ge s_T}\Vert \Lambda(u)\Vert<\infty.
	\]
	Since $\sigma/\gamma-p(u)\ge0$,
	\[
	\begin{aligned}
		\int_{s_T}^{s}
		e^{-(K(s)-K(u))}
		\left(\frac{\sigma}{\gamma}-p(u)\right)\,du
		=
		1-e^{-K(s)}
		\le1.
	\end{aligned}
	\]
	It follows from \eqref{weighted-residual-representation} that
	\[
	\Vert \Phi(s)\Vert
	\le
	\Vert \Phi(s_T)\Vert+3M_\Lambda
	\qquad(s\ge s_T).
	\]
	Thus $\Phi$ is bounded. It follows from \eqref{weighted-residual} that
	\begin{equation}\label{residual-rate-s}
		\Vert AX(s)-b\Vert
		=
		O\left(\frac1{a(s)}\right).
	\end{equation}
	Since $a(s(t))=1/\varepsilon(t)$ and $X(s(t))=x(t)$,
	\eqref{gap-rate-s} and \eqref{residual-rate-s} give
	\[
		\mathcal{L}(x(t),\lambda^\dagger)-\mathcal{L}(x^*,\lambda^\dagger)=O(\varepsilon(t)),
	\quad
\text{and}\quad	\Vert Ax(t)-b\Vert=O(\varepsilon(t)).
	\]
	The identity
	\[
	f(x(t))-f(x^*)
	=
	\mathcal{L}(x(t),\lambda^\dagger)-\mathcal{L}(x^*,\lambda^\dagger)
	-\langle \lambda^\dagger,Ax(t)-b\rangle
	\]
	then gives
	\[
	|f(x(t))-f(x^*)|
	\le
	\mathcal{L}(x(t),\lambda^\dagger)-\mathcal{L}(x^*,\lambda^\dagger)
	+\Vert \lambda^\dagger\Vert\Vert Ax(t)-b\Vert
	=
	O(\varepsilon(t)).
	\]
	Finally, the boundedness of
	$X'(\cdot)+\frac{\sigma}{\gamma}(X(\cdot)-x^*)$ and $X(\cdot)-x^*$ implies that
	$X'(\cdot)$ is bounded.  By \eqref{xdot-transform},
	\[
	\dot x(t)
	=
	\sqrt{\varepsilon(t)}\,X'(s(t)),
	\]
	which proves $\Vert \dot{x}(t)\Vert=\mathcal{O}(\sqrt{\varepsilon}(t))$.
\end{proof}

\section{Strong convergence without a ball condition and improved rates}\label{strong convergence}
In this section, without imposing any eventual inside/outside-ball
condition, we prove that the whole primal trajectory converges strongly
to the minimum-norm solution \(x^*=P_S(0)\), while the dual trajectory
converges strongly to the compatible KKT multiplier
\(\lambda^\dagger\). Under the same assumptions, we further improve the
\(O\)-estimates of Theorem~\ref{fast-theorem} to the corresponding
little-\(o\) rates and establish the convergence of the implicit
evaluation point.

We first establish the strong convergence of the primal--dual trajectory
to \((x^*,\lambda^\dagger)\), where \(x^*\) is the minimum-norm primal
solution.
\begin{theorem}
	\label{strong-theorem}
	Let \((x,\lambda)\) be a trajectory of the dynamical system
	\eqref{original-system}. Let \(x^*=P_S(0)\), and choose
	\(\lambda^\dagger\) as in Lemma~\ref{KKT-lemma} so that
	\[
	\lambda(t_0)-\lambda^\dagger\in\operatorname{Ran}A.
	\]
	Suppose that the assumptions of
	Theorem~\ref{fast-theorem} hold. Then
	\[
	x(t)\to x^*
	\qquad\text{and}\qquad
	\lambda(t)\to\lambda^\dagger
	\qquad\text{as }t\to\infty.
	\]
\end{theorem}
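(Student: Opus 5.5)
The plan is to reuse the corrected energy $\widetilde{\mathcal E}=\mathcal E+\tau W$ of Proposition~\ref{boundedness-proposition} on the rescaled system \eqref{transformed-system}, but without applying Young's inequality to the linear term $-\frac{\sigma}{\gamma}\langle x^*,X(s)-x^*\rangle$. That term is where the minimum-norm property of $x^*$ enters, and it replaces any inside/outside-ball assumption. Throughout I work on $[s_T,\infty)$, where $s_\infty=\infty$ by Proposition~\ref{boundedness-proposition}, and transfer back at the end via $x(t)=X(s(t))$ and $\lambda(t)=\Lambda(s(t))$ from Proposition~\ref{time-change-lemma}.

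\emph{Step 1: cluster points lie in $S$.} By Proposition~\ref{boundedness-proposition}, $X$ is bounded. By Theorem~\ref{fast-theorem}, $f(X(s))\to f(x^*)$ and $AX(s)\to b$. Every cluster point $\bar x$ of $X(s)$ as $s\to\infty$ is therefore feasible with $f(\bar x)=\min_{\mathcal F}f$, so $\bar x\in S$. Since $x^*=P_S(0)$, we have $\langle x^*,\bar x-x^*\rangle\ge0$. A compactness argument in finite dimensions then gives $\liminf_{s\to\infty}\langle x^*,X(s)-x^*\rangle\ge0$. Hence for every $\eta>0$ there is $s_\eta$ with
\[
-\tfrac{\sigma}{\gamma}\langle x^*,X(s)-x^*\rangle\le\eta
\quad\text{and}\quad
\langle x^*,X(s)-x^*\rangle\ge-\eta
\qquad(s\ge s_\eta).
\]

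\emph{Step 2: split off the linear part and get a differential inequality.} Using $\tfrac12\|X\|^2=\tfrac12\|x^*\|^2+\tfrac12\|X-x^*\|^2+\langle x^*,X-x^*\rangle$, write
\[
\widetilde{\mathcal E}(s)=\tfrac12\|x^*\|^2+F(s)+\langle x^*,X(s)-x^*\rangle .
\]
Here $F$ collects the gap term, the quadratic terms in $X'+\frac\sigma\gamma(X-x^*)$, $X-x^*$ and $\Lambda-\lambda^\dagger$, and $\tau W$. By the estimate \eqref{tauW_estimate} and the positivity of the coefficient $\delta-\sigma/\gamma-p/2\ge\eta_2$, there is $c_1>0$ with
\[
F\ge c_1\bigl(\|X-x^*\|^2+\|X'\|^2+\|\Lambda-\lambda^\dagger\|^2\bigr)+\tfrac{a}{2}\bigl(\mathcal L_\rho(X,\lambda^\dagger)-\mathcal L_\rho(x^*,\lambda^\dagger)\bigr).
\]
Since $F$ is also bounded above by a multiple of the same quantities, the dissipation in \eqref{W-boundedness-raw} (before the Young step) dominates $cF$ for some $c>0$. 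Here one needs $a(s)\ge1$ and boundedness of $p(s)$ to compare the $a$-weighted gap and feasibility terms. Set $\varphi:=\widetilde{\mathcal E}-\frac12\|x^*\|^2=F+\langle x^*,X-x^*\rangle$. Then, for $s\ge s_\eta$,
\[
\varphi'(s)\le-cF(s)+\eta+\frac{4\tau}{a(s)^2}\|x^*\|^2\le -cF(s)+2\eta,
\]
after enlarging $s_\eta$ using $a(s)\to\infty$.

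\emph{Step 3: comparison and conclusion.} Since $\varphi\le F+\|x^*\|\,c_1^{-1/2}\sqrt F$, we get $F\ge h(\varphi_+)$ for an increasing function $h$ with $h(0)=0$. This yields the scalar inequality $\varphi'\le 2\eta-c\,h(\varphi_+)$. A standard comparison argument then gives $\limsup\varphi\le h^{-1}(2\eta/c)$. On the other side, Step 1 gives $\varphi\ge F-\eta\ge-\eta$ eventually. Letting $\eta\downarrow0$ yields $\varphi\to0$, and then $F\le\varphi+\eta$ gives $F\to0$. Consequently $\|X(s)-x^*\|\to0$ and $\|\Lambda(s)-\lambda^\dagger\|\to0$, which proves Theorem~\ref{strong-theorem} after returning to $t$.

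The main obstacle is Step 2. One must check carefully that, once the term $\langle x^*,X-x^*\rangle$ is removed, all remaining dissipation in \eqref{main-energy-derivative}--\eqref{cross-upper} really controls $F$, including the cross term $\tau W$ and the $p'$-term handled by the third condition in \eqref{bounded-conditions}. One must also check that the only non-decaying source is the sign-controlled linear term, which Step 1 tames.
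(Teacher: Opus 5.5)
Your proof is correct and follows the paper's architecture. You shift the corrected energy by $\frac12\|x^*\|^2$. You use that cluster points lie in $S$, together with $\langle x^*,y-x^*\rangle\ge0$ on $S$, to make the negative part of $\langle x^*,X(s)-x^*\rangle$ vanish. You rely on the same dissipation estimate \eqref{W-boundedness-raw} and the same coercivity coming from \eqref{tauW_estimate}.

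The difference lies only in how the differential inequality is closed.

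\emph{The paper's closing.} It estimates $\widehat{\mathcal E}'+\vartheta\widehat{\mathcal E}$ with small $\vartheta<\sigma/\gamma$. The term $\vartheta\langle x^*,X-x^*\rangle$ inside $\vartheta\widehat{\mathcal E}$ combines with the dissipative $-\frac{\sigma}{\gamma}\langle x^*,X-x^*\rangle$ into $-(\frac{\sigma}{\gamma}-\vartheta)\langle x^*,X-x^*\rangle$. This is bounded by its vanishing negative part, so a linear Gr\"onwall argument gives $\limsup\widehat{\mathcal E}\le0$. The bound $\widehat{\mathcal E}\ge\frac12\|X-x^*\|^2+\langle x^*,X-x^*\rangle$ together with the $\liminf$ then yields $X\to x^*$, and convergence of $\Lambda$ follows afterwards.

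\emph{Your closing.} You bound the linear term by $\eta$ immediately, which leaves $\varphi'\le-cF+2\eta$. Here $F$ and $\varphi$ are not linearly comparable, so you need the nonlinear comparison $F\ge h(\varphi_+)$, with $h^{-1}(y)=y+\|x^*\|c_1^{-1/2}\sqrt y$. That comparison argument is sound. In exchange, you obtain $\varphi\to0$ and $F\to0$, hence both $X\to x^*$ and $\Lambda\to\lambda^\dagger$, in a single pass.

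The nonlinear step is avoidable within your own setup. Before bounding the linear term, write $-cF=-c\varphi+c\langle x^*,X-x^*\rangle$ with $c\le\sigma/\gamma$. This gives $\varphi'+c\varphi\le(\frac{\sigma}{\gamma}-c)\max\{-\langle x^*,X-x^*\rangle,0\}+4\tau a^{-2}\|x^*\|^2$, which is exactly the paper's linear inequality with $\vartheta=c$.
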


\begin{proof}
	It follows from the conclusions of Proposition \ref{boundedness-proposition} and Theorem~\ref{fast-theorem} that
	\begin{equation}\label{gap-residual-zero}
	X(\cdot)\text{ is bounded},\quad	\mathcal{L}(X(s),\lambda^\dagger)-\mathcal{L}(x^*,\lambda^\dagger)\to0,
		\quad\text{and}\quad
		AX(s)-b\to0.
	\end{equation}
	Let $\overline x$ be a cluster point of $X$. Then,  there is a sequence
	$s_k\to\infty$ such that $X(s_k)\to\overline x$.  Passing to the limit in
	\eqref{gap-residual-zero} gives
	\[
	A\overline x=b.
	\]
	Furthermore,
	\[
	\begin{aligned}
		f(\overline x)-f(x^*)
		=
		\lim_{k\to\infty}
		\left[ \mathcal{L}(X(s_k),\lambda^\dagger)-\mathcal{L}(x^*,\lambda^\dagger)
		-\langle \lambda^\dagger,AX(s_k)-b\rangle
		\right]
		=0.
	\end{aligned}
	\]
	Hence every cluster point of $X$ belongs to $S$. Moreover, the projection identity $x^*=P_S(0)$ gives
	\begin{equation}\label{projection-inequality}
		\langle x^*,y-x^*\rangle\ge0
		\qquad(y\in S).
	\end{equation}
	We claim that
	\begin{equation}\label{projection-liminf}
		\liminf_{s\to\infty}
		\langle x^*,X(s)-x^*\rangle\ge0.
	\end{equation}
	If this were false, there would be $\eta>0$ and a sequence $s_k\to\infty$
	such that
	\[
	\langle x^*,X(s_k)-x^*\rangle\le-\eta.
	\]
	The bounded sequence $X(s_k)$ would have a convergent subsequence whose
	limit $\overline x$ belongs to $S$.  Passing to the limit would give
	\[
	\langle x^*,\overline x-x^*\rangle\le-\eta,
	\]
	contradicting \eqref{projection-inequality}. Thus,  this proves
	\eqref{projection-liminf}.  In particular,
	\begin{equation}\label{negative-part}
		\max\left\{
		-\langle x^*,X(s)-x^*\rangle,0
		\right\}\longrightarrow0.
	\end{equation}
	Define
	\begin{equation*}\label{shifted-W}
		\widehat{\mathcal{E}}(s)
		:=
		\widetilde{\mathcal{E}}(s)-\frac12\Vert x^*\Vert^2,
	\end{equation*}
where $\widetilde{\mathcal{E}}(s)$ is constructed by \eqref{corrected-energy}.
	Using
	\[
	\frac12\Vert X(s)\Vert^2-\frac12\Vert x^*\Vert^2
	=
	\frac12\Vert X(s)-x^*\Vert^2
	+\langle x^*,X(s)-x^*\rangle,
	\]
	we obtain the exact decomposition
	\begin{equation}\label{shifted-W-decomposition}
		\begin{aligned}
			\widehat{\mathcal{E}}(s)
			={}&
			a(s)(
		\mathcal{L}_{\rho}(X(s),\lambda^\dagger)-\mathcal{L}_{\rho}(x^*,\lambda^\dagger))
			+\frac12
			\Vert X'(s)+\frac{\sigma}{\gamma}(X(s)-x^*)\Vert^2
			+\frac{\sigma}{2\gamma^2}\Vert \Lambda(s)-\lambda^\dagger\Vert^2\\
			&+\frac{\tau}{a(s)}
			\langle \Lambda(s)-\lambda^\dagger,A[X'(s)+\frac{\sigma}{\gamma}(X(s)-x^*)]\rangle\\
			&+\frac12
			\left[
			1+\frac{\sigma}{\gamma}
			\left(\delta-\frac{\sigma}{\gamma}-\frac {p(s)}2\right)
			\right]\Vert X(s)-x^*\Vert^2
			+\langle x^*,X(s)-x^*\rangle.
		\end{aligned}
	\end{equation}
	 Since, by using \eqref{tauW_estimate}, we obtain
	\begin{equation}\label{block-coercivity}
		\begin{aligned}
			&\frac14
			\Vert X'(s)+\frac{\sigma}{\gamma}(X(s)-x^*)\Vert^2
			+\frac{\sigma}{4\gamma^2}\Vert \Lambda(s)-\lambda^\dagger\Vert^2\\
			&\le
			\frac12\Vert X'(s)+\frac{\sigma}{\gamma}(X(s)-x^*)\Vert^2
			+\frac{\sigma}{2\gamma^2}\Vert \Lambda(s)-\lambda^\dagger\Vert^2
			+\frac{\tau}{a(s)}
			\langle \Lambda(s)-\lambda^\dagger,A[X'(s)+\frac{\sigma}{\gamma}(X(s)-x^*)]\rangle\\
			&
			\leq \frac{3}{2}\Vert X'(s)\Vert^2+\frac{3\sigma^2}{2\gamma^2}\Vert X(s)-x^*\Vert^2+\frac{3\sigma}{4\gamma^2}\Vert\Lambda(s)-\lambda^\dagger\Vert^2.
		\end{aligned}
	\end{equation}
	Thus, for all $\vartheta>0$, we have
	\begin{equation*}
		\begin{aligned}
			&\widehat{\mathcal{E}}'(s)+\vartheta\widehat{\mathcal{E}}(s)\\
			\overset{\eqref{W-boundedness-raw},\eqref{block-coercivity}}{\leq}&-(\frac{\eta_1}{2}-\vartheta)a(s)(	\mathcal{L}(X(s),\lambda^\dagger)-\mathcal{L}(x^*,\lambda^\dagger))-(\frac{\eta_2}{2}-\frac{3\vartheta}{2})\Vert X'(s)\Vert^2\\
			&-(\frac{\tau\kappa_A^2}{4}-\frac{3\vartheta \sigma}{4\gamma^2})\Vert \Lambda(s)-\lambda^\dagger\Vert^2-(\frac{\sigma}{\gamma}-\vartheta)\langle x^*, X(s)-x^*\rangle+\frac{4\tau}{a(s)^2}\Vert x^*\Vert^2\\
			&-(\frac{\sigma}{2\gamma}-\frac{\vartheta}{2}(\frac{\sigma}{\gamma}(\delta-\frac{\sigma}{\gamma}-\frac{p(s)}{2})+1)-\frac{3\vartheta\sigma^2}{2\gamma^2})\Vert X(s)-x^*\Vert^2
			-(\frac{\rho\sigma}{4\gamma}-\frac{\vartheta \rho}{2})a(s)\Vert AX(s)-b\Vert^2.
		\end{aligned}
	\end{equation*}
	Now, choosing explicitly
	\begin{equation*}\label{vartheta-choice}
		\begin{aligned}
			0<\vartheta\le\min\biggl\{
			\frac{\eta_1}{4},\,
			\frac{\eta_2}{6},\,
			\frac{\sigma}{4\gamma},\,
			\frac{\tau\kappa_A^2\gamma^2}{6\sigma},
			\frac{\sigma/\gamma}
			{2\left(3\sigma^2/\gamma^2
				+(\sigma/\gamma)(\delta-\sigma/\gamma)+1\right)}
			\biggr\}.
		\end{aligned}
	\end{equation*}
implies that
\begin{equation*}
	\widehat{\mathcal{E}}'(s)+\vartheta\widehat{\mathcal{E}}(s)\leq \frac{4\tau}{a(s)^2}\Vert x^*\Vert^2-(\frac{\sigma}{\gamma}-\vartheta)\langle x^*,X(s)-x^*\rangle,
\end{equation*}
which together with 
	\[
	\begin{aligned}
		-\left(\frac{\sigma}{\gamma}-\vartheta\right)
		\langle x^*,X-x^*\rangle\le
		\left(\frac{\sigma}{\gamma}-\vartheta\right)
		\max\left\{
		-\langle x^*,X-x^*\rangle,0
		\right\}.
	\end{aligned}
	\]
	yields
	\begin{equation}\label{shifted-W-scalar}
		\begin{aligned}
			\widehat{\mathcal E}'(s)
			+\vartheta\widehat{\mathcal{E}}(s)
			\le{}
			\left(\frac{\sigma}{\gamma}-\vartheta\right)
			\max\left\{
			-\langle x^*,X-x^*\rangle,0
			\right\}
			+\frac{4\tau}{a(s)^2}\Vert x^*\Vert^2.
		\end{aligned}
	\end{equation}
It follows from \eqref{negative-part} and $a(s)\to\infty$ that
	the right-hand side of \eqref{shifted-W-scalar} tends to zero.   Then, for any
	$\eta>0$,  there exists $S_\eta\ge s_T$ such that, for every
	$u\ge S_\eta$,
	\[
	\begin{aligned}
		&\left(\frac{\sigma}{\gamma}-\vartheta\right)
		\max\left\{
		-\langle x^*,X(u)-x^*\rangle,0
		\right\}
		+\frac{4\tau}{a(u)^2}\Vert x^*\Vert^2
		\le\eta.
	\end{aligned}
	\]
	Multiplying \eqref{shifted-W-scalar} by $e^{\vartheta s}$ and
	integrating from $S_\eta$ to $s\ge S_\eta$ give
	\[
	\begin{aligned}
		\widehat{\mathcal E}(s)
		\le{}&
		e^{-\vartheta(s-S_\eta)}
		\widehat{\mathcal E}(S_\eta)\\
		&+\int_{S_\eta}^{s}e^{-\vartheta(s-u)}
		\left[
		\left(\frac{\sigma}{\gamma}-\vartheta\right)
		\max\left\{
		-\langle x^*,X(u)-x^*\rangle,0
		\right\}
		+\frac{4\tau}{a(u)^2}\Vert x^*\Vert^2
		\right]du\\
		\le{}&
		e^{-\vartheta(s-S_\eta)}
		\widehat{\mathcal E}(S_\eta)
		+\eta\int_{S_\eta}^{s}e^{-\vartheta(s-u)}\,du\\
		={}&
		e^{-\vartheta(s-S_\eta)}
		\widehat{\mathcal E}(S_\eta)
		+\frac{\eta}{\vartheta}
		\left(1-e^{-\vartheta(s-S_\eta)}\right).
	\end{aligned}
	\]
	Since $\vartheta>0$, letting $s\to\infty$ yields
	\[
	\limsup_{s\to\infty}\widehat{\mathcal E}(s)
	\le\frac{\eta}{\vartheta}.
	\]
	Moreover, since $\eta>0$ is arbitrary, it follows that
	\begin{equation}\label{W-limsup}
		\limsup_{s\to\infty}\widehat{\mathcal E}(s)\le0.
	\end{equation}
	In addition, \eqref{block-coercivity} and $\delta-\sigma/\gamma-p(s)/2>0$ imply that
	\[
	\widehat{\mathcal{E}}(s)
	\ge
	\frac12\Vert X(s)-x^*\Vert^2
	+\langle x^*,X(s)-x^*\rangle.
	\]
	Combining this inequality with \eqref{projection-liminf} and
	\eqref{W-limsup},
	\[
	\begin{aligned}
		\frac12\limsup_{s\to\infty}\Vert X(s)-x^*\Vert^2
		\le
		\limsup_{s\to\infty}\widehat{\mathcal{E}}(s)
		-\liminf_{s\to\infty}
		\langle x^*,X(s)-x^*\rangle
		\le0.
	\end{aligned}
	\]
	Thus
	\[
	X(s)\to x^*,
	\]
	which yields that
	\[
	\langle x^*,X(s)-x^*\rangle\to0.
	\]
	The decomposition \eqref{shifted-W-decomposition} then implies
	\[
	\liminf_{s\to\infty}\widehat{\mathcal{E}}(s)\ge0.
	\]
	Together with \eqref{W-limsup}, this yields
	\[
	\widehat{\mathcal{E}}(s)\to0.
	\]
	Moreover, \eqref{shifted-W-decomposition} and
	\eqref{block-coercivity}, together with the nonnegativity of the
	remaining terms, yield
	\[
	\frac{\sigma}{4\gamma^2}
	\Vert\Lambda(s)-\lambda^\dagger\Vert^2
	\leq
	\widehat{\mathcal E}(s)
	-\langle x^*,X(s)-x^*\rangle.
	\]
	Since $\widehat{\mathcal E}(s)\to0$ and $\langle x^*,X(s)-x^*\rangle\to0$
	it follows that
	\begin{equation}\label{dual-zero}
		\Lambda(s)-\lambda^\dagger\longrightarrow0.
	\end{equation}
	Finally, since \(s(t)\to\infty\), returning to the original time
	variable gives the conclusion.
\end{proof}
\begin{remark}
		\label{no-ball-condition-remark}
		For several Tikhonov-regularized inertial systems and their primal--dual
		extensions, the analysis without an additional geometric assumption yields
		only $\liminf_{t\to\infty}\Vert x(t)-x^*\Vert=0;$
		see
		\cite{Bot MP 2021,Alecsa SIAM JO 2021,Attouch MMOR 2024,
			Zhu JCAM 2025,Li JOTA 2025}.
		This conclusion guarantees at most the existence of a sequence
		\(t_k\to\infty\) such that \(x(t_k)\to x^*\), but does not ensure convergence
		of the whole trajectory.  In the cited works, the latter is generally
		obtained by additionally assuming that, for all sufficiently large \(t\),
		the trajectory remains either inside or outside the ball
		\(B(0,\Vert x^*\Vert)\), namely, $	\Vert x(t)\Vert<\Vert x^*\Vert\text{ or }
		\Vert x(t)\Vert\ge\Vert x^*\Vert.$
		Theorem~\ref{strong-theorem} removes this eventual inside/outside-ball
		condition: under the same parameter assumptions as those used to establish
		the fast convergence rates, the whole trajectory generated by
		\eqref{original-system} satisfies $	\lim_{t\to\infty}\Vert x(t)-x^*\Vert=0.$
		Thus, the strong convergence result requires no prior information on the
		eventual position of the trajectory relative to
		\(B(0,\Vert x^*\Vert)\).
\end{remark}

The strong-convergence result allows the estimates of
Theorem~\ref{fast-theorem} to be sharpened to little-\(o\) rates, as
stated in the following theorem.
\begin{theorem}[Improved convergence rates]
	\label{little-o-theorem}
	Under the assumptions of Theorem~\ref{strong-theorem}, the estimates of
	Theorem~\ref{fast-theorem} improve to
	\begin{equation*}
		\begin{aligned}
			&\Vert\dot{x}(t)\Vert=o\bigl(\sqrt{\varepsilon(t)}\bigr),
			\qquad
			\mathcal{L}(x(t),\lambda^\dagger)
			-\mathcal{L}(x^*,\lambda^\dagger)
			=o\bigl(\varepsilon(t)\bigr),\nonumber\\
			&\Vert Ax(t)-b\Vert=o\bigl(\varepsilon(t)\bigr),
			\qquad
			\bigl|f(x(t))-f(x^*)\bigr|
			=o\bigl(\varepsilon(t)\bigr).
		\end{aligned}
	\end{equation*}
\end{theorem}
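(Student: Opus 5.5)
The key observation is that the proof of Theorem~\ref{strong-theorem} already gives more than strong convergence. There we showed $\widehat{\mathcal E}(s)\to0$ and $X(s)\to x^*$, so $\langle x^*,X(s)-x^*\rangle\to0$ and $\Vert X(s)-x^*\Vert\to0$. I would subtract these two vanishing terms from the decomposition \eqref{shifted-W-decomposition}. All the remaining terms are nonnegative: use \eqref{block-coercivity} for the block containing the cross term, and note $\delta-\sigma/\gamma-p(s)/2>0$. Each of them therefore tends to zero. Concretely, this yields
\[
a(s)\bigl(\mathcal L_\rho(X(s),\lambda^\dagger)-\mathcal L_\rho(x^*,\lambda^\dagger)\bigr)\to0,
\qquad
\Bigl\Vert X'(s)+\frac{\sigma}{\gamma}(X(s)-x^*)\Bigr\Vert\to0 .
\]
Since $\mathcal L_\rho-\mathcal L=\frac\rho2\Vert AX-b\Vert^2\ge0$ and $\mathcal L(X,\lambda^\dagger)-\mathcal L(x^*,\lambda^\dagger)\ge0$, the first limit gives $a(s)\bigl(\mathcal L(X(s),\lambda^\dagger)-\mathcal L(x^*,\lambda^\dagger)\bigr)\to0$. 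The second limit, combined with $X(s)\to x^*$, gives $X'(s)\to0$. Returning to time $t$ via $a(s(t))=1/\varepsilon(t)$ and \eqref{xdot-transform}, these become the $o(\varepsilon(t))$ gap rate and the $o(\sqrt{\varepsilon(t)})$ velocity rate.

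The feasibility residual is the main obstacle. The energy only yields $a(s)\Vert AX(s)-b\Vert^2\to0$, which is $\Vert AX-b\Vert=o(a^{-1/2})$, and that is not enough. Instead I would reuse the representation \eqref{weighted-residual-representation} of $\Phi(s)=\frac{\gamma^2}{\sigma}a(s)(AX(s)-b)$, replacing $\Lambda$ by $\Lambda-\lambda^\dagger$ throughout. This replacement is legitimate because $\Lambda'$ is unchanged by subtracting a constant: redoing the integration by parts with $\Lambda(u)-\lambda^\dagger$ gives the same formula.

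The result is
\[
\Phi(s)=e^{-K(s)}\bigl(\Phi(s_T)-(\Lambda(s_T)-\lambda^\dagger)\bigr)+(\Lambda(s)-\lambda^\dagger)-\int_{s_T}^s e^{-(K(s)-K(u))}\Bigl(\frac\sigma\gamma-p(u)\Bigr)(\Lambda(u)-\lambda^\dagger)\,du .
\]
Each piece tends to zero, for the following reasons. By \eqref{bounded-conditions}, $K(s)\ge\eta_1(s-s_T)\to\infty$, so the first piece vanishes. The second piece vanishes by \eqref{dual-zero}. The third piece is an average of $\Lambda(u)-\lambda^\dagger$ against the kernel $e^{-(K(s)-K(u))}(\sigma/\gamma-p(u))$, which integrates to $1-e^{-K(s)}\le1$. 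A standard $\eta$--splitting argument handles it: on $[S_\eta,s]$ we have $\Vert\Lambda(u)-\lambda^\dagger\Vert\le\eta$, while the contribution from $[s_T,S_\eta]$ is bounded by $e^{-(K(s)-K(S_\eta))}\sup\Vert\Lambda-\lambda^\dagger\Vert\to0$. Hence $\Phi(s)\to0$, that is, $\Vert Ax(t)-b\Vert=o(\varepsilon(t))$.

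Finally, the objective residual follows exactly as in Theorem~\ref{fast-theorem}, from
\[
|f(x(t))-f(x^*)|\le\mathcal L(x(t),\lambda^\dagger)-\mathcal L(x^*,\lambda^\dagger)+\Vert\lambda^\dagger\Vert\,\Vert Ax(t)-b\Vert=o(\varepsilon(t)).
\]
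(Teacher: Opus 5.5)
Your proposal is correct and follows the paper's proof essentially step for step. You extract $a(s)\bigl(\mathcal L(X(s),\lambda^\dagger)-\mathcal L(x^*,\lambda^\dagger)\bigr)\to0$ and $X'(s)+\frac{\sigma}{\gamma}(X(s)-x^*)\to0$ from $\widehat{\mathcal E}(s)\to0$ via \eqref{shifted-W-decomposition} and \eqref{block-coercivity}, and then get $\Phi(s)\to0$ from the integrating-factor representation with $\Lambda-\lambda^\dagger$ in place of $\Lambda$ (the paper's \eqref{weighted-residual-little-o-representation}) together with an $\eta$-splitting of the averaging integral; the only differences, such as your explicit remark that $\mathcal L_\rho-\mathcal L=\frac{\rho}{2}\Vert AX-b\Vert^2\ge0$ and bounding the early part of the integral by $e^{-(K(s)-K(S_\eta))}\sup\Vert\Lambda-\lambda^\dagger\Vert$, are cosmetic.
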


\begin{proof}
	The proof of Theorem~\ref{strong-theorem} established
	\[
	\widehat{\mathcal{E}}(s)\to0
	\quad\text{and}\quad
	\langle x^*,X(s)-x^*\rangle\to0,
	\]
	which together with \eqref{block-coercivity} and $\delta-\sigma/\gamma-p(s)/2>0$ implies that
	\begin{align}
		a(s)	(\mathcal{L}(X(s),\lambda^\dagger)-\mathcal{L}(x^*,\lambda^\dagger))&\to0,\label{BG-zero}\\
		X'(s)+\frac{\sigma}{\gamma}(X(s)-x^*)
		&\to0.
		\label{velocity-block-zero}
	\end{align}
	Since $X(s)-x^*\to0$, \eqref{velocity-block-zero} gives
	$X'(s)\to0$.  Returning to the original time and using
	$\dot x(t)=\sqrt{\varepsilon(t)}\,X'(s(t))$, we obtain
	\[
	\frac{\Vert \dot x(t)\Vert}{\sqrt{\varepsilon(t)}}
	=\Vert X'(s(t))\Vert\to0,
	\]
	which proves $\|\dot x(t)\|=o\bigl(\sqrt{\varepsilon(t)}\bigr)$. Moreover, \eqref{BG-zero} and $a(s(t))=1/{\varepsilon(t)}$ give
	\[
 	\mathcal{L}(x(t),\lambda^\dagger)-\mathcal{L}(x^*,\lambda^\dagger)=o(\varepsilon(t)).
	\]
	It remains to improve the convergence rates of objective residual and  feasibility violation from
	$O(\varepsilon(t))$ to $o(\varepsilon(t))$.
By the first condition in \eqref{bounded-conditions}, the function
\(K\) defined in \eqref{integrating-factor-K1} satisfies
\[
K(s)\geq\eta_1(s-s_T)\to\infty.
\]
Since \(\lambda^\dagger\) is constant, multiplying
\eqref{weighted-residual-ODE} by \(e^{K(s)}\), integrating over
\([s_T,s]\), and then integrating by parts yield
\begin{equation}
	\label{weighted-residual-little-o-representation}
	\begin{aligned}
		\Phi(s)={}&
		e^{-K(s)}\Phi(s_T)
		+\Lambda(s)-\lambda^\dagger
		-e^{-K(s)}
		\bigl(\Lambda(s_T)-\lambda^\dagger\bigr)\\
		&-\int_{s_T}^{s}
		e^{-(K(s)-K(u))}
		\left(\frac{\sigma}{\gamma}-p(u)\right)
		\bigl(\Lambda(u)-\lambda^\dagger\bigr)\,du.
	\end{aligned}
\end{equation}
By \eqref{dual-zero} and $K(s)\to\infty$, the first three terms on the
	right-hand side of
	\eqref{weighted-residual-little-o-representation} tend to zero. It remains to treat the integral term.   For any $\eta>0$, it follows from \eqref{dual-zero} that there exists
	$S_\eta\geq s_T$ such that
	\[
	\left\|\Lambda(u)-\lambda^\dagger\right\|\leq\eta
	\qquad\text{for all }u\geq S_\eta.
	\]
	For $u\in[s_T,S_\eta]$,  the first condition in \eqref{bounded-conditions} gives
	\[
	K(s)-K(u)
	\ge
	\eta_1(s-S_\eta).
	\]
	Consequently,
	\[
	\begin{aligned}
		&\left\|
		\int_{s_T}^{S_\eta}
		e^{-(K(s)-K(u))}
		\left(\frac{\sigma}{\gamma}-p(u)\right)
		\bigl(\Lambda(u)-\lambda^\dagger\bigr)\,du
		\right\|\\
		&\qquad\le
		e^{-\eta_1(s-S_\eta)}
		\int_{s_T}^{S_\eta}
		\left(\frac{\sigma}{\gamma}-p(u)\right)
		\Vert \Lambda(u)-\lambda^\dagger\Vert\,du
		\longrightarrow0.
	\end{aligned}
	\]
	For the remaining part, since
	$\sigma/\gamma-p(u)\ge0$,
	\[
	\begin{aligned}
		&\left\|
		\int_{S_\eta}^{s}
		e^{-(K(s)-K(u))}
		\left(\frac{\sigma}{\gamma}-p(u)\right)
		\bigl(\Lambda(u)-\lambda^\dagger\bigr)\,du
		\right\|\\
		&\qquad\le
		\eta
		\int_{S_\eta}^{s}
		e^{-(K(s)-K(u))}
		\left(\frac{\sigma}{\gamma}-p(u)\right)\,du=
		\eta
		\left[
		1-e^{-(K(s)-K(S_\eta))}
		\right]
		\le\eta.
	\end{aligned}
	\]
	Since $\eta>0$ is arbitrary, the integral term in
	\eqref{weighted-residual-little-o-representation} tends to zero.  Hence
	\[
	\Phi(s)\longrightarrow0.
	\]
	Using the definition of $\Phi$ and $a(s(t))=1/\varepsilon(t)$, we obtain
	\[
	\begin{aligned}
		\frac{\Vert Ax(t)-b\Vert}{\varepsilon(t)}
		=
		a(s(t))\Vert AX(s(t))-b\Vert=
		\frac{\sigma}{\gamma^2}\Vert \Phi(s(t))\Vert
		\longrightarrow0,\quad\text{i.e.}\quad \Vert Ax(t)-b\Vert=o(\epsilon(t)),
	\end{aligned}
	\]
	which together with $\mathcal{L}(x(t),\lambda^\dagger)-\mathcal{L}(x^*,\lambda^\dagger)=o(\varepsilon(t))$ and 
	\[
	f(x(t))-f(x^*)
	=
	\mathcal{L}(x(t),\lambda^\dagger)-\mathcal{L}(x^*,\lambda^\dagger)
	-\langle \lambda^\dagger,Ax(t)-b\rangle
	\]
	implies that $\vert f(x(t))-f(x^*)\vert=o(\varepsilon(t))$.
\end{proof}
\begin{remark}[Improvement from big-$O$ to little-$o$ rates]
	\label{little-o-comparison-remark}
	Most existing Nesterov-type primal--dual dynamics for linearly constrained
	convex optimization provide big-$O$ estimates; see
	\cite{He SICON 2021,Bot JDE 2021,Zeng TAC 2023,
		He Applied Analysis 2023}.
	Although \cite{Li JOTA 2025} obtains a little-$o$ estimate for a
	Lagrangian-type gap, the corresponding little-$o$ estimates for the
	objective residual and feasibility violation are not combined with the strong convergence to the minimum-norm solution under the same decay assumptions.
	More recently, He, Huang, Xiao, and Fang
	\cite{He arXiv 2026} established \(o(t^{-2})\) objective  residual and
	feasibility violation rates and an \(o(t^{-1})\) velocity rate for an unregularized
	primal--dual system whose primal and dual equations are both second order;
	its trajectory converges to an unspecified primal--dual solution. By contrast, under the same assumptions as
	Theorem~\ref{strong-theorem}, the Tikhonov-regularized mixed-order system
	\eqref{original-system} satisfies
	\[
		\mathcal{L}(x(t),\lambda^\dagger)-\mathcal{L}(x^*,\lambda^\dagger),\quad
	|f(x(t))-f(x^*)|,\quad
	\Vert Ax(t)-b\Vert=o(\varepsilon(t)),
	\quad
	\Vert\dot x(t)\Vert=o(\sqrt{\varepsilon(t)}),
	\]
	while its primal trajectory converges to the minimum-norm solution of the problem \eqref{problem} and its
	dual equation remains first order.  For
	\(\varepsilon(t)=c/t^2\), these estimates reduce to \(o(t^{-2})\) and
	\(o(t^{-1})\), respectively. Thus, under the same
	assumptions that guarantee the strong convergence in
	Theorem~\ref{strong-theorem}, the big-$O$ estimates of
	Theorem~\ref{fast-theorem} are improved to strict little-$o$ estimates.
\end{remark}

\section{Particular case: explicit Tikhonov coefficients}\label{particular case}
\label{explicit-coefficients-section}

In this section, we specialize the general results to the power law
\(\varepsilon(t)=c/t^r\), \(0<r\le2\), and to a logarithmically
modified coefficient \(\varepsilon(t)=c(\log t)^q/t^2\). In particular, for the critical choice
\(\varepsilon(t)=c/t^2\), which gives Nesterov-type damping, we obtain
the strong convergence to the minimum-norm solution without an eventual ball condition,
\(o(t^{-2})\) rates for the Lagrangian gap, objective residual, and
feasibility violation, and an \(o(t^{-1})\) velocity rate.
Throughout this section, set
\[
\beta_\infty^2
:=\limsup_{t\to\infty}\beta(t)^2<\infty.
\]

\subsection{Power-law coefficients
	\texorpdfstring{\(\varepsilon(t)=c/t^r\text{ with } 0<r\leq 2\)}{epsilon(t)=c/t power r}}

\begin{corollary}
	\label{power-law-corollary}
	Assume that $f:\mathbb{R}^n\to\mathbb{R}$ is convex and continuously
	differentiable, with an $L$-Lipschitz continuous gradient and the solution set $S\neq\emptyset$. Set
	\[
	\varepsilon(t)=\frac{c}{t^r},
	\qquad c>0,\qquad 0<r\le2.
	\]
	Suppose either \(0<r<2\), or \(r=2\) and
	\begin{equation}\label{critical-power-condition}
		\delta\sqrt c>3+L\beta_\infty^2.
	\end{equation}
Then \(\sigma>0\) can be chosen such that the following holds.
For every trajectory \((x,\lambda)\) of
\eqref{original-system} corresponding to this choice of \(\sigma\),
let \(\lambda^\dagger\) be the compatible KKT multiplier selected
in Lemma~\ref{KKT-lemma}, so that $\lambda(t_0)-\lambda^\dagger\in\operatorname{Ran}A.$
Then, as \(t\to\infty\),
	\begin{align}
		x(t)&\longrightarrow x^*,
		&\lambda(t)&\longrightarrow\lambda^\dagger,\nonumber\\
		\mathcal{L}(x(t), \lambda^\dagger)-\mathcal{L}(x^*,\lambda^\dagger)&=o(t^{-r}),
		&\Vert Ax(t)-b\Vert&=o(t^{-r}),\nonumber\\
		\left|f(x(t))-f(x^*)\right|&=o(t^{-r}),
		&\Vert\dot x(t)\Vert&=o(t^{-r/2}).\nonumber
	\end{align}
\end{corollary}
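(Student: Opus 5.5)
The plan is to verify, for $\varepsilon(t)=c/t^r$, all hypotheses of Proposition~\ref{boundedness-proposition} (hence of Theorem~\ref{fast-theorem}) for a suitable $\sigma$. Then Theorems~\ref{strong-theorem} and~\ref{little-o-theorem} give every conclusion, since $\varepsilon(t)=c\,t^{-r}$ and $\sqrt{\varepsilon(t)}=\sqrt c\,t^{-r/2}$. The regularity requirements are immediate on $[t_0,\infty)$ with $t_0>0$: $\varepsilon\in C^2$, it is positive, nonincreasing and tends to $0$.

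\emph{Step 1: explicit formulas.} Using $p(s(t))=2\frac{d}{dt}\bigl(\varepsilon(t)^{-1/2}\bigr)$ and $\frac{d}{ds}=\varepsilon^{-1/2}\frac{d}{dt}$, compute
\[
p(s(t))=\frac{r}{\sqrt c}\,t^{r/2-1},\qquad
p'(s(t))=\frac{r(r-2)}{2c}\,t^{r-2},\qquad
a(s(t))=\frac{t^r}{c}.
\]
Hence $a(s)\to\infty$, and $s_\infty=\infty$ because $\int^\infty t^{-r/2}\,dt=\infty$ for $r\le2$. The limiting values are as follows:
\begin{itemize}
\item if $0<r<2$, then $p(s)\to0$ and $p'(s)\to0$;
\item if $r=2$, then $p\equiv 2/\sqrt c$ and $p'\equiv0$.
\end{itemize}
In both cases the third condition in \eqref{bounded-conditions} holds for large $s$. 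Its left side tends to $0$, or is identically $0$, while its right side is at least $\sigma/(8\gamma)>0$.

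\emph{Step 2: choice of $h:=\sigma/\gamma$.} Fix $\xi>0$ small. Since $\beta_\infty^2=\limsup\beta(t)^2$, we have $\beta(t(s))^2\le\beta_\infty^2+\xi$ for all large $s$.

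For the second condition of \eqref{bounded-conditions} it therefore suffices that, with $p_\infty:=\lim p$,
\[
\delta-h-\frac{p_\infty}{2}-\frac{hL}{2}(\beta_\infty^2+\xi)>0 .
\]
For the first condition it suffices that $h>p_\infty$.

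\begin{itemize}
\item \emph{Case $r<2$.} Here $p_\infty=0$. Take $h:=\delta/(2+L(\beta_\infty^2+\xi))$. Both margins are then strictly positive, and we may set $\eta_1:=h/2$ and $\eta_2:=\delta/4$ for $s$ large. Condition \eqref{energy-positive-condition} follows from $p\to0<2(\delta-h)$.
\item \emph{Case $r=2$.} This is the main obstacle, because $p\equiv2/\sqrt c$ does not vanish, so the margins must be obtained exactly. We need simultaneously
\[
h>\frac{2}{\sqrt c}
\qquad\text{and}\qquad
\delta-\frac1{\sqrt c}-h\Bigl(1+\frac{L(\beta_\infty^2+\xi)}{2}\Bigr)>0 .
\]
At $h=2/\sqrt c$ and $\xi=0$, the second expression equals $\frac{1}{\sqrt c}\bigl(\delta\sqrt c-3-L\beta_\infty^2\bigr)$, which is positive by \eqref{critical-power-condition}. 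By continuity we can pick $\xi>0$ and then $h=2/\sqrt c+\zeta$ with $\zeta>0$ small so that the second expression is still some $2\eta_2>0$. We then take $\eta_1:=\zeta$. Condition \eqref{energy-positive-condition}, namely $p<2(\delta-h)$, follows from the second inequality, since $\delta-h>1/\sqrt c$.
\end{itemize}
In either case set $\sigma:=\gamma h$.

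\emph{Step 3: conclusion.} With $\eta_1,\eta_2$ fixed, choose $\tau,\omega$ as in \eqref{tau-explicit}. Because $a(s)\to\infty$, enlarging $s_T$ secures \eqref{bounded-a-condition}. Proposition~\ref{boundedness-proposition} then applies, so Theorem~\ref{strong-theorem} gives $x(t)\to x^*$ and $\lambda(t)\to\lambda^\dagger$, and Theorem~\ref{little-o-theorem} gives the $o(\varepsilon(t))=o(t^{-r})$ and $o(\sqrt{\varepsilon(t)})=o(t^{-r/2})$ rates.
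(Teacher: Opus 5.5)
Your proposal is correct and follows the paper's proof essentially step for step. You compute $p(s(t))=\frac{r}{\sqrt c}t^{r/2-1}$ and $p'(s(t))=\frac{r(r-2)}{2c}t^{r-2}$, take $\sigma=\gamma h$ with $h$ small when $0<r<2$ and $\sigma/\gamma$ slightly above $2/\sqrt c$ when $r=2$ (the critical condition is exactly what makes the second expression in \eqref{bounded-conditions} positive there), and then invoke Proposition~\ref{boundedness-proposition} and Theorems~\ref{strong-theorem} and~\ref{little-o-theorem}. Your explicit $\xi$-margin for $\limsup\beta^2$ and your separate checks of \eqref{energy-positive-condition} and $s_\infty=\infty$ only spell out what the paper handles through lower limits.
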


\begin{proof}
	A direct calculation gives \(s(t)\to\infty\) and
	\begin{equation*}\label{power-p}
		p(s(t))=\frac{r}{\sqrt c}\,t^{r/2-1},
		\qquad
		p'(s(t))=-\frac{r(2-r)}{2c}\,t^{r-2}.
	\end{equation*}
	If \(0<r<2\), then \(p(s)\to0\) and
	\((-p'(s))_+\to0\).  Choose
	\[
	0<h<\frac{\delta}{1+L\beta_\infty^2/2},
	\qquad \sigma:=\gamma h.
	\]
	Then
	\[
	\frac{\sigma}{\gamma}-p(s)\to h>0,
	\]
	\[
	\liminf_{s\to\infty}
	\left[
	\delta-\frac{\sigma}{\gamma}-\frac{p(s)}2
	-\frac{\sigma L}{2\gamma}\beta(t(s))^2
	\right]
	\ge
	\delta-h\left(1+\frac L2\beta_\infty^2\right)>0.
	\]
	Moreover,
	\[
	\frac{\sigma}{4\gamma}(-p'(s))_+
	=
	\frac{h}{4}(-p'(s))_+
	\to0,
	\qquad
	\frac{\sigma/\gamma+p(s)}8
	=
	\frac{h+p(s)}8
	\to\frac h8>0.
	\]
	Hence, the third condition in
	\eqref{bounded-conditions} also holds for all sufficiently large \(s\). Thus, there exist \(\eta_1,\eta_2>0\) such that
	\eqref{bounded-conditions} holds for all sufficiently large \(s\).
	Since \(a(s)\to\infty\), the remaining requirements of
	Proposition~\ref{boundedness-proposition} are satisfied after
	increasing \(s_T\) if necessary. Therefore,
	Theorems~\ref{fast-theorem}, \ref{strong-theorem}, and
	\ref{little-o-theorem} apply.

	If \(r=2\), then \(p(s)=2/\sqrt c\) and \(p'(s)=0\).  Under
	\eqref{critical-power-condition}, choose
	\[
	0<h<
	\frac{
		\delta-3/\sqrt c-(L/\sqrt c)\beta_\infty^2
	}{1+L\beta_\infty^2/2},
	\qquad
	\sigma:=\gamma\left(\frac2{\sqrt c}+h\right).
	\]
	The first pointwise expression equals \(h\), the second has positive
	lower limit
	\[
	\delta-\frac3{\sqrt c}-\frac{L}{\sqrt c}\beta_\infty^2
	-h\left(1+\frac L2\beta_\infty^2\right),
	\]
	and the third condition in	\eqref{bounded-conditions} is immediate from \(p'=0\).  The same three
	theorems therefore apply.  Substituting \(\varepsilon(t)=ct^{-r}\) in
	their conclusions gives the stated results.
\end{proof}

\begin{remark}
	\label{critical-coefficient-remark}
	For \(\varepsilon(t)=c/t^2\), the damping coefficient is
	\(\delta\sqrt c/t\), so \eqref{original-system} is of Nesterov type.
	Corollary~\ref{power-law-corollary} gives, without an eventual
	inside/outside-ball condition,
	\[
	x(t)\to x^*,\quad
	\mathcal{L}(x(t), \lambda^\dagger)-\mathcal{L}(x^*,\lambda^\dagger),\ \Vert Ax(t)-b\Vert,\
	|f(x(t))-f(x^*)|=o(t^{-2}),\qquad
	\Vert\dot x(t)\Vert=o(t^{-1}).
	\]
	Existing minimum-norm strong-convergence results without such a ball
	condition mainly concern constant damping or \(\alpha/t^q\) with
	\(0<q<1\), and hence do not cover the critical damping \(\alpha/t\);
	see \cite{Battahi ASVAO 2025,Csetnek JEE 2026}.
	Moreover, Attouch and L\'aszl\'o \cite{Attouch MMOR 2024} identified the
	corresponding unconstrained strong-convergence problem as open.  The case
	\(A=0\), \(b=0\), and \(\beta=0\) therefore provides a finite-dimensional
	answer, while the full result extends it to linearly constrained
	optimization.
\end{remark}

\subsection{The case
	\texorpdfstring{\(\varepsilon(t)=c(\log t)^q/t^2\)}
	{epsilon(t)=c(log t)^q/t^2}}
\begin{corollary}
	\label{log-relaxed-corollary}
	Assume that $f:\mathbb{R}^n\to\mathbb{R}$ is convex and continuously
	differentiable, with an $L$-Lipschitz continuous gradient and the solution set $S\neq\emptyset$. Set
	\begin{equation}\label{log-relaxed-epsilon}
		\varepsilon(t)=\frac{c(\log t)^q}{t^2},
		\qquad c>0,\qquad q>0,\qquad t_0>e^{q/2}.
	\end{equation}
Then \(\sigma>0\) can be chosen such that the following holds.
For every trajectory \((x,\lambda)\) of
\eqref{original-system} corresponding to this choice of \(\sigma\),
let \(\lambda^\dagger\) be the compatible KKT multiplier selected
in Lemma~\ref{KKT-lemma}, so that $\lambda(t_0)-\lambda^\dagger\in\operatorname{Ran}A.$
Then, as \(t\to\infty\),
	\begin{align}
		x(t)&\longrightarrow x^*,
		&\lambda(t)&\longrightarrow\lambda^\dagger,\nonumber\\
		\mathcal{L}(x(t), \lambda^\dagger)-\mathcal{L}(x^*,\lambda^\dagger)
		&=o\left(\frac{(\log t)^q}{t^2}\right),
		&\Vert Ax(t)-b\Vert
		&=o\left(\frac{(\log t)^q}{t^2}\right),\nonumber\\
		\left|f(x(t))-f(x^*)\right|
		&=o\left(\frac{(\log t)^q}{t^2}\right),
		&\Vert\dot x(t)\Vert
		&=o\left(\frac{(\log t)^{q/2}}{t}\right).\nonumber
	\end{align}
\end{corollary}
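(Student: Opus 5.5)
The plan is to follow the subcritical branch ($0<r<2$) of the proof of Corollary~\ref{power-law-corollary} exactly. The key observation is that the logarithmic factor makes $p(s)\to0$ and $p'(s)\to0$. After that, Theorems~\ref{fast-theorem}, \ref{strong-theorem} and \ref{little-o-theorem} apply verbatim, and substituting $\varepsilon(t)=c(\log t)^q/t^2$ into their conclusions gives the stated rates.

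\emph{Step 1: standing hypotheses on $\varepsilon$.} Clearly $\varepsilon\in C^2([t_0,\infty);(0,\infty))$ and $\varepsilon(t)\to0$. A direct computation gives
\[
\dot\varepsilon(t)=\frac{c(\log t)^{q-1}}{t^3}\bigl(q-2\log t\bigr).
\]
This is negative because $t\ge t_0>e^{q/2}$, so $\varepsilon$ is nonincreasing. This is exactly where the restriction on $t_0$ is used.

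\emph{Step 2: computing $p$ and $p'$.} Use the identity $p(s(t))=2\frac{d}{dt}\bigl(\varepsilon(t)^{-1/2}\bigr)$ from the proof of Proposition~\ref{boundedness-proposition}, with $\varepsilon^{-1/2}=t/(\sqrt c(\log t)^{q/2})$. This yields
\[
p(s(t))=\frac{2}{\sqrt c\,(\log t)^{q/2}}\Bigl(1-\frac{q}{2\log t}\Bigr).
\]
So $p>0$ on $[t_0,\infty)$ and $p\to0$. In particular $p$ is bounded, hence $1/\sqrt{\varepsilon}$ grows at most linearly and $s_\infty=\infty$. Next, by the chain rule, $p'(s)=\frac{d}{dt}p(s(t))/\sqrt{\varepsilon(t)}$. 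Differentiating the display, the derivative is of order $(\log t)^{-q/2-1}/t$, and dividing by $\sqrt{\varepsilon}=\sqrt c(\log t)^{q/2}/t$ gives
\[
p'(s(t))=O\bigl((\log t)^{-q-1}\bigr)\to0 .
\]
Hence $(-p'(s))_+\to0$. This step is the only real computation, and it is where I expect the main (mild) obstacle: one must check that the $1/t$ factors cancel exactly, so that $p'$ indeed vanishes rather than merely staying bounded. Boundedness alone would not directly give the third condition in \eqref{bounded-conditions}.

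\emph{Step 3: choosing parameters and concluding.} Pick $0<h<\delta/(1+L\beta_\infty^2/2)$ and set $\sigma:=\gamma h$. Then, as $s\to\infty$:
\begin{itemize}
\item $\sigma/\gamma-p(s)\to h>0$;
\item $\liminf_{s\to\infty}\bigl[\delta-\sigma/\gamma-p(s)/2-\frac{\sigma L}{2\gamma}\beta(t(s))^2\bigr]\ge\delta-h(1+L\beta_\infty^2/2)>0$;
\item $\frac{h}{4}(-p'(s))_+\to0<h/8\le(\sigma/\gamma+p(s))/8$.
\end{itemize}
So \eqref{bounded-conditions} holds on some $[s_T,\infty)$ with suitable $\eta_1,\eta_2>0$, and \eqref{energy-positive-condition} follows from the second item. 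Since $a(s)\to\infty$, increasing $s_T$ secures \eqref{bounded-a-condition}. With $\tau,\omega$ chosen as in \eqref{tau-explicit}, Proposition~\ref{boundedness-proposition} and then Theorems~\ref{fast-theorem}, \ref{strong-theorem} and \ref{little-o-theorem} apply. They give $x(t)\to x^*$, $\lambda(t)\to\lambda^\dagger$, $o(\varepsilon(t))$ rates for the gap, feasibility and objective residual, and $o(\sqrt{\varepsilon(t)})$ for the velocity. These are exactly $o\bigl((\log t)^q/t^2\bigr)$ and $o\bigl((\log t)^{q/2}/t\bigr)$.
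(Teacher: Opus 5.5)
Your proposal is correct and follows the paper's proof essentially step for step. The steps match: monotonicity of $\varepsilon$ from $t_0>e^{q/2}$, then $p(s)\to0$ and $p'(s)=O\bigl((\log t)^{-q-1}\bigr)\to0$, then the choice $\sigma=\gamma h$ with $0<h<\delta/(1+L\beta_\infty^2/2)$, then verification of the three conditions and an appeal to Theorems~\ref{fast-theorem}, \ref{strong-theorem} and \ref{little-o-theorem}. The only cosmetic difference is that you obtain $p$ and $s_\infty=\infty$ from the identity $p(s(t))=2\frac{d}{dt}\varepsilon(t)^{-1/2}$, whereas the paper writes $s(t)$ explicitly; your formula for $p$ coincides with the paper's.
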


\begin{proof}
	The assumption on \(t_0\) makes \(\varepsilon\) decreasing.  Moreover,
	\[
	s(t)=\frac{\sqrt c}{1+q/2}
	\left[(\log t)^{1+q/2}-(\log t_0)^{1+q/2}\right]
	\longrightarrow\infty,
	\]
	and a direct calculation gives
	\begin{equation*}\label{log-relaxed-p}
		\begin{aligned}
			p(s(t))
			&=\frac1{\sqrt c}
			\left(2-\frac q{\log t}\right)(\log t)^{-q/2}
			\longrightarrow0,\\
			p'(s(t))
			&=\frac q{c}
			\left[-(\log t)^{-q-1}
			+\left(\frac q2+1\right)(\log t)^{-q-2}\right]
			\longrightarrow0.
		\end{aligned}
	\end{equation*}
	Choose
	\[
	0<h<
	\frac{\delta}{1+L\beta_\infty^2/2},
	\qquad
	\sigma:=\gamma h.
	\]
	Then
	\[
	\frac{\sigma}{\gamma}-p(s)\to h>0,
	\]
	and
	\[
	\liminf_{s\to\infty}
	\left[
	\delta-\frac{\sigma}{\gamma}-\frac{p(s)}2
	-\frac{\sigma L}{2\gamma}\beta(t(s))^2
	\right]
	\geq
	\delta-h\left(1+\frac L2\beta_\infty^2\right)>0.
	\]
	Moreover,
	\[
	\frac{\sigma}{4\gamma}(-p'(s))_+\to0,
	\qquad
	\frac{\sigma/\gamma+p(s)}8\to\frac h8>0.
	\]
	Hence, all three conditions in
	\eqref{bounded-conditions} hold for all sufficiently large \(s\).
	Theorems~\ref{fast-theorem}, \ref{strong-theorem}, and
	\ref{little-o-theorem} therefore apply, and substitution of
	\eqref{log-relaxed-epsilon} gives the conclusions.
\end{proof}

\begin{remark}[Rate--damping trade-off]
	The critical choice \(\varepsilon(t)=c/t^2\) gives the sharper rates
	\(o(t^{-2})\) and \(o(t^{-1})\), but requires
	\(\delta\sqrt c>3+L\beta_\infty^2\).  The choice
	\(c(\log t)^q/t^2\) admits every \(\delta>0\), at the price of the factors
	\((\log t)^q\) and \((\log t)^{q/2}\) in the corresponding rates.  Also,
	since
	\[
	\frac{d}{dt}\frac1{\sqrt{\varepsilon(t)}}
	=\frac12p(s(t)),
	\]
	eventual boundedness of \(p\) implies
	\(\varepsilon(t)\ge(C+Mt/2)^{-2}\) for suitable \(C,M>0\).  Consequently, the first pointwise condition
	\[
	\frac{\sigma}{\gamma}-p(s)\ge\eta_1>0
	\]
	imposed in Theorems~\ref{fast-theorem} and~\ref{strong-theorem}
	precludes \(\varepsilon(t)=o(t^{-2})\).  Hence \(t^{-2}\) is the
	fastest decay order of the Tikhonov coefficient allowed by the
	hypotheses of the present convergence theory.
\end{remark}

\section{Conclusion}
\label{conclusion-section}

A Tikhonov-regularized mixed-order primal--dual dynamical system with
implicit Hessian damping was introduced for linearly constrained convex
optimization. A nonlinear change of time revealed a dissipative
structure suitable for Lyapunov analysis. Under a single set of
parameter conditions, the primal trajectory was shown to converge
strongly to the minimum-norm solution, without an eventual
inside/outside-ball condition, while the multiplier converges to a
compatible KKT multiplier. Under the same conditions, the Lagrangian
gap, objective residual, and feasibility violation decay as
\(o(\varepsilon(t))\), and the velocity satisfies
\(o(\sqrt{\varepsilon(t)})\). For the critical coefficient
\(\varepsilon(t)=c/t^2\), these estimates become \(o(t^{-2})\) and
\(o(t^{-1})\), respectively. The numerical illustration further
demonstrated the oscillation-attenuation effect of the implicit Hessian
term under different curvature levels. Future work may address structure-preserving discretizations and
extensions to nonsmooth or composite linearly constrained problems.
	
	%%=======================================================================
	
	%	
\end{document}